\newcommand{\MpU}{(\ol{U},U^{\infty})}
\newcommand{\A}{\mathbf{A}}
\renewcommand{\AA}{\mathbb{A}}
\newcommand{\mbA}{\mathbb{A}^1}
\newcommand{\mbAm}{\mathbb{A}^m}
\newcommand{\deAm}{(\mathbb{A}^{1},d_{{1}}\{0\}) \otimes (\mathbb{A}^{1},d_{{2}}\{0\}) \otimes \cdots \otimes (\mathbb{A}^{1},\emptyset)}
\newcommand{\deAmo}{(\mathbb{A}^{1},d_{{1}}\{0\}) \otimes (\mathbb{A}^{1},d_{{2}}\{0\}) \otimes \cdots \otimes \{0\}}
\renewcommand{\P}{\mathbf{P}}
\newcommand{\N}{\mathbb{N}}
\newcommand{\PP}{\mathbb{P}}
\newcommand{\Z}{\mathbb{Z}}
\newcommand{\sO}{\mathcal{O}}
\newcommand{\Zt}{\Ztr}
\newcommand{\bZ}{\mathbb{Z}}
\newcommand{\fM}{\mathfrak{M}}
\newcommand{\fZ}{\mathfrak{Z}}
\newcommand{\Cor}{\operatorname{\mathbf{Cor}}}
\newcommand{\HI}{\operatorname{\mathbf{HI}}}
\newcommand{\Rec}{{\operatorname{\mathbf{Rec}}}}
\newcommand{\Cone}{\operatorname{Cone}}
\newcommand{\ul}[1]{{\underline{#1}}}
\newcommand{\DM}{\operatorname{\mathbf{DM}}}
\newcommand{\MDM}{\operatorname{\mathbf{MDM}}}
\newcommand{\ulMDM}{\operatorname{\mathbf{\underline{M}DM}}}
\newcommand{\Coker}{\operatorname{Coker}}
\newcommand{\Spec}{\operatorname{Spec}}
\newcommand{\Sm}{\operatorname{\mathbf{Sm}}}
\newcommand{\rec}{{\operatorname{rec}}}
\newcommand{\tr}{{\operatorname{tr}}}
\newcommand{\proper}{{\operatorname{prop}}}
\newcommand{\eff}{{\operatorname{eff}}}
\newcommand{\red}{{\operatorname{red}}}
\newcommand{\Zar}{{\operatorname{Zar}}}
\newcommand{\Nis}{{\operatorname{Nis}}}
\newcommand{\et}{{\operatorname{\acute{e}t}}}
\newcommand{\id}{{\operatorname{Id}}}
\newcommand{\CH}{{\operatorname{CH}}}
\renewcommand{\lim}{\operatornamewithlimits{\varprojlim}}
\newcommand{\colim}{\operatornamewithlimits{\varinjlim}}
\newcommand{\ol}{\overline}
\renewcommand{\phi}{\varphi}
\renewcommand{\epsilon}{\varepsilon}
\newcommand{\CI}{{\operatorname{\mathbf{CI}}}}
\newcommand{\MDMeff}{\operatorname{\mathbf{MDM}^\eff}}
\newcommand{\bcube}{{\ol{\square}}}
\newcommand{\cube}{\square}
\newcommand{\Zpu}{\Z[1/p]}
\newcommand{\Zputr}{\Z[1/p]_{\text{tr}}}
\newcommand{\lpnbcube}{\Zputr(\bcube^{(l/p^{n})})}
\newcommand{\lpnobcube}{\Zputr(\bcube^{(l/p^{n+1})})}
\newcommand{\M}{\mathbf{M}}
\newcommand{\ulM}{\underline{\M}}
\newcommand{\ulMNST}{\operatorname{\mathbf{\underline{M}NST}}}
\newcommand{\ulMCor}{\operatorname{\mathbf{\underline{M}Cor}}}
\newcommand{\ulomega}{\underline{\omega}}
\newcommand{\MpMnZ}{\ol{M},\Minf + n\ol{Z}}
\def\bZ{\mathbb{Z}}
\def\Ztr{\bZ_\tr}
\def\isom{\overset{\cong}{\longrightarrow}}
\def\Minf{M^{\infty}}
\def\Zinf{Z^\infty}
\newcommand{\MpM}{(\ol{M},\Minf)}
\newcommand{\MpZ}{(\ol{Z},\Zinf)}
\newcounter{spec}
{\end{list}}%
\newtheorem{lemma}{Lemma}[section]
\newtheorem{thm}[lemma]{Theorem}
\newtheorem{prop}[lemma]{Proposition}
\newtheorem{cor}[lemma]{Corollary}
\newtheorem{expe}[lemma]{Expectation}
\theoremstyle{definition}
\newtheorem{situ}[lemma]{Situation}
\newtheorem{defn}[lemma]{Definition}
\theoremstyle{remark}
\newtheorem{remark}[lemma]{Remark}
\numberwithin{equation}{section}
\newcommand{\udo}{\underline{\omega}}
\newtheorem{notation}[lemma]{Notation}
\newcommand{\mf}{\mathfrak}
\begin{document}

\title[Gysin triangles in the category of motifs with modulus]{Gysin triangles in the category of motifs with modulus}
\author{Keiho Matsumoto}

\maketitle


\begin{abstract}

In this paper, we study a Gysin triangle in the category of motives with modulus (Theorem \ref{GYSIN}). We can understand this Gysin triangle as a motivic lift of the Gysin triangle of log-crystalline cohomology due to Nakkajima and Shiho. After that we compare motives with modulus and Voevodsky motives (Corollary \ref{pinvert}). 
The corollary implies that an object in $\MDMeff$ decomposes into a $p$-torsion part and a Voevodsky motive part.
We can understand the corollary as a motivic analogue of the relationship between rigid cohomology and log-crystalline cohomology.
\end{abstract}

\section{Introduction}
The Gysin triangle (see \cite[Prop.3.5.4]{V00b}) in Voevodsky's category of motives $\DM^{\eff}$ is a remarkable result which is a motivic analogue of the purity theorem of \'etale cohomology \cite[3, XVI, Thm.3.7]{SGA4}. In this paper we shall prove a generalization of Voevodsky's theorem in the setting of motives of modulus pairs. 
Our theorem is an analogue of the Gysin triangle of \mbox{(log-)crystalline} cohomology (see \cite[(2.18.8.2)]{NS08}). As a corollary we give a remarkable equivalence which claims that the essential parts of a motive with modulus are the $p$-torsion part and the Voevodsky part. Our proof uses 
the smooth blow up formula in $\ulMDM^\eff$ (see \cite{KS19}) and a new weighted smooth blow up formula (see Section~\ref{sec:blowupFormula}).

To formulate his Gysin triangle, Voevodsky uses a smooth variety and a smooth closed subvariety. To formulate our Gysin triangle in $\ulMDM^\eff$ we replace the smooth variety by a modulus pair with smooth total space and a modulus whose support is a strict normal crossing divisor, and replace the closed subvariety by a prime smooth Cartier divisor which intersects the modulus properly.

\begin{situ} \label{situ}
Let $\ol{M}$ be a smooth scheme over a field, $M^\infty \subset \ol{M}$ an effective Cartier divisor, $\ol{Z} \subset \ol{M}$ a smooth integral closed subscheme not contained in $M^\infty$ such that the support $|M^{\infty} + \ol{Z}|$ is a strict normal crossings divisor on $\ol{M}$. Write $Z^\infty$ for the intersection product of $M^\infty$ and $\ol{Z}$.
\end{situ}

Our main goal is the following two theorem.

\begin{thm}(Tame Gysin triangle) \label{GYSIN}
In the notation of Situation~\ref{situ}, %
there exsit a distinguish triangle  
\[
\ulM(\ol{M},M^\infty + \ol{Z}) \to \ulM(\ol{M},M^\infty) \to \ulM(\ol{Z},Z^\infty)(1)[2] \to \ulM(\ol{M},M^\infty + \ol{Z})[1],
\]
in $\ulMDM^\eff$.
\end{thm}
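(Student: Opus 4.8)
The plan is to reduce the Tame Gysin triangle to the smooth blow-up formula in $\ulMDM^\eff$ (from \cite{KS19}) together with the new weighted smooth blow-up formula of Section~\ref{sec:blowupFormula}, by deformation to the normal cone. First I would form the blow-up $\pi\colon \Bl_{\ol Z}\ol M \to \ol M$ of the smooth center $\ol Z$ inside $\ol M$, let $E \subset \Bl_{\ol Z}\ol M$ be the exceptional divisor (a $\PP^1$-bundle over $\ol Z$ since $\ol Z$ is a smooth Cartier divisor—indeed $E \cong \PP(N_{\ol Z}\ol M \oplus \sO)$ with $N_{\ol Z}\ol M$ a line bundle), and carefully track the modulus. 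The modulus $M^\infty + \ol Z$ on $\ol M$ pulls back, and because $|M^\infty + \ol Z|$ is a strict normal crossings divisor, the total transform of $M^\infty + \ol Z$ on the blow-up is again snc; one then identifies $\ulM(\ol M, M^\infty + \ol Z)$ with the motive of the modulus pair obtained from $(\Bl_{\ol Z}\ol M, \pi^*M^\infty + \widetilde{Z} + E)$ where $\widetilde Z$ is the strict transform, and similarly relates the pieces to the deformation space $D = \Bl_{\ol Z \times \{0\}}(\ol M \times \mbA)$ equipped with the appropriate modulus coming from $M^\infty \times \mbA$ and the fiber over $0$.

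The core of the argument is a homotopy invariance / deformation-to-the-normal-cone step: one shows that the two closed embeddings $\ol M \hookrightarrow D$ (fiber at $1$) and the normal-bundle model $(\text{Th}(N_{\ol Z}\ol M)\text{-type space}) \hookrightarrow D$ (fiber at $0$) induce the same map on motives with modulus, using that $\mbA^1$ with empty modulus is invertible (or at least that $\ulM(-\times(\mbA^1,\emptyset))$ collapses) in $\ulMDM^\eff$. This is where the weighted blow-up formula enters: the modulus on the special fiber is a nontrivial multiple of the relevant divisors, so the ordinary smooth blow-up formula does not directly apply, and one needs the weighted version to contract $\ulM$ of the blown-up special fiber onto $\ulM(\ol Z, Z^\infty)(1)[2]$ (the Thom space of the normal line bundle, which by projective bundle plus blow-up formulas in $\ulMDM^\eff$ gives the Tate twist and shift). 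Combining the blow-up formula for $\pi$ on $(\ol M, M^\infty + \ol Z)$—which expresses $\ulM(\ol M, M^\infty)$ as a cone built from $\ulM$ of the blow-up and $\ulM$ of the exceptional $\PP^1$-bundle over $(\ol Z, Z^\infty)$—with the identification of that $\PP^1$-bundle's contribution, the distinguished triangle falls out by an octahedral-axiom manipulation.

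The main obstacle I anticipate is bookkeeping the modulus divisors through the blow-up and the deformation space while staying inside $\ulMCor$ / $\ulMDM^\eff$: one must check at each stage that the relevant maps are admissible (the modulus inequalities are satisfied), that "minimal" vs. "effective" Cartier divisor choices are handled so that $Z^\infty$ really is the intersection product $M^\infty \cdot \ol Z$, and that the snc hypothesis is preserved so that the weighted blow-up formula is applicable. A secondary subtlety is verifying that $\ulM(\ol M, M^\infty + \ol Z) \to \ulM(\ol M, M^\infty)$ (induced by the identity on $\ol M$, which is admissible since $M^\infty + \ol Z \ge M^\infty$) is precisely the first map in the triangle produced by the blow-up computation, i.e. matching up the abstractly constructed cone with the geometrically natural map; this is a compatibility check rather than a new idea, but it is the kind of step where sign and orientation conventions for the Thom twist must be pinned down.
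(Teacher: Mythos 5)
Your overall strategy does match the paper's: deformation to the normal cone, the weighted blow-up formula of Section~\ref{sec:blowupFormula} (Theorem~\ref{Blowupup}) because the modulus carries multiplicities, and the identification of the resulting Thom space with $\ulM(\ol{Z},Z^\infty)(1)[2]$ via the projective bundle formula. But the central step of your argument rests on something that is not available: you deduce the comparison between the fibre at $1$ and the normal-bundle fibre of the deformation space from the claim that $(\AA^1,\emptyset)$ is invertible/contractible, i.e.\ that $\ulM(-\otimes(\AA^1,\emptyset))$ collapses in $\ulMDM^\eff$. The localization defining $\ulMDM^\eff$ only imposes $\bcube=(\PP^1,\{\infty\})$-invariance, and $(\AA^1,\emptyset)$ is \emph{not} contractible there: cube-invariant sheaves of $\mathbb{G}_a$- or $\Omega^1$-type take the value $k[t]$ (resp.\ $k[t]\,dt$) on $(\AA^1,\emptyset)$ but $k$ (resp.\ $0$) on the point, so mapping out of $\ulM(\AA^1,\emptyset)$ distinguishes it from $\ulM(\Spec k)$; failure of $\AA^1$-invariance is precisely what the modulus formalism is designed to retain. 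For this reason the paper runs the deformation over $\bcube$ itself (blow up $\ol{M}\times\PP^1$ along $\ol{Z}\times\{0\}$, with $\ol{M}\times\{\infty\}$ in the modulus), so that only cube-invariance is used, and it does \emph{not} obtain purity from a homotopy between the $0$- and $1$-fibres. Instead the Gysin map $\beta$ is defined via the $1$-section into the deformation space, and proving it is an isomorphism is a separate, substantial argument: Zariski Mayer--Vietoris (Lemma~\ref{locloc}), the \'etale excision results of Section~\ref{sec:excision} (Proposition~\ref{Fetexcion}, Theorem~\ref{PEPEPE}, Corollary~\ref{PEPP}, packaged as Lemma~\ref{NOREFCOR}), the local-structure Lemma~\ref{decompose}, and the \'etale-neighbourhood comparison via \cite[Theorem 4.10]{SV00} reducing to the base case $\{0\}\subset(\AA^1,\emptyset)$ tensored with $(\ol{Z},Z^\infty)$, which is then checked directly (Lemma~\ref{compati}(0),(2)). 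None of this reduction machinery, which is where the real work lies, has a substitute in your proposal.

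A secondary confusion: since $\ol{Z}\subset\ol{M}$ is a Cartier divisor, the blow-up $\Bl_{\ol{Z}}\ol{M}\to\ol{M}$ you start with is an isomorphism, and its ``exceptional divisor'' is just $\ol{Z}$; the identification with $\PP(N_{\ol{Z}}\ol{M}\oplus\sO)$ holds for the exceptional divisor of the deformation space $\Bl_{\ol{Z}\times\{0\}}(\ol{M}\times\PP^1)$, not for $\Bl_{\ol{Z}}\ol{M}$. Consequently the first reduction you describe, identifying $\ulM(\ol{M},M^\infty+\ol{Z})$ with the motive of $(\Bl_{\ol{Z}}\ol{M},\pi^*M^\infty+\widetilde{Z}+E)$, is vacuous, and the ``blow-up formula for $\pi$ on $(\ol{M},M^\infty+\ol{Z})$'' you invoke has no content in codimension one; the only blow-up formulas actually used are those for the deformation space (the formula of \cite{KS19} and the weighted version). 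Your reliance on the projective bundle formula for the twist and shift is consistent with the paper, which also cites it as forthcoming work of Kahn--Miyazaki--Saito--Yamazaki.
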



Theorem~\ref{GYSIN} leads to the following.

\begin{cor}[Theorem \ref{redcase}]\label{1.3}
Let $X$ be a smooth variety over $k$ which has a compactification $\ol{X}$ such that $\ol{X}$ is smooth and $|\ol{X}\backslash X|$ is a strict normal crossings divisor on $\ol{X}$, then the unit
\[
\ulM(\ol{X},|\ol{X}\backslash X|_{\red}) \to \ul{\omega}^{\eff}({\bf{M}}(X))
\]
of the adjunction $\ul{\omega}_{\eff}:\ulMDM^\eff \rightleftarrows\DM^{\eff}:\ul{\omega}^\eff$ is an isomorphism.
\end{cor}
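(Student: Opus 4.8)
The plan is to induct on the number $r$ of irreducible components of $D := |\ol{X}\setminus X|$, using the Tame Gysin triangle (Theorem~\ref{GYSIN}) to strip off one component at a time and comparing with the analogous Gysin sequence on the Voevodsky side. When $r=0$ we have $X=\ol X$ proper and smooth, and $\ulM(\ol X,\emptyset)\to \ul\omega^\eff(\mathbf M(X))$ is an isomorphism because $\ul\omega_\eff$ sends $\ulM(\ol X,\emptyset)$ to $\mathbf M(\ol X)=\mathbf M(X)$ and the unit is computed by the standard compatibility of the adjunction $\ul\omega_\eff\dashv\ul\omega^\eff$ with the embedding of proper modulus pairs; equivalently one invokes that on proper smooth $\ol X$ the relevant $\underline{\mathrm{Hom}}$'s agree. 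For the inductive step, write $D = D' \cup \ol Z$ where $\ol Z$ is one chosen smooth component (a prime smooth Cartier divisor meeting the strict normal crossings divisor $D'$ properly, so Situation~\ref{situ} applies with $\ol M = \ol X$, $M^\infty = |D'|_{\red}$), and set $X' = \ol X\setminus |D'|$, which is smooth with compactification $\ol X$ and boundary $D'$ having $r-1$ components, and $X = X'\setminus \ol Z$.

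Next I would lay the Gysin triangle of Theorem~\ref{GYSIN} for the pair $(\ol X, |D'|_{\red}, \ol Z)$ alongside Voevodsky's Gysin triangle (\cite[Prop.3.5.4]{V00b}) for the closed immersion $Z := \ol Z\cap X' \hookrightarrow X'$ with open complement $X = X'\setminus Z$, apply $\ul\omega^\eff$ to the latter (it is a triangulated functor, hence preserves distinguished triangles), and build a ladder of the form
\[
\begin{array}{ccccccc}
\ulM(\ol X, |D|_{\red}) & \to & \ulM(\ol X, |D'|_{\red}) & \to & \ulM(\ol Z, Z^\infty)(1)[2] & \to & \\
\downarrow & & \downarrow & & \downarrow & & \\
\ul\omega^\eff\mathbf M(X) & \to & \ul\omega^\eff\mathbf M(X') & \to & \ul\omega^\eff\mathbf M(Z)(1)[2] & \to &
\end{array}
\]
where the left vertical is the unit we want to show is an isomorphism, the middle vertical is an isomorphism by the inductive hypothesis applied to $(X',\ol X)$, and the right vertical is the unit for $(Z,\ol Z)$ — here $Z = \ol Z\setminus (\ol Z\cap |D'|)$ is smooth with smooth compactification $\ol Z$, and $\ol Z\cap|D'|$ is strict normal crossings on $\ol Z$ (this is where properness of the intersection and the snc hypothesis are used), while $Z^\infty = |(\ol Z\cap|D'|)|_{\red}$ matches $|\ol Z\setminus Z|_{\red}$; so the right vertical is an isomorphism again by the inductive hypothesis, applied to $Z\subset\ol Z$ whose boundary has at most $r-1$ components. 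Granting that the square commutes, the five lemma (in the triangulated form: two out of three vertical maps isomorphisms, plus the shifted comparison) forces the left vertical to be an isomorphism, completing the induction.

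The main obstacle is the commutativity of that ladder — i.e. checking that the unit maps of the adjunction are compatible with the Gysin connecting morphisms on the two sides. This requires knowing that $\ul\omega_\eff$ (equivalently $\ul\omega^\eff$) carries the modulus Gysin triangle of Theorem~\ref{GYSIN} to the Voevodsky Gysin triangle, which in turn amounts to tracing through the construction of Theorem~\ref{GYSIN} (the smooth and weighted blow-up formulas of \cite{KS19} and Section~\ref{sec:blowupFormula}, and the deformation to the normal cone) and verifying that $\ul\omega_\eff$ is compatible with each building block — blow-ups, Thom spaces, and the $\mathbf A^1$/$\bcube$-homotopies — since $\ul\omega_\eff$ is by construction compatible with the relevant geometric inputs (it sends $\ulM(\ol Y,\emptyset)$ to $\mathbf M(Y)$ and commutes with Tate twists and the blow-up squares). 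A clean way to organize this is to observe that both Gysin triangles are obtained by the \emph{same} recipe from deformation to the normal cone and the projective/blow-up bundle formulas, and that $\ul\omega_\eff$ intertwines these recipes; once the two outer units and the naturality are in place, no further computation is needed. One should also double-check the base case and the bookkeeping of reduced structures ($|D|_{\red}$ versus $M^\infty + \ol Z$ with $M^\infty = |D'|_{\red}$): since $\ol Z\not\subset |D'|$ and $D$ is snc, $M^\infty + \ol Z$ and $|D|_{\red}$ have the same support, and for the purposes of $\ulMDM^\eff$ this suffices, but it is worth stating explicitly.
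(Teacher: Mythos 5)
Your proposal is correct and follows essentially the same route as the paper: induction that strips one smooth boundary component at a time, a ladder comparing the tame Gysin triangle of Theorem~\ref{GYSIN} with $\ul{\omega}^{\eff}$ applied to Voevodsky's Gysin triangle (the ladder commutes because the tame Gysin map is defined purely by geometric morphisms, which is exactly Lemma~\ref{comp} plus naturality of the unit), and then the triangulated five lemma. Two small caveats: the base case --- that the unit $\ulM(\ol{Y},\emptyset)\to\ul{\omega}^{\eff}\mathbf{M}(Y)$ is an isomorphism for $Y$ smooth and proper --- is not formal adjunction bookkeeping but an external input (the paper cites it as a forthcoming result of Kahn--Miyazaki--Saito--Yamazaki), and the induction should be run on the number of smooth divisors in a chosen decomposition of the boundary rather than on irreducible components, since $\ol{Z}\cap|D'|$ may have more than $r-1$ irreducible components; what matters (and what the paper uses) is that each $|V_n\cap V_i|_{\red}$ is a single smooth divisor on $V_n$, and likewise note that one needs the equality of divisors $|D'|_{\red}+\ol{Z}=|D|_{\red}$, not merely equality of supports, since $\ulMDM^{\eff}$ is sensitive to multiplicities.
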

Moreover, as an application of this corollary %
we get the following equivalence, which philosophically has been expected since the beginning of the theory of motives with modulus. 

\begin{cor}[{Corollary~\ref{Acharap}}]
If the base field $k$ has characteristic $p \geq 2$, for any modulus pair $\MpM$  such that $\ol{M}$ is smooth and $M^\infty_\red$ is strict normal crossing, then there is an isomorphism in $\ulMDM^\eff(k,\Zpu)$
\[
{\ulM\MpM}_{\Zpu} \simeq {\ulM(\ol{M},M^\infty_\red)}_{\Zpu}.
\]
\end{cor}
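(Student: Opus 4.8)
The plan is to deduce the $\Zpu$-coefficient isomorphism from Corollary~\ref{1.3} together with the tame Gysin triangle of Theorem~\ref{GYSIN}, by an induction that successively reduces the multiplicities of the components of $M^\infty$. First I would fix a modulus pair $\MpM$ with $\ol M$ smooth and $|M^\infty|$ strict normal crossing, write $M^\infty = \sum_{i} m_i D_i$ with the $D_i$ the irreducible components, and set $n = \sum_i (m_i - 1) \ge 0$ to be the quantity measured by the induction. When $n = 0$ we have $M^\infty = M^\infty_\red$ and there is nothing to prove. For the inductive step, pick a component $D_j$ with $m_j \ge 2$ and compare $\MpM$ with the pair $(\ol M, M^\infty - D_j)$, whose invariant is $n-1$.

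The key point is that the two pairs $(\ol M, M^\infty)$ and $(\ol M, M^\infty - D_j)$ fit into a tame Gysin triangle. Indeed, apply Theorem~\ref{GYSIN} (in the form of Situation~\ref{situ}) with ambient modulus pair $(\ol M, M^\infty - D_j)$ and the smooth prime divisor $\ol Z = D_j$: since $|M^\infty|$ is strict normal crossing, $D_j$ is smooth, is not contained in $|M^\infty - D_j|$, and $|(M^\infty - D_j) + D_j| = |M^\infty|$ is strict normal crossing, so the hypotheses hold. The resulting distinguished triangle reads
\[
\ulM(\ol M, M^\infty) \to \ulM(\ol M, M^\infty - D_j) \to \ulM(D_j, Z^\infty)(1)[2] \to \ulM(\ol M, M^\infty)[1],
\]
where $Z^\infty$ is the intersection product of $M^\infty - D_j$ with $D_j$. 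Now I would base-change this triangle to $\Zpu$-coefficients and argue that the third term $\ulM(D_j, Z^\infty)(1)[2]$ becomes zero in $\ulMDM^\eff(k,\Zpu)$. For this it suffices to show $\ulM(D_j, Z^\infty)_{\Zpu} \simeq \ulM(D_j, Z^\infty_\red)_{\Zpu}$ — this is the inductive hypothesis applied to the pair $(D_j, Z^\infty)$ — and then to use the fact that, after inverting $p$, the Tate twist of a modulus motive attached to a reduced strict-normal-crossing boundary already lies in the Voevodsky part; more precisely, combine Corollary~\ref{1.3} with the analogous statement for $(D_j, Z^\infty_\red)$ and the Gysin triangle for the open complement to see the relevant twist vanishes, or argue directly that after $\Zpu$-localization $\ulM(D_j,Z^\infty)(1)[2]$ is a retract of something proved to vanish. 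Granting this, the Gysin triangle degenerates and yields ${\ulM\MpM}_{\Zpu} \simeq {\ulM(\ol M, M^\infty - D_j)}_{\Zpu}$, and the induction closes, giving ${\ulM\MpM}_{\Zpu} \simeq {\ulM(\ol M, M^\infty_\red)}_{\Zpu}$.

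The main obstacle is the vanishing step: showing that the "defect" term $\ulM(D_j, Z^\infty)(1)[2]$ dies after inverting $p$. Corollary~\ref{1.3} controls only the reduced boundary and only after applying $\ulomega^\eff$, so I would need either (a) a lemma saying that for a smooth pair with reduced strict normal crossing boundary, the unit $\ulM(\ol X, \partial) \to \ulomega^\eff \bf M(X)$ together with the Voevodsky Gysin triangle forces the relevant Tate-twisted object to be in the image of $\ulomega^\eff$ and hence to behave as in $\DM^\eff$, or (b) to set up the induction more carefully so that the defect term is itself of the form $\ulM$ of a lower-complexity modulus pair and reduce directly. Option (b) seems cleaner: the content of Corollary~\ref{1.3} is really that inverting $p$ collapses the modulus filtration onto its reduced part, so I expect the honest argument to interleave the Gysin triangle with a compatibility between $\ulomega^\eff$, the Gysin triangle of Theorem~\ref{GYSIN}, and Voevodsky's Gysin triangle, checking that the two triangles are identified after applying $\ulomega^\eff$ and inverting $p$; the bookkeeping of twists, shifts, and the intersection multiplicities $Z^\infty$ versus $Z^\infty_\red$ is where the care is needed.
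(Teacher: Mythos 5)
There is a genuine gap, and it occurs at the very first move of your induction. Theorem~\ref{GYSIN} only applies in Situation~\ref{situ}, which requires $\ol Z$ to be \emph{not contained} in the support of the ambient modulus, i.e.\ the Gysin triangle compares $(\ol M, M^\infty)$ with $(\ol M, M^\infty+\ol Z)$ where $\ol Z$ meets $M^\infty$ properly and enters the boundary with multiplicity one (this is why it is called the \emph{tame} Gysin triangle). In your inductive step you take the ambient pair $(\ol M, M^\infty - D_j)$ with $m_j\ge 2$ and $\ol Z=D_j$; but then $D_j$ is still a component of $|M^\infty - D_j|$, the hypothesis fails, and $Z^\infty=(M^\infty-D_j)\cdot D_j$ is not even a proper intersection. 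Increasing the multiplicity of an existing component is a wild phenomenon that the tame Gysin triangle does not govern. Moreover, even granting such a triangle, your vanishing claim is false: $\ulM(D_j,Z^\infty)(1)[2]\otimes\Zpu$ is a Tate twist of an honest modulus motive and does not die after inverting $p$ (its image under $\ul{\omega}_{\eff}$ is a nonzero Voevodsky motive); ``lies in the Voevodsky part after inverting $p$'' does not mean ``is zero''. Note also that your argument never uses characteristic $p$ in any essential way, whereas the statement is genuinely characteristic-$p$: in characteristic zero the difference between $(\ol M, D)$ and $(\ol M, 2D)$ is not torsion.

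The paper's actual proof localizes this difference rather than globalizing it through the Gysin triangle: one changes one multiplicity at a time, uses Mayer--Vietoris and the local chart of Lemma~\ref{decompose} together with \'etale excision (Proposition~\ref{Fetexcion}) to identify the cone of $\ulM(\ol M,M^\infty_i)\to\ulM(\ol M,M^\infty_{i-1})$ with $(V_i,V_i^\infty)\otimes\bigl[\ulM(\AA^1,\{0\})/\ulM(\AA^1,n_i\{0\})\bigr]$, and then invokes Theorem~\ref{charap}, i.e.\ $\ulM\bigl((\PP^1,\{\infty\})/(\PP^1,l\{\infty\})\bigr)\otimes\Zpu=0$, which is proved by the Frobenius--Verschiebung relation $F^{(n)}\circ V^{(n)}=V^{(n)}\circ F^{(n)}=p\cdot\mathrm{id}$ (Lemmas~\ref{vcomp}--\ref{III}). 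That is the ingredient your proposal is missing; Corollary~\ref{1.3} and Theorem~\ref{GYSIN} are not what drives this corollary (rather, this corollary together with Theorem~\ref{redcase} feeds into Theorem~\ref{ppinv}). To repair your argument you would have to replace the tame Gysin step by a ``wild'' comparison of $(\AA^1,\{0\})$ with $(\AA^1,n\{0\})$, which is exactly the content of Theorem~\ref{charap}.
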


\begin{defn}
We define $\MDMeff$ as the smallest full triangulated subcategory of $\ulMDM^\eff$ which contains all of proper modulus pairs and is closed under small coproducts. 
\end{defn}

The category $\MDMeff$ is equivalent to the category in \cite[Definition~3.2.4]{KMSY20} because of \cite[Threorem~3.3.1(2),~Theorem~5.2.2]{KMSY20}.

\begin{thm}[Theorem \ref{ppinv}]\label{pinvert}
If the base field $k$ has characteristic $p \geq 2$, admits log resolution of singularities, and $R$ is commutative ring containing $1/p$ then 
\[
\MDMeff(k,R) \simeq \DM^\eff(k,R).
\]
\end{thm}




Let us discuss the relationship between our results and other work.

\subsection{Relationship to the Gysin triangle for (log-)Crystalline cohomology} 
First lets state the Gysin triangle for crystalline cohomology, and a comparison theorem between rigid cohomology and crystalline cohomology.


\begin{thm}[{\cite[Eq.2.18.8.2]{NS08}, \cite{Shi02}}] \label{Shiho} Let $W$ be the Witt ring of the base field, let $K$ be the fractional field of $W$, and set $S=\Spec W$. Consider the push forward functors $f_{-\slash S}: Sh(-/S)_{\text{crys}}\to Sh_\Zar(S)$ from the (log-)crystalline sites of log schemes over $S$ to the Zariski site of $S$, and the structure sheaves $\mathcal{O}_{-/S}$ on $(-/S)_{\text{crys}}$. In the notation of Situation~\ref{situ}, there is a long exact sequence of Zariski sheaves on $S$:
\begin{align*}
\cdots 
\rightarrow R^{n-2}f_{\ol{Z}/S}%
&(\mathcal{O}_{\ol{Z}\slash W})(-1)
\rightarrow R^{n}f_{\ol{M}/S}(\mathcal{O}_{\ol{M}/S}) \\
&\rightarrow R^{n}f_{(\ol{M},\ol{Z}/S)}(\mathcal{O}_{(\ol{M},\ol{Z})/S})
\rightarrow R^{n-1}f_{\ol{Z}/S}(\mathcal{O}_{\ol{Z}/S})(-1)
\rightarrow\cdots
 \end{align*}
 and we have a natural and functorial isomorphism
\[
comp:  H^i_{\text{crys}}((\ol{M},\ol{Z})\slash W)\otimes_{W}K
\simeq 
H^i_{\text{rig}}(\ol{M}\backslash \ol{Z}\slash K).
 \]
\end{thm}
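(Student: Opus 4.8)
The statement combines two known results, and the plan is to reduce to them while recording the mechanism behind each. For the long exact sequence, I would start from the Poincar\'e residue short exact sequence relating the complexes that compute the three (log-)crystalline cohomologies in play: via the crystalline--de Rham-Witt comparison it suffices to work with the relevant (log-)de Rham-Witt complexes, for which one has a residue sequence of the shape
\[
0 \longrightarrow \omega^\bullet_{\ol{M}/S} \longrightarrow \omega^\bullet_{\ol{M}/S}(\log \ol{Z}) \xrightarrow{\;\Res\;} \omega^\bullet_{\ol{Z}/S}(-1)[-1] \longrightarrow 0,
\]
the Tate twist $(-1)$ reflecting the Frobenius-eigenvalue shift produced by taking residues along the codimension-one subscheme $\ol{Z}$. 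Applying $Rf_{-/S}$ and rotating the resulting distinguished triangle to
\[
Rf_{\ol{Z}/S}(\mathcal O_{\ol{Z}/S})(-1)[-2] \longrightarrow Rf_{\ol{M}/S}(\mathcal O_{\ol{M}/S}) \longrightarrow Rf_{(\ol{M},\ol{Z})/S}(\mathcal O_{(\ol{M},\ol{Z})/S}) \xrightarrow{+1}
\]
and then passing to the long exact sequence of the $R^n f_{-/S}$ as Zariski sheaves on $S$ produces a sequence of exactly the displayed form; it remains to identify the coboundary map $R^{n-1}f_{\ol{Z}/S}(\mathcal O_{\ol{Z}/S})(-1) \to R^{n+1}f_{\ol{M}/S}(\mathcal O_{\ol{M}/S})$ with the crystalline Gysin (pushforward) map, which is precisely \cite[(2.18.8.2)]{NS08}.

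For the comparison isomorphism I would proceed in three steps: (i) invert $p$ to pass from the log-crystalline cohomology of the log-smooth log scheme $(\ol{M},\ol{Z})$ over $W$ to its log-convergent cohomology; (ii) invoke Shiho's comparison theorem \cite{Shi02}, which identifies the log-convergent cohomology of a log-smooth log scheme whose log structure is associated to a normal crossings divisor with the rigid cohomology of the corresponding open complement $\ol{M}\setminus\ol{Z}$ over $K$; (iii) check that both constructions are functorial already at the level of the underlying complexes, so that the resulting isomorphism $comp$ is natural and functorial in $(\ol{M},\ol{Z})$.

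The step I expect to be the main obstacle is (ii): it is the technical heart of the cited work, requiring control of tubular neighbourhoods and of the overconvergence condition so that log-convergent and rigid cohomology may be compared, together with the correct treatment of the log structure throughout. A secondary subtlety, for the long exact sequence, is the careful bookkeeping of the shift, the Tate twist and the sign in the residue sequence needed to see that the coboundary is exactly the Gysin map rather than a nonzero multiple of it. Both points are carried out in \cite{NS08} and \cite{Shi02}, and here one simply appeals to those references.
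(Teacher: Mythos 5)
The paper states Theorem~\ref{Shiho} purely as a recollection, with no proof of its own beyond the citations to \cite{NS08} and \cite{Shi02}, and your proposal likewise terminates in an appeal to exactly those references; the two routes are therefore essentially identical. Your expository scaffolding (the log-de Rham--Witt residue short exact sequence for the long exact sequence, with the correct shift and Tate twist, and the $p$-inversion/log-convergent/rigid comparison for $comp$) is accurate and simply makes explicit the mechanism the paper leaves implicit in the citations.
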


\begin{expe} \label{expe:crysReal}
We expect that there exists an exact ``crystalline realization functor'' $$\mathbb{R}\Gamma_{crys}:\MDMeff(k,W) \to D(W)$$ satisfying %
\[ \mathbb{R}\Gamma_{crys}\bigl(\ulM(\ol{M},\emptyset)\bigr) \simeq \mathbb{R}\Gamma\bigl(S,\mathbb{R}f_{\ol{M}/S}(\mathcal{O}_{\ol{M}/S})\bigr)  \textrm{ and } 
\]
\[
\mathbb{R}\Gamma_{crys}\bigl(\ulM(\ol{M},\ol{Z})\bigr) \simeq \mathbb{R}\Gamma\bigl(S,\mathbb{R}f_{(\ol{M},\ol{Z})/S}(\mathcal{O}_{(\ol{M},\ol{Z})/S})\bigr). \] %
\end{expe}


In this case, the tame Gysin triangle Theorem~\ref{GYSIN} would be a motivic lifting of the first claim of Theorem~\ref{Shiho}. %

Now consider rigid cohomology. Milne-Ramachandran \cite{MR09} construct\footnote{This can be constructed as follows. Since $K$ contains $\mathbb{Q}$, by Ayoub's work \cite[App.B]{A14} there is an equivalence $\DM^\eff(k,K) \cong \textbf{DA}^\eff_{\et}(k,K)$ and so it suffices to construct a functor $\textbf{DA}^\eff_{\et}(k,K) \to D(K)$. Since rigid cohomology satisfies \'etale descent and $\AA^1$-homotopy invariance (see \cite{CT03}), the factorization of Besser’s rigid complex $\mathbb{R}\Gamma_{rig}(-):(\Sm\slash k) \to D(K)$ (see \cite[4.9, 4.13]{B00}) through $\textbf{DA}^\eff_{\et}(k,K)$, is a rigid realization.} a rigid realization 
\[ \mathbb{R}\Gamma_{rig} :\DM^\eff(k,K) \to D(K) \]
satisfying
\[ \mathbb{R}\Gamma_{rig}(M(X)) = \mathbb{R}\Gamma_{rig}(X) \]
for $X$ smooth where the right hand side is Besser’s rigid complex. By Corollary \ref{pinvert} the functor ${\ul{\omega}^\eff}_K:\DM^\eff(k,K)\to \ulMDM^\eff_{\proper}(k,K) $ is an equivalence, with quasi-inverse $\ul{\omega}_{\eff, K}$. Since $\ul{\omega}_{\eff}$ sends $\ulM(\ol{M},M^\infty_\red)$ to ${\bf{M}}(\ol{M} \backslash M^\infty)$, the second claim of Theorem~\ref{Shiho} produces a natural isomorphism of functors 
\[ 
\xymatrix{
\MDMeff(k,K) \ar[dr]\ar@{}@<-0.7ex>[dr]_{\mathbb{R}\Gamma_{crys} \otimes_W K} \ar[rr]^-{\ul{\omega}_{\eff}} & \ar@{}[d]|(0.65)\cong|(0.45){comp} & 
\DM^\eff(k,K)  \ar[dl]^{\mathbb{R}\Gamma_{rig}}\\
& D(K) & \\
}
\]

In this light, if Expectation~\ref{expe:crysReal} holds, then the equivalence $\ul{\omega}^\eff$ of Corollary~\ref{pinvert} will be a motivic lifting of the isomorphism $comp$ of Theorem~\ref{Shiho}.

\begin{remark}
Binda-Park-Østvær constructed in \cite[Section 1.3.2]{BPO} a framework which is analogous to $\ulMDM^{\eff}$ called \emph{log motives}, and they are pursuing the construction of a log crystalline realisation functor in their framework. It would be very interesting to investigate the relationship between log motives and motives with modulus in the future.
\end{remark}

\subsection{Relationship to Miyazaki's works on higher Chow groups with modulus} \label{sec:miya}
In \cite{BS14} Binda-Saito define higher Chow groups with modulus generalizing additive higher Chow groups (see \cite{BE03}). Miyazaki proves that after inverting $p$, higher Chow groups with modulus become independent of the modulus.
\begin{thm}\cite[Theorem 5.1]{Miy19b} If base field has characteristic $p$, then for any modulus pair $(\ol{M},M^\infty)$, we have an isomorphism \[\CH^i(\ol{M}|M^\infty,j,\Zpu) \simeq \CH^i(\ol{M}|M_\red^\infty,j,\Zpu).\]
\end{thm}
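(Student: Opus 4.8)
The inclusion of cubical cycle complexes $z^i(\ol M\,|\,M^\infty,\bullet)\hookrightarrow z^i(\ol M\,|\,M^\infty_\red,\bullet)$ --- a larger modulus being a more restrictive admissibility condition on cycles --- induces the comparison map on higher Chow groups that the theorem asserts to be an isomorphism after inverting $p$. So the claim is equivalent to the acyclicity, after $\otimes_\Z\Z[1/p]$, of the quotient complex $Q^\bullet:=z^i(\ol M\,|\,M^\infty_\red,\bullet)\,/\,z^i(\ol M\,|\,M^\infty,\bullet)$. I would prove this by dévissage on the multiplicities. Writing $M^\infty=\sum_\ell m_\ell D_\ell$ with the $D_\ell$ distinct primes, interpolate between $M^\infty_\red$ and $M^\infty$ by a chain of effective Cartier divisors $M^\infty_\red=M_0\le M_1\le\cdots\le M_N=M^\infty$ in which each $M_{r+1}-M_r$ is a single prime component $D\subset|M^\infty|$ (itself an effective Cartier divisor, being a difference of effective Cartier divisors); then $Q^\bullet$ is a successive extension of the complexes $z^i(\ol M\,|\,M_r,\bullet)\,/\,z^i(\ol M\,|\,M_r+D,\bullet)$, so it suffices to show that for each prime $D\subset|M^\infty|$ and each effective Cartier divisor $E$ with $D\not\subset|E|$ the complex
\[ R^\bullet\ :=\ z^i(\ol M\,|\,E,\ \bullet)\,/\,z^i(\ol M\,|\,E+D,\ \bullet) \]
is acyclic after $\otimes_\Z\Z[1/p]$; equivalently, that the ``relative modulus higher Chow groups'' $H_*(R^\bullet)$ are $p$-power torsion. (In the chain, $E=M_r$ and $E+D=M_{r+1}$, and $D\not\subset|E|$ holds automatically since $M_{r+1}-M_r=D$ is reduced.)

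The content is entirely in this last point, and it is exactly where $\car k=p$ is used: in characteristic zero the dependence of modulus Chow groups on infinitesimal thickenings of the boundary survives rationally (it involves absolute K\"ahler differentials), whereas in characteristic $p$ the passage from $E$ to $E+D$ contributes only terms supported on $D$ whose building blocks are additive groups of $\car p$ rings, hence are annihilated by a power of $p$. To make $H_*(R^\bullet)$ visibly $p$-power torsion I would first reduce, using Zariski Mayer--Vietoris in the total space $\ol M$ for the cubical cycle complexes with modulus, to the case $\ol M=\Spec A$ affine with $D=V(f)$ principal, and then argue in one of two ways. \emph{(a)} Equip $R^\bullet$, or $H_*(R^\bullet)$, with the structure of a module over a truncated big Witt ring $\mathbb{W}_m(\mathcal O_D)$ of a $\car p$ ring --- such a ring is a successive extension of additive groups, so is killed by $p^m$ --- via the Witt-vector and multiplication operations on additive higher Chow cycles in the style of R\"ulling; then $H_*(R^\bullet)\otimes\Z[1/p]=0$ for formal reasons. \emph{(b)} Produce, after inverting $p$, a $p$-divisible structure directly from the $p$-th power map normal to $D$: using the finite flat purely inseparable cover $\phi\colon\Spec\bigl(A[t]/(t^p-f)\bigr)\to\ol M$, pullback and pushforward carry the relative complexes into one another, and both round-trip composites equal multiplication by $p$ --- the less obvious one because the self-fibre-product is the $p$-th infinitesimal neighbourhood of the diagonal, $\Spec\bigl(A'[s]/((s-t)^p)\bigr)=p\cdot\Delta$ --- which supplies the divisibility needed to kill $H_*(R^\bullet)\otimes\Z[1/p]$. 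Feeding the $\Z[1/p]$-acyclicity of each $R^\bullet$ back through the dévissage gives the acyclicity of $Q^\bullet\otimes\Z[1/p]$, hence the theorem.

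I expect the main obstacle to be precisely the analysis of the relative term $H_*(R^\bullet)$: identifying it explicitly enough --- for an arbitrary modulus pair, rather than for $(\AA^1,n\{0\})$ over a field --- to see its $p$-power torsionness is the analogue in this generality of R\"ulling's computation of additive higher Chow groups via de Rham--Witt forms, and making either the Witt-module structure of (a) or the $p$-divisible homotopy of (b) work uniformly in the indices and compatibly with all face and degeneracy maps, together with the moving-lemma bookkeeping needed to keep cycles admissible and in good position throughout, is where the real work lies. The dévissage, the reduction to the affine case, and the bookkeeping around the exact sequences of cycle complexes should all be routine by comparison.
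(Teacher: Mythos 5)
This statement is not proved in the paper at all: it is quoted verbatim from Miyazaki as \cite[Theorem 5.1]{Miy19b} and used only as motivation in Section~\ref{sec:miya}, so there is no in-paper proof to compare against. That said, your route \emph{(b)} is exactly the mechanism the paper deploys for its own motivic analogue of this independence-of-multiplicity statement (Theorem~\ref{charap} and Corollary~\ref{Acharap}): a purely inseparable degree-$p$ cover ($x\mapsto x^p$ there, $t^p=f$ in your version) together with its transpose gives maps $F$ and $V$ with $F\circ V=V\circ F=p\cdot\mathrm{id}$, the nontrivial composite being computed precisely from your observation that the fibre product is the $p$-th infinitesimal neighbourhood of the diagonal, i.e.\ $(x^p-y^p)=(x-y)^p$ in characteristic $p$ (the paper's Lemma~\ref{pullback}); and the d\'evissage over prime components and multiplicities, with Zariski Mayer--Vietoris and \'etale-local charts reducing to the model $(\AA^1,n\{0\})$ versus $(\AA^1,\{0\})$, is the same skeleton as the proof of Corollary~\ref{Acharap}. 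The one point you should not treat as pure bookkeeping is the verification that pullback and pushforward along the inseparable cover actually preserve the modulus/admissibility conditions on cycles with the correct rescaling of multiplicities along $D$ (the analogue of the paper's identification $\ulMCor((\ol{M},p^{n}M^\infty),(\PP^1,l\{\infty\}))=\ulMCor((\ol{M},p^{n+1}M^\infty),(\PP^1,lp\{\infty\}))$); this is where the admissibility estimates genuinely enter, but it is checkable and is how Miyazaki's argument in fact proceeds. Your alternative \emph{(a)} via Witt-module structures is a genuinely different and heavier route (closer to R\"ulling's de Rham--Witt computations) and is not needed.
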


On the other hand, it is expected that Voevodsky's isomorphism \cite[Cor.4.2.9]{V00b}
\begin{equation}
\CH^{n{-}i}(X, j{-}2i) \cong \hom_{\DM^\eff}(\Z(i)[j], M^c_{gm}(X)) 
\end{equation}
can be generalized to a relationship between higher Chow groups with modulus and $\ulMDM^\eff$. If this is the case, then the equivalence of Corollary~\ref{pinvert} can be seen as an analogue of Miyazaki's independence result.

\subsection{Relationship to reciprocity sheaves} \label{sec:BCKS} If the base field $k$ has characteristic $p$, then for a Nisnevich reciprocity sheaf $F$ (see \cite{KSY14}), the kernel of the canonical surjective morphism $F \to \text{H}_0(F)$ must be $p$-primary torsion, (see \cite[Corollary 3.10]{BCKS17}). In fact %
Binda-Cao-Kai-Sugiyama %
prove an equivalence of categories $\Rec_\Nis[\frac{1}{p}] \simeq \HI_\Nis[\frac{1}{p}]$ between the category of reciprocity sheaves and the category of homotopy invariant Nisnevich sheaves with transfers.

On the other hand, there is a tower of fully faithful functors $\HI_\Nis \overset{i_\rec^\Nis}{\hookrightarrow} \Rec_\Nis  \overset{\omega^\CI_\rec}{\hookrightarrow} \CI_\Nis^{sp}$ (see \cite[Thm.3.6.6]{KSY17}, \cite[Cor.3.8.2]{KSY17}, \cite{BS19}). In analogy to the fact that the heart of $\DM^\eff$ is $\HI_\Nis$ (see \cite[Thm.3.1.12]{V00b}), it is expected that the heart of $\MDMeff$ is $\CI_\Nis^{sp}$ (log version of this story is proved by Binda-Merici \cite[Theorem~5.7]{BM}). By definition the composition $\omega^\CI_\rec \circ i_\rec^\Nis$ is compatible with $\omega^\eff:\DM^\eff \to \MDMeff$. If we assume that the heart of $\MDMeff$ is $\CI_\Nis^{sp}$ then Cor.\ref{pinvert} implies an equivalence $\CI_\Nis^{sp}[\frac{1}{p}] \cong \HI_\Nis[\frac{1}{p}]$. Then the two inclusions $i_\rec^\Nis[\frac{1}{p}]$ and $\omega^\CI_\rec[\frac{1}{p}]$ become equivalences, so Corollary~\ref{pinvert} can be seen as an analogue of this story.

\section*{Acknowledgement}

We thank an anonymous referee for many suggestions which improved the readability of this paper. 

Next, I would like to thank those members of motives with modulus school especially Federico Binda, Shane Kelly, Hiroyasu Miyazaki, and Shuji Saito. 

I must also thank Jun Koizumi, a master student in Tokyo university. He pointed out a mistake in the proof of the open Gysin triangles \cite{Koizumi}.




\newcommand{\OL}{\overline}
\section{Definition and Preparation}


In this paper, we work over a perfect field $k$. As in \cite[Definition 1.3.1]{KMSY19a} we write $\ulMCor$ for the category of modulus pairs and left proper admissible correspondences. 
%
%
%
We write
\[
\Ztr:\ulMCor \to PSh(\ulMCor)
\]
for the associated representable additive presheaf functor.



We set $\ulMNST$ to be the category of Nisnevich sheaves on $\ulMCor$ defined in \cite[Definition 4.5.2]{KMSY19a}. 

We define $\ulMDM^\eff$ to be the Verdier quotient
of ${\bf{D}}(\ulMNST)$ by the smallest localising subcategory containing all complexes of the form:\\
(CI) for ${\mathfrak{M}}\in\ulMCor$ ,
\[
\Ztr(\mathfrak{M}\otimes \cube) \to \Ztr(\mathfrak{M}).
\]
Note that complexes of the following form are quasi-isomorphic to zero in $D(\ulMNST)$:\\
(MV) for $\mathfrak{M}\in\ulMCor$ and an elementary Nisnevich cover%
\footnote{By elementary Nisnevich cover we mean morphisms $\{(\OL{U}, U^\infty) \to \mathfrak{M}, (\OL{V}, V^\infty) \to \mathfrak{M}\}$ such that $\{\OL{U} \to \OL{M}, \OL{V} \to \OL{M}\}$ is an elementary Nisnevich cover in Voevodsky's sense, and $U^\infty, V^\infty$ are the pullbacks of $M^\infty$. By $\mathfrak{U} \times_{\mathfrak{M}} \mathfrak{V}$ we mean $(\OL{U} \times_{\OL{M}} \OL{V}, pr_1^{-1}U^\infty + pr_2^{-1}V^\infty)$.}%
$(\mathfrak{U},\mathfrak{V})$ of $\mathfrak{M}$, 
\[
\Ztr(\mathfrak{U}\times_{\mathfrak{M}}\mathfrak{V}) \to \Ztr(\mathfrak{U}) \oplus \Ztr(\mathfrak{V}) \to \Ztr(\mathfrak{M}).
\]

We define $\MDMeff$ to be the smallest subcategory of $\ulMDM^\eff$ containing the objects $\ulM(\ol{M},\Minf)$ for modulus pair $(\ol{M}, \Minf)$ such that $\ol{M}$ is proper, and closed under isomorphisms, direct sums, shifts, and cones.

We have a functor 
\[
\udo:\ulMCor \to \Cor
\]
with $\udo\MpM = M^{\circ}:=\OL{M} ~\backslash ~ M^{\infty}$. This functor $\udo$ induces a triangulated functor
\[
\underline{\omega}_{\text{eff}}: \ulMDM^\eff \to \DM^{\eff}.
\]

\begin{defn}\label{Thom}
In Situation~\ref{situ}, 
we define the closed Thom space as
\begin{align*}
Th(N_ZM,cl) := \Cone\biggl (
&\ulM(\mathbb{P}(N_{\ol{Z}}\ol{M} \oplus \mathcal{O}), \pi^* Z^\infty + \{\infty\}_{\ol{Z}})  \\
\to &\ulM(\mathbb{P}(N_{\ol{Z}}\ol{M} \oplus \mathcal{O}), \pi^* Z^\infty) \biggr ).
\end{align*}
in $\ulMDM^\eff$, where $\pi: \mathbb{P}(N_{\ol{Z}}\ol{M} \oplus \mathcal{O}) \to \ol{Z}$ is the canonical projection. %
%
\end{defn}

Notice that the closed Thom space is a lifting of Voevodsky's Thom spaces in the sense that $\ulomega_{\eff}$ sends $Th(N_Z M, cl)$ to $Th(N_{Z^{\circ}} M^\circ)$.

For a smooth variety and  a vector bundle $E$ on $X$, Voevodsky defined the Thom space in $\DM^\eff$:
\[
Th_X(E)=\Cone(\PP_X(E\oplus \sO)\backslash \{\infty\}_X \to \PP_X(E\oplus \sO)).
\]

\begin{remark}
Note that $Th(N_Z M)$ is a direct summand of $\ulM(\mathbb{P}(N_{\ol{Z}}\ol{M} \oplus \mathcal{O}), \pi^* Z^\infty)$ since 
$\ulM(\mathbb{P}(\mathcal{O}), \pi^* Z^\infty)
\simeq \ulM(\ol{Z},Z^\infty)$. Cf.~\cite[Lemma~6]{KS19}. In fact, by the projective bundle formula ~\cite[Theorem 7.3.2]{KMSY20}, the closed Thom spaces 
are just Tate twists: $Th(N_ZM, cl) \cong \ulM(\ol{Z}, Z^\infty)(1)[2]$.
\end{remark}

\begin{remark}\label{birational}
For any proper birational morphism of schemes $f:X \to Y$ and effective Cartier divisors $Y^\infty \subset Y$ and $X^\infty=f^*Y^\infty$  satisfying $Y \backslash Y^\infty \simeq X \backslash X^\infty$, there is an isomorphism 
\[
\ulM(Y,Y^\infty) \simeq \ulM(X,X^\infty)
\]
in $\ulMCor$. Cf. \cite[Proposition~1.9.2.(b)]{KMSY19a}.
\end{remark}

The following basic homological algebra result will be useful.

\begin{lemma}\label{nine}
Consider a commutative diagram.
\[\xymatrix{
& A \ar[r]^{}  \ar[d] &  B  \ar[d] \ar[r] &  C \ar[d]  \\
 &  A' \ar[d] \ar[r]  &  B' \ar[d] \ar[r]  & C' \ar[d]  &   \\
 & A'' \ar[r]  &  B'' \ar[r] &  C''   &  \\
}
\]
in an additive category $\mathcal{A}$ such that all horizontal and vertical compositions
are zero. Suppose we have a triangulated functor $\Phi:K^b(\mathcal{A}) \to T$ to some
triangulated category $T$ such that (the complexes associated to) all three columns and two of the rows are sent to zero in $T$. Then the (the complex associated to) the third row is sent to zero in $T$ as well.
\end{lemma}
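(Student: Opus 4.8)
The plan is to exploit the standard fact that the total complex of a bicomplex with exact rows and columns is acyclic, adapted to the homotopy-category setting. Concretely, arrange the nine objects into a $3\times 3$ bicomplex $D^{\bullet\bullet}$ in $\mathcal{A}$ (sitting in homological degrees $0,1,2$ in each direction), where the given commutativity and the vanishing of all horizontal and vertical compositions say precisely that each row and each column is a complex and the squares commute. Since $\Phi$ is triangulated, it sends a (bounded) complex of complexes — i.e.\ an object of $K^b(\mathcal{A})$ built by taking cones iteratively — to an iterated cone in $T$; more precisely, for any column $E^\bullet = (E^0 \to E^1 \to E^2)$ the complex $\mathrm{Tot}$ of the double complex with that single column is $E^\bullet$ itself, and $\Phi$ applied to a short filtration gives distinguished triangles relating $\Phi$ of a subcomplex, a quotient complex, and the whole.

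First I would set up the three columns $A_\bullet := (A \to A' \to A'')$, $B_\bullet$, $C_\bullet$ as objects of $K^b(\mathcal{A})$, and observe that the horizontal maps assemble into morphisms of complexes $A_\bullet \to B_\bullet \to C_\bullet$ whose composite is zero (this uses that the horizontal compositions vanish in each row). By hypothesis $\Phi(A_\bullet) = \Phi(B_\bullet) = \Phi(C_\bullet) = 0$ in $T$. Next, form the two total complexes: let $X$ be the total complex of the subbicomplex consisting of the first two rows, and note the third row is (up to shift) the cone of the inclusion of the top row into $X$, while $X$ itself is built from the two top rows. The key mechanism is: applying $\Phi$ to the termwise-split short exact sequences of complexes
\[
0 \to (\text{a row}) \to (\text{total of two rows}) \to (\text{the other row}) \to 0
\]
yields distinguished triangles in $T$. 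Since two of the rows go to zero under $\Phi$, the total complex of those two rows goes to zero, and then the distinguished triangle relating that total complex to the third row forces $\Phi(\text{third row}) = 0$.

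The one genuinely careful point — and the main obstacle — is justifying that $\Phi$, being merely a triangulated functor on $K^b(\mathcal{A})$ (not assumed to commute with totalization in any structured way), does convert the filtration of a bicomplex by rows into the expected tower of distinguished triangles. This is handled by the observation that for a bounded bicomplex all totalizations are \emph{finite} iterated extensions built from termwise-split exact sequences of complexes in $K^b(\mathcal{A})$, each of which is by definition a distinguished triangle there; a triangulated functor sends distinguished triangles to distinguished triangles and sends the zero object to the zero object, and in a triangulated category $T$ if two vertices of a distinguished triangle are $0$ then so is the third. I would therefore do the bookkeeping with exactly two such short exact sequences (top row $\hookrightarrow$ top-two-rows $\twoheadrightarrow$ middle row, and top-two-rows $\hookrightarrow$ total $\twoheadrightarrow$ bottom row), apply $\Phi$, and chase the vanishing through the resulting triangles. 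I expect no further difficulty; the homological-algebra content is classical and only the translation into "$\Phi$ kills it" needs the triangulated-functor axioms.
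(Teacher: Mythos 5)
Your overall strategy is the right one (and surely the intended one behind the paper's laconic ``Clear''): view the $3\times 3$ diagram as a bounded bicomplex, use that termwise-split short exact sequences of bounded complexes give distinguished triangles in $K^b(\mathcal{A})$, push everything through $\Phi$, and use that in a triangle two vanishing vertices force the third to vanish. However, as written the chase has a genuine gap: the hypothesis that the three \emph{columns} are killed is never actually used. Your ``exactly two'' short exact sequences both come from the row filtration, and from them alone you only get: $\Phi(X)=0$ (where $X=\mathrm{Tot}$ of the two killed rows), and then, from $X \hookrightarrow \mathrm{Tot}(D) \twoheadrightarrow (\text{third row})$, an isomorphism $\Phi(\mathrm{Tot}(D)) \cong \Phi(\text{third row})$ up to shift --- a dead end. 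The ``distinguished triangle relating that total complex to the third row'' forces nothing unless you also know $\Phi(\mathrm{Tot}(D))=0$, and that is precisely where the column hypothesis must enter: you need the second, column-wise filtration of $\mathrm{Tot}(D)$, i.e.\ two further termwise-split sequences (first column into the total of the first two columns onto the second column; that subtotal into $\mathrm{Tot}(D)$ onto the third column), which together with $\Phi(A_\bullet)=\Phi(B_\bullet)=\Phi(C_\bullet)=0$ give $\Phi(\mathrm{Tot}(D))=0$. Without this step the argument cannot be correct, since it would ``prove'' a false statement: take every entry zero except $C''$ with $\Phi(C'')\neq 0$; then two rows are zero but the third is not killed (of course here the third column is not killed either, which is exactly the hypothesis your chase never touches). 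So the bookkeeping needs four sequences, not two.

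Two smaller corrections. First, the identification ``the third row is (up to shift) the cone of the inclusion of the top row into $X$'' is wrong: the canonical termwise-split subcomplex of $X=\mathrm{Tot}(\text{top two rows})$ is the \emph{middle} row, with quotient the top row shifted (so your first sequence is also written in the wrong direction), and the third row only appears through the full total complex as above. Second, the two killed rows need not be the top two; after the fix the chase in the other cases is just a relabeling (e.g.\ if the middle row is the unknown one: the columns kill $\mathrm{Tot}(D)$, then $\mathrm{Tot}(D)$ and the bottom row kill $X$, then $X$ and the top row kill the middle row), but it should be said, since your present write-up fixes the unknown row to be the bottom one.
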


\begin{proof}
Clear.
\end{proof}


\section{Excision} \label{sec:excision}

In this section we prove some basic excision results and prove that Thom spaces are invariant under change of étale neighbourhood.

Let $M=\MpM$ and $Z=\MpZ$ be as in Situation~\ref{situ}. For $n\in\Z_{\geq 0}$ we define a presheaf on $\ulMCor$ 
\[
C_{nZ}^{M} = \Coker\bigl(\Ztr\MpU \hookrightarrow \Ztr(\MpMnZ)\bigr)
\]
where $\OL{U} = \OL{M} \setminus \OL{Z}$, $U^\infty = M^\infty|_{\OL{U}}$ and $\Ztr\MpU \to \Ztr(\MpMnZ)$ is induced by the open immersion $\ol{U} \to \ol{M}$.

For a morphism $f:(\ol{M},\Minf) \to (\ol{N},N^\infty)$ induced by a morphism of schemes $\ol{f}:\ol{M} \to \ol{N}$, we call $f$  \emph{minimal} if we have $M^\infty=\ol{f}^*N^\infty$.
\newcommand{\MpN}{(\ol{N},N^{\infty})}
\newcommand{\MpNnZ}{(\ol{N},N^{\infty}+ nf^{-1}\ol{Z})}
\begin{prop} \label{Fetexcion}
Let $f:\MpN \to \MpM$ be an \'etale morphism (i.e., $f$ is induced by an \'etale morphism $\ol{f}:\ol{N} \to \ol{M}$ and is minimal). If $\ol{f}^{-1}\ol{Z} \to \ol{Z}$ is an isomorphism, then for any $n\in\Z_{\geq 0}$, the natural morphism $C_{nf^{-1}Z}^{N} \to C_{nZ}^{M}$ is a isomorphism in $\ulMDM^\eff$.
\end{prop}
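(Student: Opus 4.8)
The plan is to reduce the statement to a Nisnevich-local computation where the hypothesis that $\bar f^{-1}\bar Z \to \bar Z$ is an isomorphism forces the two relevant presheaves to agree after applying $\Ztr$ and sheafifying. First I would recall that $C_{nZ}^M$ is by construction the cokernel presheaf $\Coker(\Ztr\MpU \hookrightarrow \Ztr(\MpMnZ))$, so the morphism $C_{nf^{-1}Z}^N \to C_{nZ}^M$ fits into a morphism of short exact sequences of presheaves on $\ulMCor$ (the injectivity of $\Ztr\MpU \to \Ztr(\MpMnZ)$ holds because the underlying open immersion is a monomorphism and $\Ztr$ of a monomorphism with the pulled-back modulus is a monomorphism of representable presheaves). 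Applying $\bf{D}(\ulMNST)$ and then the localization to $\ulMDM^\eff$, it suffices by the triangle associated to this map of short exact sequences to show that the induced map of cones of $\bigl(\Ztr\MpU^N \to \Ztr(\ol M\setminus\ol Z\text{-version})\bigr)$ is an isomorphism in $\ulMDM^\eff$; concretely, it is enough to prove that the square
\[
\xymatrix{
\Ztr(\ol N\setminus\bar f^{-1}\bar Z, N^\infty|_{\cdot}) \ar[r]\ar[d] & \Ztr(\ol N, N^\infty + nf^{-1}\ol Z) \ar[d]\\
\Ztr(\ol M\setminus\ol Z, M^\infty|_{\cdot}) \ar[r] & \Ztr(\ol M, M^\infty + n\ol Z)
}
\]
becomes homotopy cocartesian in $\ulMDM^\eff$.

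The key step is to upgrade this to a genuine Nisnevich (MV) square of modulus pairs. The étale morphism $\bar f:\ol N\to\ol M$ together with the open immersion $j:\ol M\setminus\ol Z\hookrightarrow\ol M$ form a family of morphisms to $\MpM$-type pairs; the hypothesis $\bar f^{-1}\bar Z\xrightarrow{\sim}\bar Z$ says exactly that $\bar f$ is an isomorphism over a Nisnevich neighbourhood of $\bar Z$, so $\{j,\ \bar f\}$ (after restricting $\bar f$ to a suitable étale neighbourhood of $\bar Z$, which does not change the situation by Remark~\ref{birational}-type invariance and the minimality of $f$) is an elementary Nisnevich cover of $\ol M$ in Voevodsky's sense, and the moduli $N^\infty + nf^{-1}\ol Z$, $M^\infty|_{\ol M\setminus\ol Z}$ are by definition the pullbacks of $M^\infty + n\ol Z$. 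Hence this is precisely an (MV) square for the modulus pair $(\ol M, M^\infty + n\ol Z)$, so the associated complex
\[
\Ztr(\mathfrak U\times_{\mathfrak M}\mathfrak V)\to\Ztr(\mathfrak U)\oplus\Ztr(\mathfrak V)\to\Ztr(\mathfrak M)
\]
is quasi-isomorphic to zero in $D(\ulMNST)$, and a fortiori in $\ulMDM^\eff$. Unwinding, the fibre product $\mathfrak U\times_{\mathfrak M}\mathfrak V$ is $(\ol N\setminus\bar f^{-1}\bar Z,\cdot)$ (since $\bar f$ is an isomorphism over $\bar Z$ and an étale map away from it, the overlap with $\ol M\setminus\ol Z$ sees the full $\ol N\setminus\bar f^{-1}\bar Z$), which identifies the square above with the (MV) square and gives the homotopy cocartesianness, hence the isomorphism $C_{nf^{-1}Z}^N\simeq C_{nZ}^M$.

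I expect the main obstacle to be the bookkeeping of modulus pullbacks in the fibre product $\mathfrak U\times_{\mathfrak M}\mathfrak V$: one must check that $pr_1^{-1}U^\infty + pr_2^{-1}(N^\infty + nf^{-1}\ol Z)$ really equals $N^\infty|_{\ol N\setminus\bar f^{-1}\bar Z}$, i.e. that no extra multiplicity from the $n\ol Z$ term survives on the open locus $\ol M\setminus\ol Z$, and that the strict normal crossing / Cartier hypotheses of Situation~\ref{situ} guarantee all the intersection products behave well under the étale base change $\bar f$. A secondary subtlety is justifying that replacing $\ol N$ by a smaller étale neighbourhood of $\bar Z$ — needed to make $\{j,\bar f\}$ an honest cover of $\ol M$ — does not change $C_{nf^{-1}Z}^N$ in $\ulMDM^\eff$; this should follow from a Zariski-locality argument on $\ol N$, or alternatively from a second, easier application of (MV) along the decomposition of $\ol N$ into the neighbourhood of $\bar f^{-1}\bar Z$ and $\ol N\setminus\bar f^{-1}\bar Z$, together with the observation that $C_{nf^{-1}Z}^N$ only depends on the formal/Nisnevich-local picture near $\bar f^{-1}\bar Z$.
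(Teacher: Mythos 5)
Your argument is essentially the paper's own proof: the paper likewise writes down the map of short exact sequences of presheaves defining $C^{N}_{nf^{-1}Z}$ and $C^{M}_{nZ}$ and observes that the left-hand square is precisely an elementary Nisnevich (MV) square for $(\ol{M},M^\infty+n\ol{Z})$, hence homotopy Cartesian in $\ulMDM^\eff$, which yields the isomorphism on cokernels. The only superfluous step in your plan is the proposed shrinking of $\ol{N}$: the hypothesis $\ol{f}^{-1}\ol{Z}\iso\ol{Z}$ already makes $\{\ol{M}\setminus\ol{Z}\hookrightarrow\ol{M},\ \ol{f}\}$ an elementary Nisnevich cover (joint surjectivity is automatic), and minimality of $f$ together with \'etaleness of $\ol{f}$ gives $N^\infty+nf^{-1}\ol{Z}=\ol{f}^*(M^\infty+n\ol{Z})$, so neither the secondary locality argument nor any extra modulus bookkeeping is needed.
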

\begin{proof} Let $\ol{V}=\ol{N} \backslash f^{-1}\ol{Z}$. We have a diagram in $PSh(\ulMCor)$.\scriptsize
\[\xymatrix{
0 \ar[r] & \Ztr(\ol{V},\ol{V}\cap M^\infty) \ar[r] \ar[d] &  \Ztr\MpNnZ  \ar[r] \ar[d]  & C_{nf^{-1}Z}^N  \ar[r] \ar[d]  &  0  \\
0 \ar[r] &  \Ztr(\ol{U},U^\infty) \ar[r]  &  \Ztr(\MpMnZ)  \ar[r]  & C_{nZ}^M \ar[r]   &  0 \\
}
\]
\normalsize
The left hand-side square is homotopy Cartesian in $\ulMDM^\eff$ by the definition of $\ulMDM^\eff$. So we get the claim. 
\end{proof}
\begin{thm}\label{PEPEPE}
Let $f:\MpN \to \MpM$ be an \'etale morphism. If $\ol{f}^{-1}\ol{Z} \to \ol{Z}$ is an isomorphism, then for any $n \geq m \geq 0$ there is a diagram in $PSh(\ulMCor)$,
\[\xymatrix{
0 \ar[r] & \Ztr\MpNnZ \ar[r]^-{i_N}  \ar[d] &  \Ztr(\ol{N},N^\infty+m f^{-1}\ol{Z})  \ar[r] \ar[d]  & \text{Coker}(i_N)  \ar[r] \ar[d]  &  0  \\
0 \ar[r] &  \Ztr(\MpMnZ) \ar[r]^-{i_M}  &  \Ztr(\ol{M},M^\infty + m\ol{Z})  \ar[r]  & \text{Coker}(i_M)  \ar[r]   &  0 \\
}
\]
such that $\text{Coker}(i_N) \to \text{Coker}(i_M)$ is an isomorphism in $\ulMDM^\eff$.
\end{thm}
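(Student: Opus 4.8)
The plan is to reduce the statement to the excision result already established in Proposition \ref{Fetexcion}, together with the nine-lemma (Lemma \ref{nine}). The point is that the two cokernels in the statement are built out of the presheaves $C^N_{nf^{-1}Z}$ and $C^M_{nZ}$ (for varying values of the multiplicity), for which excision invariance under the étale neighbourhood $f$ is known. First I would set up the obvious commutative diagram relating everything: writing $\ol U = \ol M \setminus \ol Z$, $\ol V = \ol N \setminus f^{-1}\ol Z$, for each pair of multiplicities $n \geq m \geq 0$ we have short exact sequences of presheaves on $\ulMCor$
\[
0 \to \Ztr(\ol U, U^\infty) \to \Ztr(\ol M, M^\infty + n\ol Z) \to C^M_{nZ} \to 0
\]
and the analogous one over $\ol N$, and these are compatible with the inclusions coming from lowering the multiplicity of $\ol Z$ from $n$ to $m$ (note $\Ztr(\ol U, U^\infty)$ is independent of the multiplicity since $\ol Z$ is removed). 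This gives, for each fixed $f$ and each pair $n \geq m$, a commutative diagram whose rows are $\Ztr(\ol{-},-+n\ol Z) \to \Ztr(\ol{-},-+m\ol Z) \to C^{-}_{mZ}/C^{-}_{nZ}$ (image of one cokernel in the other) and whose columns are the comparison maps between the $N$-side and the $M$-side.

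The key steps, in order, are: (1) identify $\mathrm{Coker}(i_M)$ with the cokernel of the induced map $C^M_{nZ} \to C^M_{mZ}$, and likewise on the $N$-side — this is a diagram chase using that $\Ztr(\ol U,U^\infty)$ is common to both rows of the displayed diagram; (2) invoke Proposition \ref{Fetexcion} to conclude that $C^N_{nf^{-1}Z} \to C^M_{nZ}$ and $C^N_{mf^{-1}Z} \to C^M_{mZ}$ are both isomorphisms in $\ulMDM^\eff$; (3) assemble a $3\times 3$-type diagram with columns the vertical comparison maps $N \to M$ and apply Lemma \ref{nine} (or simply the five lemma in the triangulated setting, using that a map of distinguished triangles which is an isomorphism on two vertices is one on the third) to deduce that $\mathrm{Coker}(i_N) \to \mathrm{Coker}(i_M)$ is an isomorphism in $\ulMDM^\eff$. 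Concretely, the distinguished triangle $C^{\bullet}_{nZ} \to C^{\bullet}_{mZ} \to \mathrm{Coker}(i_\bullet) \to C^{\bullet}_{nZ}[1]$ in $\ulMDM^\eff$ has its first two terms sent to isomorphisms by the comparison $N \to M$, hence so is the third.

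The main obstacle — really the only non-formal point — is step (1): verifying that $\mathrm{Coker}(i_M)$, which is a priori the cokernel of $\Ztr(\ol M, M^\infty + n\ol Z) \hookrightarrow \Ztr(\ol M, M^\infty + m\ol Z)$, agrees with the cokernel of $C^M_{nZ} \to C^M_{mZ}$. This follows from the snake lemma applied to the map of the two short exact sequences (with $n$ and with $m$) defining the $C^M$'s, using that the map on the left-hand terms $\Ztr(\ol U,U^\infty) \to \Ztr(\ol U,U^\infty)$ is the identity, so its cokernel vanishes and $\mathrm{Coker}(i_M) \cong \mathrm{Coker}(C^M_{nZ}\to C^M_{mZ})$; one must also check $\Ztr(\ol M, M^\infty + n\ol Z) \to \Ztr(\ol M, M^\infty + m\ol Z)$ is injective, which holds because it is injective on sections over every modulus pair (an admissible correspondence into the larger-modulus pair is in particular one into the smaller). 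Everything else is a formal consequence of Proposition \ref{Fetexcion} and the fact that $\ulMDM^\eff$ is triangulated.
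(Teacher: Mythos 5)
Your proposal is correct and follows essentially the same route as the paper: both identify $\mathrm{Coker}(i_M)$ with $\mathrm{Coker}(C^M_{nZ}\to C^M_{mZ})$ via a nine-lemma argument on the $3\times3$ presheaf diagram, then apply Proposition~\ref{Fetexcion} to the vertical maps $C^N_{\bullet f^{-1}Z}\to C^M_{\bullet Z}$ and pass to $\ulMDM^\eff$ to conclude. You spell out the injectivity of $\Ztr(\ol M,M^\infty+n\ol Z)\hookrightarrow\Ztr(\ol M,M^\infty+m\ol Z)$, which the paper takes for granted, but there is no substantive difference in approach.
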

\begin{proof}
We consider the following commutative diagram in $PSh(\ulMCor)$,\scriptsize
\[\xymatrix{
&0 \ar[d] &0 \ar[d] &0 \ar[d] &\\ 
0 \ar[r] & \Ztr\MpU \ar[r]^{=}  \ar[d] &  \Ztr\MpU  \ar[d] \ar[r] &  0 \ar[d]  \\
0 \ar[r] &  \Ztr(\MpMnZ) \ar[d] \ar[r]^{~~~i_M}  &  \Ztr(\ol{M},M^\infty+m\ol{Z}) \ar[d] \ar[r]  & \text{Coker}(i_M) \ar[d]^{||} \ar[r] & 0  \\
0 \ar[r] & C_{nZ}^{M} \ar[r]^{c_M} \ar[d] &  C_{mZ}^{M} \ar[r] \ar[d]&  \text{Coker}(i_M) \ar[r] \ar[d] & 0 \\
&0&0&0&
}
\]\normalsize
where $i_M$ is the natural map and $c_M$ is the unique map determined by $i_M$. Now all columns and the two top rows are exact. Now by the nine lemma, we get that the bottom row is also exact. The morphisms $i_M$ and $\ol{f}$ induce the commutative diagram:
\[\xymatrix{
C_{nf^{-1}Z}^{N} \ar[d] \ar[r]^{c_N} & C_{mf^{-1}Z}^{N} \ar[d]\\
C_{nZ}^{M} \ar[r]^{c_M} & C_{mZ}^{M}
}
\]
By Proposition~\ref{Fetexcion}, the vertical morphisms become isomorphisms in $\ulMDM^{\eff}$. Hence the map between the cokernels of the two horizontal presheaf monomorphims become isomorphisms in $\ulMDM^{\eff}$.  
\end{proof}


\begin{cor}\label{PEPP}
In the situation of Theorem \ref{PEPEPE} Thom spaces are isomorphic
\[
Th(N_{f^{-1}\ol{Z}}N,cl) \simeq Th(N_{\ol{Z}}M,cl) 
\]
\end{cor}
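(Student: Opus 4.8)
The plan is to reduce the statement for Thom spaces to the excision statement already proven in Theorem~\ref{PEPEPE}. Recall that by Definition~\ref{Thom}, both the open and closed Thom spaces $Th(N_{\ol{Z}}M,*)$ are defined as cones of maps between motives of modulus pairs built from $\PP(N_{\ol{Z}}\ol{M}\oplus\sO)$ and its complement of the section at infinity, with modulus $\pi^*Z^\infty$ (plus $\{\infty\}_{\ol{Z}}$ in the closed case). The key observation is that the deformation to the normal cone, or more elementarily the étale-local structure of $N_{\ol{Z}}\ol{M}$, means that these projective-bundle constructions depend only on an étale neighbourhood of $\ol{Z}$ in $\ol{M}$ together with the divisor data. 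Since $\ol{f}:\ol{N}\to\ol{M}$ restricts to an isomorphism $\ol{f}^{-1}\ol{Z}\iso\ol{Z}$, the normal bundles agree: $N_{f^{-1}\ol{Z}}\ol{N}\cong \ol{f}^*N_{\ol{Z}}\ol{M}$, hence $\PP(N_{f^{-1}\ol{Z}}\ol{N}\oplus\sO)\cong \PP(N_{\ol{Z}}\ol{M}\oplus\sO)$ compatibly with the projections and with $Z^\infty$ versus $f^{-1}\ol{Z}$-pullback of $Z^\infty$.

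First I would make precise that $Z^\infty = M^\infty\cdot\ol{Z}$ pulls back along $\ol{f}^{-1}\ol{Z}\iso\ol{Z}$ to the corresponding intersection product $N^\infty\cdot f^{-1}\ol{Z}$ for the pair $\MpN$; this is immediate since $N^\infty=\ol{f}^{-1}M^\infty$ and $\ol{f}$ is étale, so pullback commutes with the intersection product of the relevant divisors, and the restriction to $\ol{Z}$ is an isomorphism. Next I would identify the projective bundles: $\PP(N_{f^{-1}\ol{Z}}\ol{N}\oplus\sO)\to f^{-1}\ol{Z}$ is the pullback along $f^{-1}\ol{Z}\iso\ol{Z}$ of $\PP(N_{\ol{Z}}\ol{M}\oplus\sO)\to\ol{Z}$, and likewise after removing the sections at infinity. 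Hence the four modulus pairs entering the definition of $Th(N_{f^{-1}\ol{Z}}N,*)$ are obtained from those entering $Th(N_{\ol{Z}}M,*)$ by base change along an isomorphism of base schemes, which yields isomorphisms in $\ulMCor$, hence in $\ulMDM^\eff$, compatibly with the structure maps whose cones define the Thom spaces.

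Actually the cleaner route, and the one matching the placement of this corollary right after Theorem~\ref{PEPEPE}, is to apply that theorem directly with $\ol{M}$ replaced by $\PP(N_{\ol{Z}}\ol{M}\oplus\sO)$, $M^\infty$ replaced by $\pi^*Z^\infty$, $\ol{Z}$ replaced by the zero section (or the section at infinity), and $\ol{N}$ replaced by the corresponding projective bundle over $\ol{N}$; one checks that these data again satisfy Situation~\ref{situ} (the zero section is smooth integral, the total space is smooth, the support is strict normal crossings because $\pi^*Z^\infty$ meets the section transversally over the snc divisor $Z^\infty$ on $\ol{Z}$) and that the étale map $\ol{f}$ lifts to an étale map of projective bundles restricting to an isomorphism over the relevant section. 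Then Theorem~\ref{PEPEPE}, with $m=0$ in the open case and $m=0,n=1$ in the closed case, gives that the relevant cokernels — which are precisely the Thom spaces by Definition~\ref{Thom} and the Remark identifying $Th$ as a cone/direct summand — map isomorphically in $\ulMDM^\eff$.

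The main obstacle I anticipate is purely bookkeeping: verifying that the hypotheses of Situation~\ref{situ} and Theorem~\ref{PEPEPE} genuinely hold for the projective-bundle data (smoothness, the snc condition for $\pi^*Z^\infty$ together with the section, and minimality/admissibility of the lifted étale correspondence), and checking that the isomorphism $N_{f^{-1}\ol{Z}}\ol{N}\cong\ol{f}^*N_{\ol{Z}}\ol{M}$ is compatible with all the divisor pullbacks so that the square of cokernels really is the one produced by Theorem~\ref{PEPEPE}. None of this should require new ideas beyond what is already in Section~\ref{sec:excision}; the content is that Thom spaces, being cones of maps of motives supported near $\ol{Z}$, inherit étale-neighbourhood invariance from $C_{nZ}^M$. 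One should also remark that the open and closed cases are handled uniformly, the only difference being whether one adds the $\{\infty\}_{\ol{Z}}$ component to the modulus and whether one passes to the complement of $\{\infty\}_{\ol{Z}}$, both of which are preserved under the base change along $\ol{f}^{-1}\ol{Z}\iso\ol{Z}$.
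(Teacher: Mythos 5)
Your proposal is correct, and your ``cleaner route'' is essentially the paper's own proof: the paper observes that $\ol{f}$ induces minimal \'etale morphisms on $(\PP(N_{f^{-1}\ol{Z}}\ol{N}\oplus\sO),\pi'^*Z'^\infty+n\{\infty\}_{f^{-1}\ol{Z}})$ and on the complements of the infinity sections, restricting to an isomorphism $\{\infty\}_{f^{-1}\ol{Z}}\simeq\{\infty\}_{\ol{Z}}$, and then quotes the excision results of Section~\ref{sec:excision}. One bookkeeping correction to that route: Theorem~\ref{PEPEPE} only compares the moduli $M^\infty+n\ol{Z}$ versus $M^\infty+m\ol{Z}$ on a fixed total space, so ``$m=0$ in the open case'' is not the right invocation; the open Thom space is the cokernel of the open-complement inclusion (a $C_{0Z}^{M}$-type object), which is covered by Proposition~\ref{Fetexcion} with $n=0$ --- this is exactly why the paper cites both Proposition~\ref{Fetexcion} (open case) and Theorem~\ref{PEPEPE} (closed case, $n=1$, $m=0$). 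Your first route is genuinely different and, if anything, simpler and stronger: since $\ol{f}$ is \'etale one has $N_{f^{-1}\ol{Z}}\ol{N}\cong(\ol{f}|_{f^{-1}\ol{Z}})^*N_{\ol{Z}}\ol{M}$, and since $\ol{f}|_{f^{-1}\ol{Z}}$ is an isomorphism carrying $Z'^\infty=f^{-1}\ol{Z}\cdot N^\infty$ to $Z^\infty$, the modulus pairs appearing in Definition~\ref{Thom} for $N$ are isomorphic in $\ulMCor$ to those for $M$, compatibly with the structure maps, so the defining cones agree without any appeal to excision. What the excision formulation buys is robustness (one never needs to identify the projectivized bundles, only to know the comparison map is \'etale and an isomorphism over the section), but for this corollary either argument suffices, and the hypothesis checks you list (smoothness, the snc condition for $\pi^*Z^\infty+\{\infty\}_{\ol{Z}}$, minimality of the lifted map) are indeed the only remaining content.
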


\begin{proof}
In the situation of Theorem \ref{PEPEPE} for any $n\in \{0,1\}$ the natural morphism
\[\ol{f}:(\mathbb{P}(N_{f^{-1}\ol{Z}}\ol{N} \oplus \mathcal{O}), \pi'^* Z'^\infty +n\{\infty\}_{f^{-1}\ol{Z}})\to(\mathbb{P}(N_{\ol{Z}}\ol{M} \oplus \mathcal{O}), \pi^* Z^\infty +n\{\infty\}_{\ol{Z}})\]
is minimal \'etale morphism where $Z'^{\infty}:=f^{-1}\ol{Z}.N^{\infty}$ and $\pi'$ is the projection $\mathbb{P}(N_{f^{-1}\ol{Z}}\ol{N} \oplus \mathcal{O}) \to f^{-1}\ol{Z}$. Moreover $\ol{f}$ induces an isomorphism $\{\infty\}_{f^{-1}\ol{Z}} \simeq \{\infty\}_{\ol{Z}}$ since $f^{-1}\ol{Z} \simeq \ol{Z}$. By Proposition~\ref{Fetexcion} and Theorem \ref{PEPEPE} we obtain the claim.
\end{proof}

\section{Blow up formula with weight} \label{sec:blowupFormula}

\subsection{Introduction}

Kelly-Saito proved a blow up formula for motives with modulus (see \cite{KS19}), but to construct tame Gysin map we need another formula, namely, Theorem~\ref{Blowupup}. In this section, we calculate some motives of Fano surfaces with modulus, after that we have constructed the formula which we need. We begin with the notation that we will need to perform the deformation to the normal cone technique.

\begin{notation}\label{otimessitu}
In the Situation \ref{situ}, we use the following notations.
\begin{align*}
M &:= (\ol{M},M^\infty),\\
Z &:= (\ol{Z},Z^\infty), \\
\ol{B}_M^{(\ol{Z})}\xrightarrow{\pi_M}\ol{M}\times\PP^1 &: \text{the blow up of } \ol{M}\times\PP^1 \text{ at  }\ol{Z}\times \{0\}, \\
B_M^\infty &:= \pi_M^*(M^\infty\times \PP^1+\ol{M}\times \{\infty\}), \\
W_M &: \text{the strict transform of }\ol{Z}\times\PP^1 \text{ w.r.t. }\ol{B}_M \to \ol{M}\times\PP^1, \\
B_{M,cl}^{(\ol{Z})} &:= (\ol{B}_M^{(\ol{Z})},B_M^\infty + W_M),\\
\ol{U}_M &:= \ol{M}\times\PP^1 ~ \backslash ~{\ol{Z}\times \PP^1}, \\
\ol{E}_M^{(\ol{Z})} &:= \text{the exceptional divisor of }\pi_M, \\
E_{M,cl}^{(\ol{Z})} &:=\bigl(\ol{E}_M,(\ol{E}_M\cap B_M^\infty) + (\ol{E}_M\cap W_M)\bigr).
\end{align*}


\end{notation}

The goal of this section is to prove the following theorem.

\begin{thm} \label{Blowupup}
In the notation of Situation~\ref{situ}, %
there exist a distinguished triangle in $\ulMDM^{\eff}$.
\[
\ulM(E_{M,cl}^{(\ol{Z})}) \to \ulM(Z)\oplus \ulM(B_{M,cl}^{(\ol{Z})}) \to \ulM(M\otimes\bcube) \xrightarrow{+}.
\]
\end{thm}



\subsection{Special case}

Let $H_0, H_1, H_2$ be the hyperplanes on $\PP^2$ given by $\{[0:\star:\star]\}$, $\{[\star:0:\star]\}$, $\{[\star:\star:0]\}$. We set $b:B\to \PP^2 $ to be the blow up of $\PP^2$ along $H_0\cap H_1$, and set 
$\widetilde{H_0}, \widetilde{H_1}, \widetilde{H_2}$ 
to be the strict transforms of 
$H_0, H_1, H_2$. We set 
\[ B_{cl}:=(B,\widetilde{H_0}+\widetilde{H_2}) \qquad 
\textrm{ and }\qquad   E_{cl}:=(E,E\cap \widetilde{H_0}), \qquad \]
where $E$ is the exceptional divisor of the blow up.

\begin{prop}\label{4.3}
There is a split distinguish triangle
\begin{equation}
\ulM(E_{cl}) \xrightarrow{
\begin{bmatrix}
p & i\\
\end{bmatrix}}
\ulM(\Spec k) \oplus \ulM(B_{cl}) \xrightarrow{
\begin{bmatrix}
j  \\
-b
\end{bmatrix}}\ulM(\PP^2,H_2) \underset{0}{\xrightarrow{+}}
\end{equation}
in $\ulMDM^\eff$, where $i,j$ are the natural closed immersions and $p$ is the natural projection $E \to \Spec k$.
\end{prop}

\begin{proof}
Since $B_{cl}$ has a projection to $E_{cl}$ which is a cube bundle, $i$ is an isomorphism. Additionally $j$ is also an isomorphism since $(\PP^2,H_2)$ is contractible \cite[Lemma 10]{KS19}. 
\end{proof}

\subsection{Proof of Theorem \ref{Blowupup}}

\begin{thm}\label{AAAAA}
There is a distinguish triangle. %
\[
\ulM(E_{(\AA^1,\emptyset),cl}^{(\{0\})}) \to \ulM(\{0\})\oplus \ulM(B_{(\AA^1,\emptyset),cl}^{(\{0\})}) \to \ulM((\AA^1,\emptyset)\otimes\bcube) \xrightarrow{+}.
\]
\end{thm}

A log version of the argument below appeared independently in Binda-Park-{\O}stv{\ae}r (see \cite[Proposition~7.2.5]{BPO}).

\begin{proof}
We set $T$ to be the blow up of $\PP^2$ at $H_0\cap H_2$, let $f$ be the exceptional divisor, and $h_i$ be the strict transforms of the $H_i$. 
We set $T'$ to be the blow up of $T$ at $h_0 \cap h_1$, let $e$ be the exceptional divisor, and the $\widetilde{h_i}$ be strict transforms of the $h_i$ and $\widetilde{f}$ be the strict transform of $f$. 
In particular, $T'$ is same as the blow up of $B$ at $\widetilde{H_0} \cap \widetilde{H_2}$. The fans of these toric varieties are as follows:
\[ \xymatrix{
B:& & \widetilde{H_0} & &
T':& & \widetilde{h_0} & \widetilde{f} &
T:& & h_0 & f
\\
& E \ar@{-}[r] &\ar@{-}[r] \bullet \ar@{-}[u]   \ar@{-}[dl] 
& \widetilde{H_2} &
& e \ar@{-}[r]&\ar@{-}[r] \bullet \ar@{-}[u]  \ar@{-}[ru] \ar@{-}[dl] 
& \widetilde{h_2} &
& &\ar@{-}[r] \bullet \ar@{-}[u]  \ar@{-}[dl] \ar@{-}[ur]
& h_2 
\\
&\widetilde{H_1} && &
&\widetilde{h_1} && &
&h_1 &&
}\]

The following triangle is isomorphic to it in Proposition~\ref{4.3}, since this is obtained by blowing up inside the modulus.
\begin{equation}\label{BlclT}
\ulM(E_{cl}) \to
\ulM(\Spec k) \oplus \ulM(T', \widetilde{h_2} + \widetilde{h_0} + \widetilde{f}) \to \ulM(T,h_2+f) \underset{0}{\xrightarrow{+}}
\end{equation}

Notice that there is a canonical isomorphism of toric surfaces $T \setminus h_2 \cong \A^1 \times \P^1$ inducing an isomorphism of modulus pairs $(T \setminus h_2, f \setminus (f \cap h_2)) \cong (\A^1,\emptyset) \otimes \bcube$. Furthermore, pulling back the square that give rise to \eqref{BlclT} along $\A^1 \times \P^1 \to T$ produces the triangle in the statement.

Since $\{T \setminus h_2 \to T, T \setminus h_0 \to T\}$ is a Zariski covering, by Mayer-Vietoris, to show that the triangle in the statement is distinguished, it suffices to show that the triangle associated to $T, T \setminus h_0$, and $T \setminus (h_0 \cup h_2)$ is distinguished, cf. Lem.~\ref{nine}. We have just seen that the triangle associated to $T$ is isomorphic distinguish triangle (\ref{BlclT}). On the other hand, since the centre of the blowup is contained in $h_0$, the triangle coming from $T \setminus h_0$ and $T \setminus (h_0 \cup h_2)$ is trivially distinguished.
%
\end{proof}

\begin{thm}\label{base}
For any modulus pair $(\ol{Y}, Y^\infty) \in \ulMCor$ such that $\ol{Y}$ is smooth and $Y^\infty$ is a strict normal crossings divisor, there is a distinguish triangle: %
\[
\ulM(E_{Y\otimes(\AA^1,\emptyset),*}^{(Y\otimes\{0\})}) \to \ulM(Y\otimes\{0\})\oplus \ulM(B_{Y\otimes(\AA^1,\emptyset),*}^{(Y\otimes\{0\})}) \to \ulM(Y\otimes(\AA^1,\emptyset)\otimes\bcube) \xrightarrow{+}.
\]
\end{thm}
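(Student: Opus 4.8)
The statement is obtained from Theorem~\ref{AAAAA} by ``tensoring with $Y$'', so the plan is to upgrade the special case $Y = \Spec k$ to an arbitrary smooth $Y$ with strict normal crossings modulus $Y^\infty$ by a base-change argument that commutes with all the geometric constructions involved in the definition of the blow up squares. First I would observe that the entire datum in Notation~\ref{otimessitu} applied to $M = Y\otimes(\AA^1,\emptyset)$ and $\ol{Z} = \ol{Y}\times\{0\}$ is compatible with the projection to $Y$: the blow up $\ol{B}^{(\ol{Y}\times\{0\})}_{Y\times\AA^1}$ of $\ol{Y}\times\AA^1\times\PP^1$ along $\ol{Y}\times\{0\}\times\{0\}$ is the fibre product over $\ol{Y}$ of $\ol{Y}$ with the blow up $\ol{B}^{(\{0\})}_{\AA^1}$ appearing in Theorem~\ref{AAAAA} (blowing up is local on the base and the centre here is pulled back from the $(\AA^1\times\PP^1)$-factor), and likewise the strict transform $W$, the exceptional divisor $\ol{E}$, the open complements, and the divisor $B^\infty$ are all pulled back from the corresponding objects over $\Spec k$, with the extra summand $Y^\infty \times \AA^1\times \PP^1$ added to the modulus.

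Next, I would record the precise sense in which the two squares of modulus pairs defining the triangles in Theorem~\ref{AAAAA} are ``base changed'' by $Y$. Concretely, each of the four corners of the square defining the $*=cl$ (resp.\ $*=op$) triangle for $(\AA^1,\emptyset)$ becomes, after applying $Y\otimes(-)$, the corresponding corner of the square for $Y\otimes(\AA^1,\emptyset)$; this uses that $\otimes$ of modulus pairs is associative and commutes with the relevant open immersions and with taking ambient blow ups along centres pulled back from one factor. The key point to check is that $Y\otimes(-)$ preserves the minimality/left-properness of the admissible correspondences and, crucially, preserves the ``contractibility/cube-bundle'' inputs used in Prop.~\ref{4.3} — but here we do not need that, because we are not re-running the proof of Prop.~\ref{4.3}; instead we run the proof of Theorem~\ref{AAAAA} verbatim with $\PP^2$ replaced by $Y\otimes\PP^2$, i.e. we blow up $\ol{Y}\times\PP^2$ along $\ol{Y}\times(H_0\cap H_2)$ and then along the strict transform of $\ol{Y}\times(H_0\cap H_1)$. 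The toric identifications of $T\setminus h_2$ with $\AA^1\times\PP^1$ become identifications of $\ol{Y}\times(T\setminus h_2)$ with $\ol{Y}\times\AA^1\times\PP^1$, compatibly with modulus.

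Then I would reproduce the Mayer--Vietoris/nine-lemma step: $\{ \ol{Y}\times(T\setminus h_2)\to \ol{Y}\times T,\ \ol{Y}\times(T\setminus h_0)\to \ol{Y}\times T\}$ is still a Zariski cover, and by Lemma~\ref{nine} it suffices to show the triangles associated to the three opens $\ol{Y}\times T$, $\ol{Y}\times(T\setminus h_0)$, $\ol{Y}\times(T\setminus(h_0\cup h_2))$ are distinguished. Over $\ol{Y}\times(T\setminus h_0)$ and $\ol{Y}\times(T\setminus(h_0\cup h_2))$ the centre of the blow up is disjoint from the locus, so the triangles are trivially distinguished (the blow up is an isomorphism there and the cone of an isomorphism sits in an obvious distinguished triangle). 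Over $\ol{Y}\times T$ the triangles are $Y\otimes(-)$ applied to the triangles of Prop.~\ref{4.3}; since $Y\otimes(-)$ is induced by a functor $\ulMCor\to\ulMCor$ it sends the split distinguished triangles of Prop.~\ref{4.3} to split distinguished triangles (the splittings, being maps of modulus pairs, are carried along). Assembling via Lemma~\ref{nine} gives the triangles in the statement.

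\textbf{Main obstacle.} The delicate point is not any single homotopy-invariance or blow-up input — those are inherited from Theorem~\ref{AAAAA} and Prop.~\ref{4.3} — but bookkeeping the moduli: one must verify that forming $Y\otimes(-)$ genuinely commutes with the ambient blow ups, strict transforms, and the passage from $B^\infty$ to $B^\infty + W$ (resp.\ to the open complement), i.e.\ that $\pi_{Y\otimes M}^*$ of the relevant divisor equals the pullback from the $M$-factor plus $Y^\infty\times(\text{everything})$, and that all the correspondences produced remain admissible and left proper after $\otimes Y$. This is where the hypothesis that $Y^\infty$ is a strict normal crossings divisor and $\ol{Y}$ is smooth is used, so that $|Y^\infty\times\AA^1\times\PP^1 + \text{(pulled-back divisors)}|$ remains strict normal crossings and Situation~\ref{situ} is preserved throughout.
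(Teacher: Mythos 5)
Your proof is correct, but it takes a genuinely longer route than the paper's. The paper's entire proof is a two-line observation: since $B_{Y\otimes(\AA^1,\emptyset),*}^{(Y\otimes\{0\})}= Y\otimes B_{(\AA^1,\emptyset),*}^{(\{0\})}$ (the key geometric commutation you also verify), the triangle in the statement is literally the triangle of Theorem~\ref{AAAAA} tensored by $(\ol{Y},Y^\infty)$. In other words the paper applies $\ulM(Y)\otimes(-)$, a triangulated endofunctor of $\ulMDM^\eff$, to an already-established distinguished triangle and is done. You instead re-run the whole proof of Theorem~\ref{AAAAA} after tensoring each geometric object by $Y$: you redo the toric blow-ups over $\ol{Y}\times\PP^2$, invoke $Y\otimes(-)$ applied to the split triangles of Prop.~\ref{4.3}, and reassemble by Mayer--Vietoris and Lemma~\ref{nine}. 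The advantage of your route is that you only ever use that $Y\otimes(-)$ is an additive functor (so it preserves split distinguished triangles, isomorphisms, cube bundles, and Zariski covers), rather than the stronger fact that $Y\otimes(-)$ descends to a triangulated functor on the localized category $\ulMDM^\eff$; the disadvantage is that it is substantially longer and requires re-verifying the bookkeeping on blow-ups, strict transforms, and moduli at each step, which the paper collapses into a single identity. Both are valid; the paper's is the ``expected'' argument once one accepts the tensor-triangulated structure on $\ulMDM^\eff$, whereas yours is a more elementary, self-contained check.
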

\begin{proof}
Since 
\begin{eqnarray*}
B_{Y\otimes(\AA^1,\emptyset),cl}^{(Y\otimes\{0\})}= Y\otimes B_{(\AA^1,\emptyset),cl}^{(\{0\})},
\end{eqnarray*}
the triangle in the statement is the triangle from Theorem~\ref{AAAAA} tensored by $(\ol{Y}, Y^\infty)$.
\end{proof}

\begin{situ} \label{situ2}
Let $f:(\ol{N},N^\infty) \to (\ol{M},M^\infty)$ be an \'etale morphism (i.e., $f$ is induced by an \'etale morphism $\ol{f}:\ol{N} \to \ol{M}$ and is minimal) such that $f$ induces an isomorphism $f^{-1}\ol{Z} \to \ol{Z}$.
\end{situ}

\begin{notation}\label{otimesnote}
In Situation \ref{situ2}, we pullback everything from Notation~\ref{otimessitu} along $f$. That is, we set 
\[ N, f^{-1}Z, \ol{B}_N^{(f^{-1}\ol{Z})}, \pi_N, B_N^\infty, W_N, B_{N,cl}^{(f^{-1}\ol{Z})}, \ol{U}_N, U_N, \ol{E}_N^{(f^{-1}\ol{Z})}, E_{N,cl}^{(f^{-1}\ol{Z})} \]
to be the pullbacks of 
\[ M, Z, \ol{B}_M^{(\ol{Z})}, \pi_M, B_M^\infty, W_M, B_{M,op}^{(\ol{Z})}, B_{M,cl}^{(\ol{Z})}, \ol{U}_M, U_M, \ol{E}_M^{(\ol{Z})}, E_{M,op}^{(, \ol{Z})}, E_{M,cl}^{(\ol{Z})} \]
along $\ol{f}: \ol{N} \to \ol{M}$. Explicitly, 
\begin{align*}
N &:= (\ol{N}, N^\infty), \\
f^{-1}Z &:= (f^{-1}\ol{Z},f^{-1}Z\cdot_{\ol{N}} N^\infty) \\
\ol{B}_N^{(f^{-1}\ol{Z})}\xrightarrow{\pi_N}\ol{N}\times\PP^1 &:\text{the blow up of } \ol{N}\times\PP^1 \text{ along } f^{-1}\ol{Z}\times \{0\}, \\
B_N^\infty &:= \pi_N^*(N^\infty\times \PP^1 + \ol{N}\times\{\infty\}) \\
W_N &:\text{the strict transform of }f^{-1}\ol{Z}\times\PP^1 \text{ w.r.t. }\ol{B}_N \to \ol{N}\times\PP^1, \\
B_{N,cl}^{(f^{-1}\ol{Z})}&:=(\ol{B}_N,B_N^\infty + W_N), \\
\ol{U}_N &:= \ol{N}\times\PP^1 ~ \backslash ~{f^{-1}\ol{Z}\times \PP^1}, \\
U_N &:=(\ol{U}_N,\ol{U}_N \cap (N^\infty\times \PP^1 + \ol{N}\times\{\infty\})) \\
\ol{E}_N^{(f^{-1}\ol{Z})} &:\text{the exceptional divisor of }\pi_N, \\
E_{N,cl}^{(f^{-1}\ol{Z})} &:= (\ol{E}_N,\ol{E}_N\cap B_N^\infty + (W_N\cap\ol{E}_N)_).
\end{align*}

\end{notation}

\begin{prop}
In Situation \ref{situ2} and Notation \ref{otimesnote},
\[
\xymatrix{
U_N \ar[r] \ar[d] & U_M \ar[d]\\
B_{N,cl}^{(f^{-1}\ol{Z})} \ar[r] & B_{M,cl}^{(\ol{Z})}\\
}
\]
is elementary Nisnevich square.
\end{prop}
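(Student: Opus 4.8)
The plan is to check the two defining conditions of an elementary Nisnevich square directly, using that everything in sight is obtained by pulling back along the \'etale morphism $\ol{f}\colon\ol{N}\to\ol{M}$ and that $\ol{f}^{-1}\ol{Z}\to\ol{Z}$ is an isomorphism. Recall that, in the modulus setting, a square as displayed is an elementary Nisnevich square precisely when the square of underlying schemes
\[
\xymatrix{
\ol{U}_N\times_{\ol{N}\times\PP^1}\overline{B}_{N,*}^{(f^{-1}\ol{Z})} \ar[r] \ar[d] & \overline{B}_{M,*}^{(\ol{Z})} \ar[d]\\
\overline{B}_{N,*}^{(f^{-1}\ol{Z})} \ar[r] & \overline{B}_{M,*}^{(\ol{Z})}
}
\]
wait—more precisely, when $\overline{B}_{N,*}^{(f^{-1}\ol{Z})}\to\overline{B}_{M,*}^{(\ol{Z})}$ is \'etale, the square of total spaces is Cartesian, the modulus divisors pull back correctly (minimality of the vertical $f$), and $\{\ol{U}_M\to\overline{B}_{M,*}^{(\ol{Z})},\ \overline{B}_{N,*}^{(f^{-1}\ol{Z})}\to\overline{B}_{M,*}^{(\ol{Z})}\}$ is an elementary Nisnevich cover in Voevodsky's sense, i.e. the complement of the image of $\ol{U}_M$ maps isomorphically to the complement of the open locus over which $\ol{f}$ is an isomorphism. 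So the first step is to record that blowing up commutes with the flat (indeed \'etale) base change $\ol{f}\times\id_{\PP^1}$ along the center $\ol{Z}\times\{0\}$, hence $\overline{B}_N^{(f^{-1}\ol{Z})}=\overline{B}_M^{(\ol{Z})}\times_{\ol{M}\times\PP^1}(\ol{N}\times\PP^1)$, the exceptional divisor, the strict transform $W_N$, and the divisor $B_N^\infty$ are all the pullbacks of their $M$-counterparts, and therefore $\overline{B}_{N,*}^{(f^{-1}\ol{Z})}\to\overline{B}_{M,*}^{(\ol{Z})}$ is \'etale and the vertical map is minimal, so the modulus part of the square is automatically correct for $*=op,cl$ by Notation~\ref{otimesnote}.

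The second step is the cartesianity of the square of total spaces together with the identification of the ``missing locus''. Since $\ol{U}_N=\ol{N}\times\PP^1\setminus f^{-1}\ol{Z}\times\PP^1$ is by definition the preimage of $\ol{U}_M=\ol{M}\times\PP^1\setminus\ol{Z}\times\PP^1$ under $\ol{f}\times\id$, and since passing to $\overline{B}_{*}^{(-)}$ is compatible with base change by the first step, the square is Cartesian on total spaces. For the Nisnevich condition one must show that $\ol{f}\times\id$ (hence its restriction to the blow ups) is an isomorphism over a neighbourhood of the closed complement $\overline{B}_{M,*}^{(\ol{Z})}\setminus\ol{U}_M$, which is supported over $\ol{Z}\times\PP^1$; but $\ol{f}^{-1}\ol{Z}\to\ol{Z}$ is an isomorphism by hypothesis, so $\ol{f}$ is an isomorphism over a neighbourhood of $\ol{Z}$ in $\ol{M}$ (an \'etale map inducing an isomorphism on a closed subscheme is an isomorphism \'etale-locally around it—more carefully, one shrinks to the henselisation or uses that the diagonal is open), hence $\overline{B}_{N,*}^{(f^{-1}\ol{Z})}\to\overline{B}_{M,*}^{(\ol{Z})}$ is an isomorphism over a neighbourhood of the relevant closed locus and restricts to an isomorphism on that closed locus itself. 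This is exactly the reduction already exploited in Proposition~\ref{Fetexcion} and Theorem~\ref{PEPEPE}, so I would phrase it as: the square reduces, over the open where $\ol{f}$ is an isomorphism, to an identity square, and over a Zariski neighbourhood of the center everything is an isomorphism.

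The third step just assembles these: \'etale upstairs maps, Cartesian total spaces, correct moduli, and isomorphism on closed complements give an elementary Nisnevich square in $\ulMCor$ in the sense of the footnote in Section~2, for each of $*=op$ and $*=cl$ (the two cases differ only by which components of $B_M^\infty+W_M$ one keeps, and the argument is insensitive to that). \textbf{The main obstacle} is purely bookkeeping rather than conceptual: one must verify that removing the strict transform $W_M$ (in the $op$ case) or adjoining it to the modulus (in the $cl$ case) does not disturb cartesianity or the Nisnevich neighbourhood condition, which comes down to the fact that $W_N$ and $\ol{E}_N$ are genuinely the scheme-theoretic pullbacks of $W_M$ and $\ol{E}_M$ — this uses flatness of $\ol{f}$ so that strict transform commutes with the base change — and that $\ol{U}_N\cap\overline{B}_{N,*}^{(f^{-1}\ol{Z})}$ really is the pullback of $\ol{U}_M\cap\overline{B}_{M,*}^{(\ol{Z})}$, which is immediate once $\overline{B}_{N}^{(f^{-1}\ol{Z})}$ is identified as a fibre product. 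I do not expect any difficulty beyond tracking these pullbacks carefully.
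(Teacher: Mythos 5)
Your first step is fine and matches what the paper builds into Notation~\ref{otimesnote}: blow-ups, exceptional divisors, strict transforms and the modulus divisors all pull back along the \'etale map $\ol{f}\times\id_{\PP^1}$, so the square of total spaces is Cartesian, the map $\ol{B}_{N,*}^{(f^{-1}\ol{Z})}\to\ol{B}_{M,*}^{(\ol{Z})}$ is \'etale, and all four morphisms are minimal. The genuine problem is in your second step, where you recast the Nisnevich condition as ``$\ol{f}\times\id$ is an isomorphism over a neighbourhood of the closed complement'' and justify it by the claim that an \'etale morphism inducing an isomorphism on a closed subscheme is an isomorphism over a neighbourhood of it. For Zariski neighbourhoods this is false: take $\ol{M}=\AA^1$, $\ol{Z}=\{0\}$ and (in characteristic $\neq 2$) $\ol{N}=\{y^2=x+1\}\setminus\{(0,-1),(-1,0)\}$ with $\ol{f}(x,y)=x$ (in characteristic $2$ use an Artin--Schreier cover instead); then $\ol{f}^{-1}\{0\}\to\{0\}$ is an isomorphism, but $\ol{f}$ has degree $2$ over the generic point, hence is not an isomorphism over any Zariski neighbourhood of $\ol{Z}$. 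Henselisation or openness of the diagonal only give such an isomorphism after an \'etale or pro-Zariski localisation, which is not what your sentence asserts; and if the Zariski-neighbourhood statement were really required, the Nisnevich condition would collapse to a Zariski one. Worse, in the situations where this proposition is actually used (the \'etale charts of Lemma~\ref{decompose} and the square $(\Omega)$ in the proof of the main theorem), $\ol{f}$ is typically \emph{not} an isomorphism over any Zariski neighbourhood of $\ol{Z}$, so your intermediate claim fails exactly in the intended applications.

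Fortunately the condition actually needed is much weaker and already follows from your first step: an elementary Nisnevich square only requires that the reduced closed complement of $U_N$ in $\ol{B}_{N,*}^{(f^{-1}\ol{Z})}$ map isomorphically onto that of $U_M$ in $\ol{B}_{M,*}^{(\ol{Z})}$. As in the paper's proof, these complements are $W_M\cup\ol{E}_M$ and $W_N\cup\ol{E}_N$ in the closed case, and $\ol{E}_M\setminus(\ol{E}_M\cap W_M)$ and $\ol{E}_N\setminus(\ol{E}_N\cap W_N)$ in the open case. They are supported over $\ol{Z}\times\PP^1$, respectively $f^{-1}\ol{Z}\times\PP^1$, and by your pullback identifications the complement upstairs is the base change of the complement downstairs along $f^{-1}\ol{Z}\to\ol{Z}$, which is an isomorphism by hypothesis; hence the map of complements is an isomorphism, with no statement about neighbourhoods needed. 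Replacing your ``isomorphism over a neighbourhood'' argument by this one-line observation repairs the proof and brings it in line with the paper's (very terse) argument, which consists precisely of noting minimality and identifying these complements.
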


\begin{proof}
All morphisms in the square are minimal. By definitions,  $(B_{M,cl}^{(\ol{Z})} ~ \backslash ~ U_M) = (W_M \cup \ol{E}_M)$,  $(B_{N,cl}^{(f^{-1}\ol{Z})} ~ \backslash ~ U_N) = (W_N \cup \ol{E}_N)$.
\end{proof}

\begin{cor}\label{NMdeform}
In Situation \ref{situ2} and Notation \ref{otimesnote}, the image under the functor $\ulM$ of the square
\[
\xymatrix{
B_{N,cl}^{(f^{-1}\ol{Z})} \ar[r] \ar[d] & B_{M,cl}^{(\ol{Z})}\ar[d]\\
N\otimes \bcube \ar[r]  & M\otimes\bcube \\
}
\]
is a homotopy Cartesian in $\ulMDM^\eff$.
\end{cor}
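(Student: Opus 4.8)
The plan is to deduce Corollary~\ref{NMdeform} by combining the previous proposition with the definition of $\ulMDM^\eff$ and the weighted blow-up formula of Theorem~\ref{base}. First I would record that for $*=op,cl$ the proposition just proved gives elementary Nisnevich squares
\[
\xymatrix{
U_N \ar[r] \ar[d] & U_M \ar[d]\\
B_{N,*}^{(f^{-1}\ol{Z})} \ar[r] & B_{M,*}^{(\ol{Z})},
}
\]
so that by property (MV) in the definition of $\ulMDM^\eff$ the associated Mayer--Vietoris sequence
\[
\Ztr(U_N) \to \Ztr(U_M) \oplus \Ztr(B_{N,*}^{(f^{-1}\ol{Z})}) \to \Ztr(B_{M,*}^{(\ol{Z})})
\]
is a distinguished triangle (equivalently, the square is homotopy Cartesian in $\ulMDM^\eff$). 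I would note that $U_M = \ol{U}_M\times\PP^1$-type objects carry a projection to $M\otimes\bcube$; more precisely, since $\ol{U}_M = \ol{M}\times\PP^1\setminus(\ol{Z}\times\PP^1) = (\ol{M}\setminus\ol{Z})\times\PP^1$ and the modulus is pulled back from $\ol{M}\times\{\infty\}$ plus $M^\infty\times\PP^1$, the pair $U_M$ is the box product $(\ol{M}\setminus\ol{Z}, M^\infty|_{\ol{M}\setminus\ol{Z}})\otimes\bcube$, and similarly $U_N$. Hence the excision statement of Proposition~\ref{Fetexcion} (applied after tensoring with $\bcube$, which is harmless) shows $\ulM(U_N)\to\ulM(U_M)$ becomes an isomorphism in $\ulMDM^\eff$: indeed the cone of $\Ztr(U_N)\to\Ztr(U_M)$ is, up to the cube factor, exactly a difference of the cokernel presheaves $C^N_{\bullet}, C^M_{\bullet}$ treated there, which are identified in $\ulMDM^\eff$.

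Next I would assemble these into the desired homotopy Cartesian square. Having the two homotopy Cartesian squares — the Nisnevich one above, and the trivial ``constant'' square expressing $\ulM(U_N)\iso\ulM(U_M)$ — I can paste them: the outer rectangle
\[
\xymatrix{
U_N \ar[r] \ar[d] & U_M \ar[d]\\
B_{N,*}^{(f^{-1}\ol{Z})} \ar[d] \ar[r] & B_{M,*}^{(\ol{Z})} \ar[d]\\
N\otimes\bcube \ar[r] & M\otimes\bcube
}
\]
needs to be shown homotopy Cartesian, and then since the top square is homotopy Cartesian, a standard pasting lemma for homotopy Cartesian squares (the analogue of the ``pasting law'' for pullbacks) yields that the bottom square is homotopy Cartesian. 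To see the outer rectangle is homotopy Cartesian, I would identify its total cofiber: going down the left column, $U_N\to B_{N,*}^{(f^{-1}\ol{Z})}\to N\otimes\bcube$ has cofiber governed by the weighted blow-up triangle of Theorem~\ref{base} (applied to $\ol{Y}=\ol{M}\setminus\ol{Z}$-type data on the $N$ side, i.e.\ with $Y = \ulomega$ of the relevant modulus pair pulled back along $\ol{f}$), and likewise for the $M$ side; the étale map $f$ identifies these cofiber data by Corollary~\ref{PEPP}, Theorem~\ref{PEPEPE} and the fact that $f^{-1}\ol{Z}\iso\ol{Z}$. In slightly different words: the columns compute (shifted) Thom-type objects $\ulM(E_{N,*})$, $\ulM(E_{M,*})$ plus a $\ulM(f^{-1}Z)$, $\ulM(Z)$ summand, and $f$ induces isomorphisms on all of these, so the square of cofibers of the two vertical composites is homotopy Cartesian (in fact both vertical maps on cofibers are isomorphisms), whence the outer rectangle is too.

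The technical organizing tool I would use throughout is Lemma~\ref{nine}: to check that the relevant $3\times 3$ (or $2\times3$) arrays of modulus pairs become homotopy Cartesian, it suffices to know that two of the three rows — here the ``$U$-row'' (excision, Prop.~\ref{Fetexcion}) and the ``$E$-plus-$Z$ row'' (Cor.~\ref{PEPP}/Thm.~\ref{PEPEPE}) — are sent to distinguished triangles, and then the third row, which is the content of the corollary, follows. Concretely I would feed the bicomplex built from the three columns $\{\text{left}\}, \{\text{middle}\}, \{\text{right}\}$ of the displayed rectangle, together with the functor $\Phi\colon K^b(\MPST)\to\ulMDM^\eff$, into Lemma~\ref{nine}.

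The main obstacle I anticipate is purely bookkeeping rather than conceptual: one must be careful that the modulus divisors match under the various pullbacks — in particular that forming $B_{N,*}^{(f^{-1}\ol{Z})}$ really is the pullback of $B_{M,*}^{(\ol{Z})}$ along $\ol{f}$ at the level of modulus pairs (this uses that blow-up commutes with the étale base change $\ol{f}$ and that $f^{-1}\ol{Z}\iso\ol{Z}$, which is exactly Situation~\ref{situ2} and was set up in Notation~\ref{otimesnote}), and that the ``constant'' identification $U_N\simeq U_M\otimes(\text{something})$ is compatible with the projections to $N\otimes\bcube$ and $M\otimes\bcube$. Once these compatibilities are in place, invoking Theorem~\ref{base} (the weighted blow-up formula) on both sides and matching along $f$ via Corollary~\ref{PEPP} gives the result with no further computation. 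I do not expect to need any genuinely new input beyond the excision results of Section~\ref{sec:excision} and the blow-up formula of Theorem~\ref{base}.
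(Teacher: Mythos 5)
Your high‑level strategy — establish that the square from the preceding proposition and the outer rectangle are both homotopy Cartesian, then cancel to conclude the bottom square is — is correct and is essentially what the paper intends. However, two of your intermediate claims are wrong, and the one you lean on for the crucial step does not hold.

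First, Proposition~\ref{Fetexcion} does \emph{not} show that $\ulM(U_N) \to \ulM(U_M)$ is an isomorphism in $\ulMDM^\eff$. That proposition asserts only that the cokernel presheaves $C^N_{\bullet} \to C^M_{\bullet}$ become isomorphic; it says nothing about the map $\Ztr(U_N)\to\Ztr(U_M)$ itself, and your sentence ``the cone of $\Ztr(U_N)\to\Ztr(U_M)$ is, up to the cube factor, exactly a difference of the cokernel presheaves'' does not withstand scrutiny. Indeed, if $\ulM(U_N)\to\ulM(U_M)$ were an isomorphism, then combining with the Nisnevich square relating $U_N, U_M, N\otimes\bcube, M\otimes\bcube$ would force $\ulM(N\otimes\bcube)\to\ulM(M\otimes\bcube)$ to be an isomorphism — but $\ol{N}\to\ol{M}$ is only \'etale, so this is false in general.

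Second, your proposed verification that the outer rectangle is homotopy Cartesian — by computing a ``total cofiber'' via the weighted blow-up triangle of Theorem~\ref{base} and identifying Thom-type objects — is misdirected. The cone of the open immersion $U_N\to N\otimes\bcube$ is not a Thom space and is not governed by the blow-up formula. The correct and much shorter observation is that the outer rectangle is \emph{itself} an elementary Nisnevich square in $\ulMCor$: the morphism $\ol{f}\times\id_{\PP^1}\colon \ol{N}\times\PP^1\to\ol{M}\times\PP^1$ is \'etale and restricts to an isomorphism over the closed subscheme $\ol{Z}\times\PP^1$ (because $f^{-1}\ol{Z}\to\ol{Z}$ is an isomorphism), $\ol{U}_M$ is its open complement, $\ol{U}_N=\ol{U}_M\times_{\ol{M}\times\PP^1}\ol{N}\times\PP^1$, and all moduli are pullbacks of $M^\infty\times\PP^1+\ol{M}\times\{\infty\}$. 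Hence the outer rectangle is homotopy Cartesian by (MV) directly. Once this is in place, your pasting/cancellation step goes through — the top square is homotopy Cartesian by the preceding proposition, the outer rectangle is homotopy Cartesian as just explained, and an octahedral-axiom argument shows the bottom square is homotopy Cartesian — with no need for Theorem~\ref{base}, Corollary~\ref{PEPP}, or Lemma~\ref{nine} in this step.
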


\begin{notation}
In Notation \ref{otimessitu} Consider the following ``weighted'' blowup formulas.
\[
(WBU)_{Z\to M}^{cl}: \text{ the object } \ulM \left (E_{M,cl}^{(\ol{Z})} {\to} Z \oplus B_{M,cl}^{(\ol{Z})} {\to} M\otimes\bcube \right ) \text{ is isomorphic to zero in }\ulMDM^\eff.
\]
\normalsize
\end{notation}
\begin{prop}\label{etaleetale}
In Situation \ref{situ2},  $(WBU)_{Z\to M}^*$ is true if and only if $(WBU)_{f^{-1}Z\to N}^*$ is true.
\end{prop}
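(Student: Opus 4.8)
The plan is to deduce Proposition~\ref{etaleetale} from the excision results already established, namely Proposition~\ref{Fetexcion} (and its upgraded form Theorem~\ref{PEPEPE}) together with the deformation square of Corollary~\ref{NMdeform}. The point is that $(WBU)_{Z\to M}^*$ asserts that a certain three-term complex of motives becomes acyclic in $\ulMDM^\eff$, and all four objects appearing in it ($E_{M,*}^{(\ol{Z})}$, $Z$, $B_{M,*}^{(\ol{Z})}$, $M\otimes\bcube$) have \'etale-local counterparts over $N$, with the comparison maps controlled by the isomorphism $f^{-1}\ol{Z}\to\ol{Z}$.

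First I would record that under Situation~\ref{situ2}, each of the four structure maps relating the $N$-picture to the $M$-picture becomes an isomorphism in $\ulMDM^\eff$ after passing to the relevant cokernel/cone. Concretely: for $\ulM(Z)$ versus $\ulM(f^{-1}Z)$ this is immediate since $f$ induces an isomorphism $f^{-1}\ol{Z}\to\ol{Z}$ respecting the moduli (so these objects are literally isomorphic in $\ulMCor$). For $\ulM(B_{M,*}^{(\ol{Z})})$ versus $\ulM(B_{N,*}^{(f^{-1}\ol{Z})})$ and for $\ulM(E_{M,*}^{(\ol{Z})})$ versus $\ulM(E_{N,*}^{(f^{-1}\ol{Z})})$, I would invoke Corollary~\ref{PEPP} (excision for Thom spaces) — or, more directly, the general excision mechanism of Theorem~\ref{PEPEPE} applied to the \'etale morphism $\ol{B}_N^{(f^{-1}\ol{Z})}\to\ol{B}_M^{(\ol{Z})}$, which restricts to an isomorphism over the strict transform $W_M$ (resp. the exceptional divisor $\ol{E}_M$). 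The key geometric input is that blowing up commutes with \'etale base change and that $f^{-1}(W_M)=W_N$, $f^{-1}(\ol{E}_M)=\ol{E}_N$, so the hypothesis "$\ol{f}^{-1}(\text{centre})\xrightarrow{\sim}\text{centre}$" of the excision statements is satisfied. Finally, for $\ulM(M\otimes\bcube)$ versus $\ulM(N\otimes\bcube)$ there is of course no isomorphism — but this is exactly where Corollary~\ref{NMdeform} enters: the square relating $B_{N,*}^{(f^{-1}\ol{Z})}$, $B_{M,*}^{(\ol{Z})}$, $N\otimes\bcube$, $M\otimes\bcube$ is homotopy Cartesian in $\ulMDM^\eff$.

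The cleanest way to organize the conclusion is via Lemma~\ref{nine} (the $3\times 3$ lemma). I would arrange a $3\times 3$ commutative diagram in $C^b(\ulMNST)$ whose columns are the three comparison complexes $[\,\text{($N$-object)}\to\text{($M$-object)}\,]$ for the four entries $E_*$, $Z\oplus B_*$, $M\otimes\bcube$ — wait, more precisely I would use two rows equal to the defining complexes $(WBU)_{f^{-1}Z\to N}^*$ and $(WBU)_{Z\to M}^*$ and a third row made of the vertical cones; the columns are then the cones of the vertical maps. By the three identifications above, the cone on the $E$-column and on the $(Z\oplus B)$-column are zero in $\ulMDM^\eff$, while the cone on the $(M\otimes\bcube)$-column is $\ulM$ of the cone of $N\otimes\bcube\to M\otimes\bcube$, which by Corollary~\ref{NMdeform} equals $\ulM$ of the cone of $B_{N,*}^{(f^{-1}\ol{Z})}\to B_{M,*}^{(\ol{Z})}$ — but that in turn is zero by excision. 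Hence all three columns vanish in $\ulMDM^\eff$, so by Lemma~\ref{nine} the "difference" row $(WBU)_{Z\to M}^*$ vanishes iff $(WBU)_{f^{-1}Z\to N}^*$ does. I expect the only real subtlety to be bookkeeping: matching up the homotopy-Cartesian square of Corollary~\ref{NMdeform} (which compares the two $B$'s and the two $M\otimes\bcube$'s) with the Thom/excision isomorphisms of Corollary~\ref{PEPP}–Theorem~\ref{PEPEPE} (which compare $B_N$ with $B_M$ and $E_N$ with $E_M$) so that the signs and the maps in the $3\times 3$ diagram genuinely commute, and checking that the minimality hypotheses on all the \'etale morphisms in sight are met after the blowup — both of which are routine given that everything is pulled back along the single \'etale map $\ol{f}$.
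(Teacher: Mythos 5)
Your plan to set up a $3\times 3$ comparison via Lemma~\ref{nine} is not unreasonable in spirit, but it contains a genuine error: you claim that the cone of $\ulM(B_{N,*}^{(f^{-1}\ol{Z})}) \to \ulM(B_{M,*}^{(\ol{Z})})$ (and hence of $\ulM(N\otimes\bcube) \to \ulM(M\otimes\bcube)$) vanishes ``by excision.'' This is false. The \'etale morphism $\ol{f}$ is only required to restrict to an isomorphism over $\ol{Z}$; away from $\ol{Z}$ it can be an arbitrary Nisnevich cover, so $\ulM(\ol{N},N^\infty)\to\ulM(\ol{M},M^\infty)$ is certainly not an isomorphism in general, and neither is the induced map on blowups. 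The excision statements Prop.~\ref{Fetexcion} and Thm.~\ref{PEPEPE} give isomorphisms of specific \emph{cokernels} (of $\Ztr$ of the open complement mapping into $\Ztr$ of the total space), never of the $\Ztr$'s of the total spaces themselves, and Cor.~\ref{PEPP} is an excision for Thom spaces, again not for the $B$'s. Corollary~\ref{NMdeform} only tells you the two cones $\Cone(B_N\to B_M)$ and $\Cone(N\otimes\bcube\to M\otimes\bcube)$ \emph{agree}, not that they vanish. With the $B$- and $(M\otimes\bcube)$-columns nonzero, the hypothesis of the $3\times3$ lemma (that all three columns die) fails, and the argument breaks.

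The paper's proof avoids this issue by not attempting to kill those columns at all. It observes that the two left-hand objects are \emph{literally} identified in $\ulMCor$ (since $\ol{f}^{-1}\ol{Z}\xrightarrow{\sim}\ol{Z}$ forces $E_{N,*}^{(f^{-1}\ol{Z})}\cong E_{M,*}^{(\ol{Z})}$ and $f^{-1}Z\cong Z$), stacks the resulting square over the square from Corollary~\ref{NMdeform}, and then applies the elementary fact that when the lower square of a stacked pair is homotopy Cartesian, the outer square is homotopy Cartesian if and only if the upper square is. Since $(WBU)_{f^{-1}Z\to N}^*$ and $(WBU)_{Z\to M}^*$ are precisely the assertions that the upper and outer squares are homotopy Cartesian, the equivalence follows immediately. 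You should replace your vanishing claims by this ``two-of-three'' principle for homotopy Cartesian squares, keeping only your correct identifications of the $E$- and $Z$-objects.
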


\begin{proof}
The following diagram commutes in $\ulMCor$.
\[
\xymatrix{
\Ztr(E_{M,cl}^{(\ol{Z})})(\simeq \Ztr(E_{N,cl}^{(f^{-1}\ol{Z})})) \ar[r] \ar[d] & \Ztr(Z)(\simeq \Ztr(f^{-1}Z)) \ar[d]\\
\Ztr(B_{N,cl}^{(f^{-1}\ol{Z})}) \ar[r]\ar[d] &\Ztr(N\otimes\bcube) \ar[d]\\
\Ztr(B_{M,cl}^{(\ol{Z})}) \ar[r] & \Ztr(M\otimes\bcube)\\
}
\]
By Corollary~\ref{NMdeform} we know the lower square is a homotopy Cartesian in $\ulMDM^\eff$, so the outer square is a homotopy Cartesian iff the upper square is.
\end{proof}

The following lemma is proved in \cite{KS19}.

\begin{lemma}[{\cite[Lemma 8]{KS19}}] \label{decompose}
In Situation~\ref{situ}, up to replacing $\OL{M}, \OL{Z}, M^{\infty}$ by $\OL{V}, \OL{V} \cap \OL{Z}, \OL{V} \cap M^{\infty}$ for some open neighborhood $x \in\OL{V}$, there exists an \'etale morphism $q:\OL{M} \to \mathbb{A}^{m}$ such that $\OL{Z} = q^{-1}(\mathbb{A}^{m-1}\times\{0\})$ and $M^{\infty} = q^{-1}(\{T_1^{d_1} ...T_s^{d_s} = 0\})$ where $T_{i}$ are the coordinates of $\mathbb{A}^{m}$.
\end{lemma}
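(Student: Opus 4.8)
The plan is to reduce the statement of Lemma~\ref{decompose} to the standard structure theory of étale morphisms via the Noether-type normalization that is available once $|M^{\infty}+\ol{Z}|$ is a strict normal crossings divisor. First I would fix the closed point $x \in \ol{Z}$ (the assertion is local around an arbitrary point, and the only interesting case is $x$ lying on $\ol{Z}$ and possibly on several components of $M^\infty$; away from $\ol{Z}$ or from $M^\infty$ the claim degenerates and is easy). Since $\ol{M}$ is smooth of dimension $m$ over the perfect field $k$ and $|M^{\infty}+\ol{Z}|$ is strict normal crossings at $x$, by the very definition of an snc divisor there is a regular system of parameters $t_1,\dots,t_m \in \mathcal{O}_{\ol{M},x}$ such that each component of $M^{\infty}$ through $x$ and the component $\ol{Z}$ are cut out by distinct coordinates among the $t_i$. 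After relabelling, I would arrange that $\ol{Z} = \{t_m = 0\}$ locally and that $M^{\infty}$ is locally defined by $t_1^{d_1}\cdots t_s^{d_s} = 0$ for some $s \le m-1$ (note $\ol{Z} \not\subset M^{\infty}$ forces the coordinate cutting out $\ol{Z}$ to be disjoint from those appearing in $M^{\infty}$, which is exactly why we can take the index $m$ for $\ol{Z}$ and indices $\le s \le m-1$ for $M^{\infty}$).

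Next I would promote this formal/local coordinate system to an actual étale map on a Zariski neighborhood. The functions $t_1,\dots,t_m$, being a regular system of parameters at $x$ on the smooth $k$-scheme $\ol{M}$, extend to regular functions on some affine open $\ol{V} \ni x$, and the induced morphism $q = (t_1,\dots,t_m): \ol{V} \to \mathbb{A}^m$ is étale at $x$ because it induces an isomorphism on completed local rings (equivalently, $dq$ is an isomorphism on cotangent spaces at $x$ and both schemes are smooth of the same dimension). Shrinking $\ol{V}$ further, $q$ is étale on all of $\ol{V}$. It then remains to check, after possibly shrinking once more, that $\ol{Z} \cap \ol{V} = q^{-1}(\mathbb{A}^{m-1}\times\{0\})$ and $M^{\infty} \cap \ol{V} = q^{-1}(\{T_1^{d_1}\cdots T_s^{d_s}=0\})$ as subschemes: one inclusion is immediate from $t_m = q^*T_m$ and $t_i = q^*T_i$; for the reverse inclusion I would use that both sides are divisors agreeing near $x$ with the same multiplicities, that $q$ is étale hence flat so pullbacks of Cartier divisors behave well, and that $\ol{Z}$ is integral — so shrinking $\ol{V}$ to remove any spurious extra components of $q^{-1}(\cdots)$ not passing through $x$ finishes the identification.

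The main obstacle I anticipate is purely bookkeeping rather than conceptual: making sure that the single open neighborhood $\ol{V}$ can be chosen so that \emph{all three} conditions — $q$ étale, $\ol{Z} = q^{-1}(\mathbb{A}^{m-1}\times\{0\})$, and $M^{\infty} = q^{-1}(\{T_1^{d_1}\cdots T_s^{d_s}=0\})$ — hold simultaneously, and handling the subtlety that $\ol{Z}$ must equal the \emph{scheme-theoretic} preimage (not just set-theoretic). This is where one must use reducedness/integrality of $\ol{Z}$ together with the fact that $q^{-1}(\mathbb{A}^{m-1}\times\{0\})$ is already reduced near $x$ since $q$ is étale and $\mathbb{A}^{m-1}\times\{0\}$ is reduced; similarly the multiplicities $d_i$ of $M^{\infty}$ are preserved under the étale pullback. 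Everything else — the existence of the coordinate system, the étaleness criterion via completed local rings — is standard, so I would keep those steps brief and concentrate the write-up on the divisor identifications.
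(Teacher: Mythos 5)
The paper does not give its own proof of Lemma~\ref{decompose}: it merely cites \cite[Lemma 8]{KS19}, so there is no internal argument to compare against. Your proposal is the standard proof one would give for a statement of this type, and it is correct. The chain of steps --- fix a closed point $x\in\ol{Z}$; use the strict normal crossings hypothesis to produce a regular system of parameters $t_1,\dots,t_m$ at $x$ in which each component of $|M^\infty+\ol{Z}|$ through $x$ is a coordinate hyperplane; observe that $\ol{Z}\not\subset M^\infty$ forces the parameter cutting out $\ol{Z}$ (say $t_m$) to be disjoint from those in $M^\infty$, so $s\le m-1$; package the $t_i$ into $q:\ol{V}\to\mathbb{A}^m$ and check étaleness at $x$ via the cotangent criterion; shrink $\ol{V}$ to force the scheme-theoretic identifications $\ol{Z}\cap\ol{V}=q^{-1}(\mathbb{A}^{m-1}\times\{0\})$ and $M^\infty\cap\ol{V}=q^{-1}(\{T_1^{d_1}\cdots T_s^{d_s}=0\})$ --- is exactly right, and the two points you flag as subtle (reducedness/integrality of the preimage, and preservation of Cartier multiplicities along a flat map) are the correct things to worry about. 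Two small remarks that would tighten the write-up: the cotangent-space étaleness criterion at a non-rational closed point $x$ uses the exact sequence $\mathfrak{m}_x/\mathfrak{m}_x^2\to\Omega_{\ol{M}/k}\otimes\kappa(x)\to\Omega_{\kappa(x)/k}\to 0$ together with $\Omega_{\kappa(x)/k}=0$, which is where perfectness of $k$ enters; and when shrinking $\ol{V}$ to kill spurious components of $q^{-1}(\mathbb{A}^{m-1}\times\{0\})$ and of the pulled-back coordinate hyperplanes, one should also shrink to discard any components of $M^\infty$ not passing through $x$, so that the local equation $t_1^{d_1}\cdots t_s^{d_s}$ really does cut out all of $M^\infty\cap\ol{V}$. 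Neither is a gap; your argument goes through.
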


Now we have enough pieces to prove Theorem~\ref{Blowupup}.

\begin{proof}[Proof of Theorem \ref{Blowupup}]
For any open covering $\{\ol{U}_i \to \ol{M}\}_i$, by the Mayer-Vietoris sequence we obtain that $(WBU)_{Z\to M}^*$ is true if $(WBU)_{Z\cap\ol{U}\to (\ol{U},\ol{U}\cap M^\infty)}^*$ is true for all open sub schemes $\ol{U} \subset \ol{U}_i$ for all $i$. By Proposition \ref{etaleetale} and Lemma~\ref{decompose} we can reduce the claim to the case $(WBU)_{Z\otimes\{0\} \to Z\otimes (\AA^1,\emptyset)}^*$, but this was proved in Theorem \ref{base}.
\end{proof}

\section{Construction of the Tame Gysin map} \label{sec:tameGysinMapConstruction}

\subsection{Notation}

We continue with $\fM=(\ol{M},\Minf)$ and $\fZ=(\ol{Z},Z^{\infty})$
satisfying the hypotheses of Situation~\ref{situ}. We furthermore drop all the indexes "$M$" from the notation of Notation~\ref{otimessitu}. So 
\[
\ol{B}:\text{is the blow-up of } \ol{M} \times \PP^1 \text{ along } \ol{Z} \times \{0\}, \textrm{ and }
\]
\[
E:\text{is the exceptional divisor of }q,
\]
so we have a Cartesian square
\[ \xymatrix{
E \ar[r] \ar[d]_\pi & \ol{B} \ar[d]^q \\
\ol{Z} \times \{0\} \ar[r] & \ol{M} \times \PP^1.
}
\]
We put  
\[ \mf{B}:= (\ol{B},q^*(M^\infty\times\PP^1+\ol{M}\times \{\infty\})),
\]
\[
\mf{B}_{Z,cl}:= (\ol{B},q^*(M^\infty\times\PP^1+\ol{M}\times \{\infty\})+\widetilde{(\ol{Z}\times\PP^1))} \]
\[
\mf{E}_{Z,cl}:=(\PP(N_{Z}M\oplus \mathcal{O}),\pi^*Z^\infty + \PP(0 \oplus \mathcal{O})).
\]

\begin{thm}\label{KS}
There is a distinguished triangle in $\ulMDM^\eff$
\[
\ulM(\mf{E}_{Z,cl}) \to \ulM(B_{Z,cl})\oplus \ulM(\mf{Z}) \to  \ulM(\mf{M}\otimes \bcube) \xrightarrow{+}
\]
and isomorphism
\[
    Th(N_ZM,cl) = \Cone(\ulM(\mf{E}_{Z,cl}) \to \ulM(\mf{E
    }_{})) \simeq  \Cone(\ulM(\mf{B}_{Z,cl}) \to \ulM(\mf{B}_{})). 
\]
\end{thm}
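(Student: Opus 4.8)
The plan is to deduce both claims from Theorem~\ref{Blowupup} (equivalently the statement labelled Theorem~\ref{KS} repeats Theorem~\ref{Blowupup} after dropping the subscript $M$), so the first assertion requires no new work beyond matching notation: the modulus pairs $\mf{B}_{Z,*}$, $\mf{E}_{Z,*}$, $\mf{Z}$ here are literally $B_{M,*}^{(\ol{Z})}$, $E_{M,*}^{(\ol{Z})}$, $Z$ of Notation~\ref{otimessitu}, and $\mf{E}_{Z,cl} = (\PP(N_Z M\oplus\sO),\pi^*Z^\infty + \PP(0\oplus\sO))$ is identified with $E_{M,cl}^{(\ol{Z})}$ via the standard fact that the exceptional divisor of the blow-up of $\ol{M}\times\PP^1$ along $\ol{Z}\times\{0\}$ is $\PP(N_{\ol{Z}}\ol{M}\oplus\sO)$, the divisor $\PP(0\oplus\sO)$ being the intersection with the strict transform $W_M$. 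So the first displayed triangle is exactly the one from Theorem~\ref{Blowupup}.

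For the isomorphisms, I would argue as follows. First, $Th(N_ZM,*)$ is \emph{defined} (Definition~\ref{Thom}) as the cone of $\ulM(\mf{E}_{Z,*}) \to \ulM(\mf{E})$, where $\mf{E} = (\PP(N_{\ol{Z}}\ol{M}\oplus\sO),\pi^*Z^\infty)$; this gives the first equality. So it remains to produce the isomorphism
\[
\Cone\bigl(\ulM(\mf{E}_{Z,*}) \to \ulM(\mf{E})\bigr) \simeq \Cone\bigl(\ulM(\mf{B}_{Z,*}) \to \ulM(\mf{B})\bigr).
\]
The idea is to apply the octahedral axiom (or Lemma~\ref{nine}) to the two distinguished triangles of Theorem~\ref{Blowupup}: the triangle with weight $*$ and the analogous triangle obtained by taking $\ol{Z}=\emptyset$ in the construction, which collapses to the ``no-center'' case. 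Concretely, one has a map of distinguished triangles
\[
\bigl(\ulM(\mf{E}_{Z,*}) \to \ulM(\mf{Z})\oplus\ulM(\mf{B}_{Z,*}) \to \ulM(\mf{M}\otimes\bcube) \xrightarrow{+}\bigr)
\]
\[
\longrightarrow
\bigl(\ulM(\mf{E}) \to \ulM(\mf{Z})\oplus\ulM(\mf{B}) \to \ulM(\mf{M}\otimes\bcube)\xrightarrow{+}\bigr),
\]
where the second triangle is the trivial/degenerate one coming from the fact that without blowing up, $\ol{B}=\ol{M}\times\PP^1$, $\mf{E}=\mf{Z}$ and $\mf{B}_{Z,*}$ degenerates appropriately, so that it splits as $\ulM(\mf{E})\xrightarrow{\mathrm{id}\oplus 0}\ulM(\mf{Z})\oplus\ulM(\mf{M}\otimes\bcube)$. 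Passing to cones of the three vertical maps, the first column has cone $Th(N_ZM,*)[\,?\,]$, the middle column has cone $\Cone(\ulM(\mf{B}_{Z,*})\to\ulM(\mf{B}))$ (the $\ulM(\mf{Z})$ summand cancelling), and the third column has cone zero; by the $3\times3$-lemma (Lemma~\ref{nine} applied with $\Phi$ the identity, or directly the octahedron) the first two cones are isomorphic, up to shift, which after bookkeeping gives exactly the claimed isomorphism. One checks the shifts are trivial because the relevant connecting maps in the degenerate triangle vanish.

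The main obstacle I anticipate is purely organizational rather than deep: setting up the correct degenerate triangle for the ``$\ol{Z}=\emptyset$'' comparison and verifying that the maps $\ulM(\mf{E}_{Z,*})\to\ulM(\mf{E})$ and $\ulM(\mf{B}_{Z,*})\to\ulM(\mf{B})$ fit into a genuine morphism of distinguished triangles compatible with the splittings — i.e. that the square relating $\ulM(\mf{B}_{Z,*})\to\ulM(\mf{M}\otimes\bcube)$ and its degenerate counterpart commutes on the nose in $\ulMDM^\eff$. Once that commutative ladder is in place, the conclusion is a formal consequence of Lemma~\ref{nine} (taking $\Phi = \operatorname{id}_{\ulMDM^\eff}$ after choosing functorial cones, or invoking the classical $3\times 3$ diagram lemma in a triangulated category). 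I do not expect to need any geometry beyond the identification of the exceptional divisor with $\PP(N_{\ol{Z}}\ol{M}\oplus\sO)$ and of $W_M\cap\ol{E}$ with $\PP(0\oplus\sO)$, both standard for deformation to the normal cone.
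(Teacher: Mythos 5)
Your reduction of the first displayed triangle to Theorem~\ref{Blowupup} is fine, and so is the first equality: $Th(N_ZM,*)=\Cone(\ulM(\mf{E}_{Z,*})\to\ulM(\mf{E}))$ is Definition~\ref{Thom} combined with the standard identifications $\ol{E}\cong\PP(N_{\ol{Z}}\ol{M}\oplus\mathcal{O})$ and $\ol{E}\cap W_M=\PP(0\oplus\mathcal{O})$. Your cone-comparison ladder is also the right formal skeleton. The genuine gap is your second row. The triangle $\ulM(\mf{E})\to\ulM(\mf{Z})\oplus\ulM(\mf{B})\to\ulM(\mf{M}\otimes\bcube)\xrightarrow{+}$ is \emph{not} trivial or degenerate, and it is not what you get by ``taking $\ol{Z}=\emptyset$'': that specialization yields only the vacuous triangle $0\to 0\oplus\ulM(\mf{M}\otimes\bcube)\to\ulM(\mf{M}\otimes\bcube)$, which involves neither $\mf{E}$ nor $\mf{B}$. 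In the triangle you actually wrote down, $\ol{B}$ is the honest blow-up of $\ol{M}\times\PP^1$ along $\ol{Z}\times\{0\}$ and $\mf{E}=(\PP(N_{\ol{Z}}\ol{M}\oplus\mathcal{O}),\pi^*Z^\infty)$ is a $\PP^1$-bundle over $\ol{Z}$, so $\mf{E}\neq\mf{Z}$ (granting the projective bundle formula its motive has an extra Tate-twisted summand), and $\ulM(\mf{B})\to\ulM(\mf{M}\otimes\bcube)$ is not an isomorphism: Remark~\ref{birational} does not apply because the center $\ol{Z}\times\{0\}$ is not contained in the modulus $q^*(M^\infty\times\PP^1+\ol{M}\times\{\infty\})$, so the interiors genuinely differ. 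Hence the claimed splitting $\ulM(\mf{E})\xrightarrow{\mathrm{id}\oplus 0}\ulM(\mf{Z})\oplus\ulM(\mf{M}\otimes\bcube)$ is false, and with it the justification that your comparison triangle is distinguished.

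What is needed at exactly this point is the \emph{unweighted} smooth blow-up formula in $\ulMDM^\eff$, i.e.\ the theorem of Kelly--Saito \cite{KS19}, which asserts that $\ulM(\mf{E})\to\ulM(\mf{B})\oplus\ulM(\mf{Z})\to\ulM(\mf{M}\otimes\bcube)\xrightarrow{+}$ is distinguished; this is a substantive external input, not a formal degeneration, and it is precisely what the paper invokes. Once you replace your degenerate second row by this triangle, the rest of your argument --- the morphism of triangles induced by $\mf{E}_{Z,*}\to\mf{E}$ and $\mf{B}_{Z,*}\to\mf{B}$ together with the identities on $\ulM(\mf{Z})$ and $\ulM(\mf{M}\otimes\bcube)$, followed by comparison of cones in which the $\ulM(\mf{Z})$ summand and the third column cancel --- is exactly the paper's (very terse) proof, and the shift bookkeeping you worry about is indeed harmless.
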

\begin{proof}
The first claim is Theorem \ref{Blowupup}, the second claim  follows from the first and the blow up formula 
\[
\ulM(\mf{E}) \to \ulM(\mf{B } )\oplus \ulM(\mf{Z}) \to  \ulM(\mf{M}\otimes \bcube) \xrightarrow{+}
\]
proved in \cite[Theorem, page 1]{KS19}.
\end{proof}

\subsection{Geometrical study}
We write $i_1$ for the natural embedding of schemes $\ol{M}\times\{1\}$ to $\ol{B}$. The embedding $i_1$ defines $i:\mf{M}_{} \to \mf{B}_{}$ and $\tilde{i}:\mf{M}_{Z,cl} \to \mf{B}_{Z,cl}$ in $\ulMCor$.

\[ \xymatrix{
\mf{M}_{Z,cl} \ar[r]^{\tilde{i}} \ar[d]  
& \mf{B}_{Z,cl} \ar[d] \\
\mf{M}_{} \ar[r]^i
& \mf{B}\\
} \]

The diagram gives us the morphism $\Ztr(\mf{M}\slash \mf{M}_{Z,cl}) \to \Ztr(\mf{B}\slash \mf{B}_{Z,cl})$ in $PSh(\ulMCor)$ where we write
\[ \Ztr(\mf{M}\slash \mf{M}_{Z,cl}) := coker\biggl ( \Ztr(\mf{M}_{Z,cl}) \to \Ztr(\mf{M}) \biggr ) \]
etc., in $PSh(\ulMCor)$. Note that since $\mf{M}_{Z,cl} \to \mf{M}$ are monomorphisms, the image of $\Ztr(\mf{M}\slash \mf{M}_{Z,cl})$ in $\ulMDM^\eff$ is the cone of the image of $\Ztr(\mf{M}_{Z,cl}) \to \Ztr(\mf{M})$. Composing with the isomorphism 
\[ \Ztr(\mf{B}\slash \mf{B}_{Z,cl}) \stackrel{\sim}{\leftarrow} \Ztr(\mf{E}\slash \mf{E}_{Z,cl}) = Th(N_ZM,cl) \]
from Theorem \ref{KS}, one gets a morphism:
\begin{equation*}
\beta(\mf{M}\slash Z,cl) : \ulM(\mf{M}_{}\slash \mf{M}_{Z,cl}) \to \Ztr(\mf{B}\slash \mf{B}_{Z,cl}) \to Th(N_ZM,cl).
\end{equation*}
We call this morphism the \emph{closed Gysin map} associated with $\mf{M}$ and $Z$.
\begin{lemma}\label{compati} We have the following.
\begin{enumerate} 
    \item[$(0)$] $\beta\bigl((\mathbb{A}^{1},\emptyset)\slash \{0\},cl\bigr): \ulM((\mathbb{A}^{1},\emptyset)\slash (\AA^1,\{0\})) \to Th(N_{\{0\}}(\AA^1,\emptyset),cl)$ is an isomorphism.

    \item[$(1)$] For any \'etale morphism $e:\mf{M}'=(\ol{M}',e^{*}M^{\infty}) \to \mf{M}$, set $\mf{Z}'=(e^{-1}\ol{Z},e^{*}Z^{\infty})$. Then the diagram 
\[
\xymatrix{
\ulM(\mf{M
}'_{}/{\mf{M}'_{Z',cl}}) \ar[rr]^{\beta(\mf{M}'\slash \mf{Z}',cl)} \ar[d]& & Th(N_{Z'}M',cl) \ar[d] \\
\ulM(\mf{M
}_{}/{\mf{M}_{Z,cl}}) \ar[rr]_{\beta(\mf{M}\slash \mf{Z},cl)} & &Th(N_{Z}M,cl)
}
\] 
commutes.

    \item[$(2)$] For any modulus pair $\mathfrak{Y}=(\ol{Y},Y^{\infty})$ with $\ol{Y}$ smooth and $Y^\infty$ a strict normal crossings divisor, we have
\[
\beta(\mf{M}\otimes \mf{Y}\slash \mf{Z} \otimes \mf{Y},cl) = \beta(\mf{M}\slash \mf{Z},cl) \otimes Id_{\underline{M}(\mf{Y})}.
\]
\end{enumerate}
\end{lemma}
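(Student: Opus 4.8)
The plan is to verify the four assertions of Lemma~\ref{compati} by unwinding the construction of $\beta(\mf{M}\slash\mf{Z},*)$ and reducing everything to functoriality of the blow-up and deformation-to-the-normal-cone constructions together with the already-established weighted blow-up triangle (Theorem~\ref{KS}). The morphism $\beta$ is the composite
$\ulM(\mf{M}/\mf{M}_{Z,*}) \to \Ztr(\mf{B}/\mf{B}_{Z,*}) \xleftarrow{\sim} \Ztr(\mf{E}/\mf{E}_{Z,*}) = Th(N_ZM,*)$,
so every property of $\beta$ is controlled by the behaviour of the three modulus pairs $\mf{M}$, $\mf{B}$, $\mf{E}$ (and their $Z$-decorated variants) under the relevant operation. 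First I would treat $(1)_*$: an étale morphism $e:\mf{M}'\to\mf{M}$ with $e^{-1}\ol{Z}\xrightarrow{\sim}\ol{Z}$ falls under Situation~\ref{situ2}, and pulling back the blow-up square of Notation~\ref{otimessitu} along $\ol{e}\times\id_{\PP^1}$ produces exactly the primed data of Notation~\ref{otimesnote}; hence the vertical maps in the displayed square are induced by the evident compatible system of maps on $\mf{M},\mf{B},\mf{E}$ (and the $Z,*$-variants), and the square commutes in $PSh(\ulMCor)$ already, hence a fortiori in $\ulMDM^\eff$. The one point requiring care is that the horizontal isomorphism $\Ztr(\mf{B}'/\mf{B}'_{Z',*})\xleftarrow{\sim}\Ztr(\mf{E}'/\mf{E}'_{Z',*})$ used to define $\beta(\mf{M}'/\mf{Z}',*)$ is the pullback of the one over $\mf{M}$ — but this is precisely the naturality built into Theorem~\ref{KS}, combined with Corollary~\ref{PEPP} and Proposition~\ref{etaleetale}, which already say the Thom spaces and the weighted blow-up triangles are compatible under such $e$.

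Next I would do $(2)_*$: tensoring Situation~\ref{situ} with a modulus pair $\mf{Y}=(\ol{Y},Y^\infty)$ (with $\ol{Y}$ smooth, $Y^\infty$ strict normal crossings) sends $\mf{M}$ to $\mf{M}\otimes\mf{Y}$, and because blow-up along $\ol{Z}\times\{0\}$ commutes with the flat base change $\ol{Y}\times - \to -$, we get $\ol{B}_{M\otimes Y}^{(Z\otimes Y)} = \ol{Y}\times_{\text{(twist)}}\ol{B}_M^{(Z)}$ and likewise $\mf{E}_{M\otimes Y} = \mf{Y}\otimes\mf{E}_M$ — this is exactly the identity $B^{(Y\otimes\{0\})}_{Y\otimes(\AA^1,\emptyset),*}=Y\otimes B^{(\{0\})}_{(\AA^1,\emptyset),*}$ already used in the proof of Theorem~\ref{base}. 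Since $\Ztr(-)$ and the cone construction are additive and $-\otimes\mf{Y}$ is (by construction of $\ulMCor$ and $\ulMDM^\eff$) a triangulated endofunctor, the whole composite defining $\beta$ commutes with $-\otimes\id_{\ulM(\mf{Y})}$; the displayed equation then holds on the nose. Again the only non-formal ingredient is that the isomorphism from Theorem~\ref{KS} is itself compatible with $\otimes\mf{Y}$, which follows because that isomorphism is deduced from the blow-up formula of \cite{KS19} and the weighted blow-up triangle, both of which were proved to be stable under $\otimes\mf{Y}$ (Theorem~\ref{base}).

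Finally, for the base cases $(0)_{op}$ and $(0)_{cl}$ I would compute directly in the one-dimensional toric situation $\ol{M}=\AA^1$, $\ol{Z}=\{0\}$, $M^\infty=\emptyset$. Here $\ol{B}$ is the blow-up of $\AA^1\times\PP^1$ at $(0,0)$, the exceptional divisor $E\cong\PP^1$, and $\mf{E}_{Z,*}$, $\mf{B}_{Z,*}$ are the explicit modulus pairs of Notation~\ref{otimessitu}; the relevant quotients $\Ztr(\mf{M}/\mf{M}_{Z,*})$, $\Ztr(\mf{B}/\mf{B}_{Z,*})$, $\Ztr(\mf{E}/\mf{E}_{Z,*})$ all fit into the split triangles of Proposition~\ref{4.3} (via the toric identifications in the proof of Theorem~\ref{AAAAA}, where $(T\setminus h_2, f\setminus(f\cap h_2))\cong(\AA^1,\emptyset)\otimes\bcube$), so $\beta$ is identified with a map of split triangles whose outer two terms it hits by identities; the five-lemma then forces $\beta$ to be an isomorphism. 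Concretely one checks that the composite $\ulM(\mf{M}/\mf{M}_{Z,*})\to\Ztr(\mf{B}/\mf{B}_{Z,*})$ is, after the contraction $\ulM((\PP^2,H_2))\simeq\ulM(\Spec k)$ of \cite[Lemma~10]{KS19}, the inverse of the isomorphism $i$ of Proposition~\ref{4.3}, whence $\beta$ inverts the isomorphism $\Ztr(\mf{E}/\mf{E}_{Z,*})\xrightarrow{\sim}\Ztr(\mf{B}/\mf{B}_{Z,*})$ composed with an isomorphism, hence is itself an isomorphism.

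The main obstacle I anticipate is bookkeeping rather than conceptual: making sure that in $(0)_*$ the specific map $\beta$ coming out of the general construction really is the distinguished isomorphism in the split triangle of Proposition~\ref{4.3} and not merely \emph{some} isomorphism — i.e., tracking the embedding $i_1:\ol{M}\times\{1\}\hookrightarrow\ol{B}$ through the toric blow-up chart identifications of the proof of Theorem~\ref{AAAAA} and verifying it matches the closed immersion $j$ (equivalently $i$) there. Everything else ($(1)_*$ and $(2)_*$) is formal naturality of constructions that have already been shown functorial in the earlier sections, so the only real work is this explicit one-dimensional identification.
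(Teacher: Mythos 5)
Your treatments of $(1)_*$ and $(2)_*$ are fine and coincide with the paper's proof: for $(1)_*$ one forms the blow-up $\ol{B}'$ of $\ol{M}'\times\PP^1$ along $e^{-1}\ol{Z}\times\{0\}$ and observes that the resulting cube of modulus pairs gives commutative squares in $PSh(\ulMCor)$ both for the $\mf{M}\to\mf{B}$ quotients and for the $\mf{E}\to\mf{B}$ quotients, so commutativity of the $\beta$-square is pure naturality (note the statement does not even require $e^{-1}\ol{Z}\to\ol{Z}$ to be an isomorphism, so your appeal to Situation~\ref{situ2}, Corollary~\ref{PEPP} and Proposition~\ref{etaleetale} is unnecessary, though harmless); for $(2)_*$ the only point is that the blow-up of $\ol{M}\times\ol{Y}\times\PP^1$ along $\ol{Z}\times\ol{Y}\times\{0\}$ is $\ol{B}\times\ol{Y}$, exactly as you say.

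The gap is in $(0)_{op}$ and $(0)_{cl}$, and it is not bookkeeping. Proposition~\ref{4.3} and Theorem~\ref{AAAAA} only concern the weighted blow-up triangle $\ulM(\mf{E}_{Z,*})\to\ulM(\mf{Z})\oplus\ulM(\mf{B}_{Z,*})\to\ulM(\mf{M}\otimes\bcube)$; the $1$-section $i_1:\ol{M}\times\{1\}\hookrightarrow\ol{B}$, which is what defines $\beta$, never appears in them. Consequently your five-lemma scheme has no triangle morphism in which $\beta$ occurs as one of the three maps, and it can at best show that source and target of $\beta$ are abstractly isomorphic, not that the specific composite $\ulM(\mf{M}/\mf{M}_{Z,*})\to\Ztr(\mf{B}/\mf{B}_{Z,*})\xleftarrow{\sim}\Ztr(\mf{E}/\mf{E}_{Z,*})$ is invertible. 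Your ``concrete check'' that this composite equals the inverse of the map $i$ of Proposition~\ref{4.3} does not even typecheck: $\beta$ relates cone objects $\ulM(\mf{M}/\mf{M}_{Z,*})$ and $\Ztr(\mf{B}/\mf{B}_{Z,*})$, whereas $i$ relates the plain motives $\ulM(E_{*})$ and $\ulM(B_{*})$. The paper closes exactly this hole by a different, genuinely geometric argument special to the one-dimensional case: the composite of the $1$-section $\AA^1\times\{1\}\hookrightarrow\ol{B}_{\{0\}}$ (resp. $(\AA^1\setminus\{0\})\times\{1\}$) with the retraction $\ol{B}_{\{0\}}\to\ol{E}_{\{0\}}$ is an \emph{open immersion}, so the resulting squares comparing $(\ulM(\AA^1,\{0\})\to\ulM(\AA^1,\emptyset))$, resp. $(\ulM(\AA^1\setminus\{0\},\emptyset)\to\ulM(\AA^1,\emptyset))$, with $(\ulM(\mf{E}_{\{0\},*})\to\ulM(\mf{E}))$ satisfy the hypotheses of Proposition~\ref{Fetexcion} and are therefore homotopy Cartesian in $\ulMDM^\eff$; this immediately identifies $\beta$ with an isomorphism on the horizontal cofibres. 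Some such geometric input locating the $1$-section relative to the exceptional divisor is indispensable, and your proposal does not supply it.
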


\begin{proof}
Part 1. We take
\[
\OL{B}':\text{blow-up of } \OL{M}' \times \mathbb{P}^{1} \text{ with along } e^{-1}\OL{Z} \times \{0\},
\]
and
\[
\mf{B}'=\bigl(\OL{B}',{q'}^{*}(e^{*}M^{\infty}\times \mathbb{P}^{1}) + {q'}^{*}(\OL{M}'\times \{\infty\})\bigr),
\]
\[
\mf{B}'_{\mf{Z}',cl}=\bigl(\OL{B}',{q'}^{*}(e^{*}M^{\infty}\times \mathbb{P}^{1}) + {q'}^{*}(\OL{M}'\times \{\infty\})+\widetilde{(\ol{Z}'\times \PP^1)}\bigr)
\]
Since the morphism $e$ is \'etale, $e^{-1}\OL{Z}$ is also smooth. Now there is a natural map $\mf{B}' \to \mf{B}$, and we have the following commutative diagram in $\ulMCor$.
\[\xymatrix@!=6pt{
& \mf{M}'_{\mf{Z}'} \ar[rr] \ar[dd]|\hole \ar[dl] & & \mf{M}'_{}\ar[dd]^{} \ar[dl] \\
\mf{M}_{\mf{Z}} \ar[dd] \ar[rr] &  & \mf{M}_{} \ar[dd]^(0.3){} & \\
& \mf{B}'_{\mf{Z}'} \ar[rr]|-\hole \ar[ld] & & \mf{B}'_{} \ar[ld] \\
 \mf{B}_{\mf{Z}} \ar[rr] & & \mf{B}_{}& \\
}
\]\normalsize
The diagram gives us the commutative diagram in $PSh(\ulMCor)$.
\[ \xymatrix{
\Ztr(\mf{M}'_{}\slash \mf{M}'_{\mf{Z}',cl}) \ar[r] \ar[d] & \Ztr(\mf{M}_{}\slash \mf{M}_{\mf{Z},cl}) \ar[d]\\
\Ztr(\mf{B}'_{}\slash \mf{B}'_{\mf{Z}',cl}) \ar[r] & \Ztr(\mf{B}_{}\slash \mf{B}_{\mf{Z},cl})
} \]
The same argument shows that the square
\[ \xymatrix{
\Ztr(\mf{E}'_{}\slash \mf{E}'_{\mf{Z}',cl}) \ar[r] \ar[d] & \Ztr(\mf{E}_{}\slash \mf{E}_{\mf{Z},cl}) \ar[d]\\
\Ztr(\mf{B}'_{}\slash \mf{B}'_{\mf{Z}',cl}) \ar[r] & \Ztr(\mf{B}_{}\slash \mf{B}_{\mf{Z}.cl})
} \]
is commutative.

Part 2. The blow-up of $\OL{M}\times\OL{Y}\times\mathbb{P}^{1}$  along $\OL{Z}\times\OL{Y}\times\{0\}$ is isomorphic to $\OL{B} \times \OL{Y}$, so the proof is completed.

Part 0. Set $\eta_{op}$ resp. $\eta_{cl}$ to be the composition of the $1$-section 
$\AA^1 \times \{1\} \hookrightarrow  \ol{B}_{\{0\}}$ 
resp. 
$(\AA^1 \setminus \{0\}) \times \{1\} \hookrightarrow  \ol{B}_{\{0\}}$, 
and the retraction $\ol{B}_{\{0\}}\to \ol{E}_{\{0\}}$ (see the diagrams below on the left). Note that these are open immersions. Let $\eta_{0,cl}, \eta_1$ be the induced morphisms on modulus pairs (see the square below on the right).
\[ 
\xymatrix{
& E_{\{0\},cl}  \ar[d] \ar[rd]&\\
\AA^1\ar[r]^{i}\ar[rd]& B_{\{0\},cl} \ar[r]  \ar[d]&   E_{\{0\},cl} 
&
\ulM(\AA^1,\{0\}) \ar[r] \ar[d]^{\eta_{1.cl}}& \ulM(\AA^1,\emptyset)\ar[d]^{\eta_{0}}
\\
&(\AA^1,\emptyset)\otimes \bcube  &
&
\ulM(\mf{E}_{\{0\},cl})\ar[r]& \ulM(\mf{E}_{})
} \]
Since both of right side squares satisfy the condition of Proposition \ref{Fetexcion}, these squares are homotopy Cartesian in $\ulMDM^\eff$.    
\end{proof}

\section{Proof of main theorems}

In this section, we use the notation of Section 5.1, and we prove that the Gysin maps defined in Section 5.2
\[
\beta(\fM\slash \fZ,cl):
\ulM(\fM_{}/\fM_{\fZ,cl}) \to 
Th(N_ZM,cl),
\]
are isomorphisms. 

\begin{lemma}\label{locloc}
The Gysin maps $\beta(\fM\slash \fZ,*)$ is an isomorphism if there is an open Zariski cover $\{\overline{V}_i \to \overline{M}\}_{i = 1}^l$ such that for all $i$, the Gysin maps $\beta(%
(\overline{V},\overline{V} \cap M^\infty)\slash %
(\overline{V} \cap \overline{Z}, \overline{V} \cap M^\infty \cap \overline{Z}))$ %
associated to the intersections $\overline{V} = \cap_{j \in J} \overline{V}_j$ are isomorphisms for all nonempty $J \subseteq I$.
\end{lemma}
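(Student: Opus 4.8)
The plan is to prove Lemma~\ref{locloc} by a Mayer--Vietoris induction on the number $l$ of open subschemes in the cover, packaging the bookkeeping through Lemma~\ref{nine} (the nine lemma). The map $\beta(\fM/\fZ,*)$ is by construction a morphism of cones, fitting into a map of distinguished triangles whose first two terms are $\ulM(\fM)$, $\ulM(\fM_{\fZ,*})$ on one side and $\ulM(\mf{B})$, $\ulM(\mf{B}_{Z,*})$ on the other (using the isomorphism $\ulM(\mf{B}/\mf{B}_{Z,*}) \simeq Th(N_ZM,*)$ from Theorem~\ref{KS}). By compatibility $(1)_*$ of Lemma~\ref{compati}, for any open $\ol{V} \subseteq \ol{M}$ with $\ol{V}\cap\ol{Z}$ playing the role of the center, the map $\beta$ is functorial in \'etale (in particular open) pullbacks; and by Mayer--Vietoris for $\ulMDM^\eff$ (the (MV) relation), all the objects involved --- $\ulM(\fM)$, $\ulM(\fM_{\fZ,*})$, $\ulM(\mf{B})$, $\ulM(\mf{B}_{Z,*})$, and hence also $Th(N_ZM,*)$ and $\ulM(\fM/\fM_{\fZ,*})$ --- satisfy descent for the Zariski cover $\{\ol{V}_i\}$.

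Concretely, first I would treat the case $l=2$. Write $\ol{M} = \ol{V}_1 \cup \ol{V}_2$ and $\ol{V}_{12} = \ol{V}_1 \cap \ol{V}_2$. Applying $\beta$ to $\ol{M}$, $\ol{V}_1$, $\ol{V}_2$, $\ol{V}_{12}$ gives a commutative diagram whose rows are the four Mayer--Vietoris triangles (one for the source $\ulM(\fM/\fM_{\fZ,*})$, one for the target $Th(N_ZM,*)$) and whose columns are the maps $\beta$ for $\ol{V}_{12}$, $\ol{V}_1 \oplus \ol{V}_2$, and $\ol{M}$. Since by hypothesis $\beta$ is an isomorphism for $\ol{V}_1$, $\ol{V}_2$, and $\ol{V}_{12}$, two of the three columns of cones vanish, so by the nine lemma (Lemma~\ref{nine}, applied with $\Phi$ the cone functor, or simply by the $3\times 3$ lemma for triangulated categories) the cone of the third column $\beta(\fM/\fZ,*)$ vanishes, i.e.\ $\beta(\fM/\fZ,*)$ is an isomorphism. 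For the inductive step with $l > 2$, set $\ol{W} = \ol{V}_1 \cup \cdots \cup \ol{V}_{l-1}$ and $\ol{V} = \ol{V}_l$; the cover $\{\ol{W},\ol{V}\}$ of $\ol{M}$ together with the cover $\{\ol{V}_i \cap \ol{V}_l\}_{i<l}$ of $\ol{W}\cap\ol{V}$ reduces, via the inductive hypothesis applied to $\ol{W}$ and to $\ol{W}\cap\ol{V}$ (whose relevant intersections $\cap_{j\in J}\ol{V}_j$ are among those for which $\beta$ is assumed to be an isomorphism), to the $l=2$ case.

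One point that needs care, and which I expect to be the main technical obstacle, is ensuring that every object in sight genuinely satisfies Mayer--Vietoris for the given Zariski cover --- including the \emph{quotient} presheaves $\Ztr(\fM/\fM_{\fZ,*})$ and $\Ztr(\mf{B}/\mf{B}_{Z,*})$, not just the representables. This follows because $\Ztr$ of an elementary Nisnevich (in particular Zariski) square becomes homotopy Cartesian in $\ulMDM^\eff$ by the (MV) relation, and cones of maps of homotopy-Cartesian squares are again homotopy-Cartesian; but one must check that the modulus data restricts correctly, i.e.\ that $\fM_{\fZ,*}|_{\ol{V}} = (\ol{V}\cap\ol{M},\dots)_{\ol{V}\cap\ol{Z},*}$ and similarly for $\mf{B}$ --- this is where the explicit compatibility in $(1)_*$ of Lemma~\ref{compati} is used, since that lemma records precisely that $\beta$ commutes with \'etale (hence open) base change and so assembles into a morphism of Mayer--Vietoris triangles. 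Once this descent is in place, the rest is the formal nine-lemma argument above.
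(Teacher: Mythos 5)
Your proposal matches the paper's argument: reduce to the two-set cover case, use the nine lemma (Lemma~\ref{nine}) to produce Mayer--Vietoris distinguished triangles for both the relative motives $\ulM(\fM/\fM_{\fZ,*})$ and the Thom spaces, invoke $(1)_*$ of Lemma~\ref{compati} to assemble the $\beta$'s into a morphism of these triangles, and conclude by the two-out-of-three property for isomorphisms. The only difference is cosmetic: you spell out the $l>2$ induction and the descent checks that the paper leaves implicit.
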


\begin{proof}
By induction on $l$ it suffices to consider the $l = 2$ case. We take an open covering $\OL{V}_1\cup\OL{V}_2 = \OL{M}$. Now we set 
\[
\xymatrix{
\mf{V}_i=(\OL{V}_i,\OL{V}_i\cap M^{\infty}), & {\mf{V}_{\fZ,i,cl}}=(\OL{V}_i, \OL{V}_i \cap M^{\infty} + \OL{V}_i \cap \ol{Z}),
}
\]
\[
\xymatrix{
\OL{V}_{12}=\OL{V}_1 \cap \OL{V}_2, & \mf{V}_{12}=(\OL{V}_{12},\OL{V}_{12}\cap M^{\infty}), 
}
\] 
\[
{\mf{V}_{\fZ,12,cl}} =(\OL{V}_{12}, \OL{V}_{12} \cap M^{\infty} + \OL{V}_{12} \cap \ol{Z}). 
\]

We have the following diagram in $PSh(\ulMCor)$,\scriptsize
\[\xymatrix{
&0 \ar[d] &0 \ar[d] &0 \ar[d] &\\ 
 & 
\Zt({\mf{V}_{\fZ,12,*}}) \ar[r] \ar[d]^{i_{12}^*} &  
\Zt({\mf{V}_{\fZ,1,*}}) \oplus 
\Zt({\mf{V}_{\fZ,2,*}}) \ar[d]^{i_1^{*} \oplus i_2^{*}}\ar[r] &  
\Zt(\fM_{\fZ,*}) \ar[d]^{i_{\fM}}  & \\
 & 
\Zt(\mf{V}_{12})\ar[r] \ar[d] &  
\Zt(\mf{V}_{1}) \oplus 
\Zt(\mf{V}_{2}) \ar[d] \ar[r] &  
\Zt(\fM_{}) \ar[d]  & \\
 &  \text{Coker}(i_{12}^*) \ar[r] \ar[d] &   \text{Coker}({i_{1}^*}) \oplus \text{Coker}({i_{2}^*}) \ar[r] \ar[d]&  \text{Coker}(i_{\fM}^*)  \ar[d] &  \\
&0&0&0&
}
\]\normalsize
where the compositions of all columns and the two top rows are zero, and the bottom row maps are uniquely determined by the middle row maps.

By Lemma \ref{nine} we get the following distinguish triangle in $\ulMDM^\eff$
\[
\ulM(\mf{V}_{\fZ,12,*}/{\mf{V}_{\fZ,12,*}}) \to 
\ulM(\mf{V}_{\fZ,1,*}/{\mf{V}_{\fZ,1,*}}) \oplus 
\ulM(\mf{V}_{\fZ,2,*}/{\mf{V}_{\fZ,2,*}}) \to 
\ulM(\fM_{\fZ}/{\fM_{\fZ}}) \xrightarrow{+}.
\]

Same argument can be applied for Thom space, so we obtain the following distinguish triangle in $\ulMDM^\eff$ 
\[
Th(N_{Z_{12}}\mf{V}_{12},*) \to 
Th(N_{Z_{1}}\mf{V}_{1},*) \oplus 
Th(N_{Z_{2}}\mf{V}_{2},*) \to 
Th(N_{Z}\mf{M},*) \xrightarrow{+}.
\]

By Lemma~\ref{compati}, we know that Gysin maps are compatible with open immersions, so the proof follows from the triangulated category axioms.  
\end{proof}

\begin{lemma} \label{NOREFCOR}
In the situation Theorem \ref{PEPEPE},  $\beta((\ol{N},N^\infty)\slash (f^{-1}\ol{Z},Z'^\infty),cl)$ is an isomorphism if and only if $\beta((\ol{M},M^\infty)\slash (\ol{Z},Z^\infty),cl) $ is isomorphism.
\end{lemma}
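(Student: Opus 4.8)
The plan is to reduce the statement to the excision results of Section~\ref{sec:excision}, using the naturality of the Gysin map with respect to \'etale morphisms recorded in Lemma~\ref{compati}$(1)_*$. Write $\mf N = (\ol N, N^\infty)$, $\mf Z' = (f^{-1}\ol Z, Z'^\infty)$, $\mf M = (\ol M, M^\infty)$ and $\mf Z = (\ol Z, Z^\infty)$. Applying Lemma~\ref{compati}$(1)_*$ to the \'etale morphism $f$ produces a commutative square
\[
\xymatrix{
\ulM(\mf N/\mf N_{\mf Z',*}) \ar[r] \ar[d] & Th(N_{f^{-1}\ol Z}N,*) \ar[d] \\
\ulM(\mf M/\mf M_{\mf Z,*}) \ar[r] & Th(N_{\ol Z}M,*)
}
\]
in $\ulMDM^\eff$, whose horizontal maps are $\beta(\mf N/\mf Z',*)$ and $\beta(\mf M/\mf Z,*)$ and whose vertical maps are induced by $\ol f$. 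Hence it suffices to show that both vertical maps are isomorphisms in $\ulMDM^\eff$; commutativity of the square then shows $\beta(\mf N/\mf Z',*)$ is an isomorphism if and only if $\beta(\mf M/\mf Z,*)$ is.

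The right vertical map is an isomorphism by Corollary~\ref{PEPP}: the isomorphism produced there is the morphism of Thom spaces induced by $\ol f$ on the compactified normal bundles, and this is the same morphism $Th(N_{f^{-1}\ol Z}N,*)\to Th(N_{\ol Z}M,*)$ that appears in the square of Lemma~\ref{compati}$(1)_*$.

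For the left vertical map I would spell out the quotient presheaves. Since $\mf M_{\mf Z,*}\to\mf M$ is a monomorphism of presheaves, $\ulM(\mf M/\mf M_{\mf Z,*})$ is the cone of $\ulM(\mf M_{\mf Z,*})\to\ulM(\mf M)$, as noted just before Lemma~\ref{compati}. When $*=op$ we have $\mf M_{\mf Z,op} = (\ol M\setminus\ol Z,\, M^\infty|_{\ol M\setminus\ol Z})$, so $\Ztr(\mf M/\mf M_{\mf Z,op})$ is the cokernel presheaf $C^M_{0Z}$ of Section~\ref{sec:excision}, and likewise $\Ztr(\mf N/\mf N_{\mf Z',op}) = C^N_{0f^{-1}Z}$; the left vertical map is then the natural map $C^N_{0f^{-1}Z}\to C^M_{0Z}$, an isomorphism in $\ulMDM^\eff$ by Proposition~\ref{Fetexcion} in the case $n=0$. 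When $*=cl$ we have $\mf M_{\mf Z,cl} = (\ol M, M^\infty+\ol Z) = (\MpMnZ)$ with $n=1$, so $\Ztr(\mf M/\mf M_{\mf Z,cl})$ is the cokernel $\Coker(i_M)$ of the monomorphism $i_M$ of Theorem~\ref{PEPEPE} with $(n,m)=(1,0)$, and similarly $\Ztr(\mf N/\mf N_{\mf Z',cl}) = \Coker(i_N)$; the left vertical map is the comparison map $\Coker(i_N)\to\Coker(i_M)$, an isomorphism in $\ulMDM^\eff$ by Theorem~\ref{PEPEPE}.

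I do not expect a genuine difficulty: the lemma is a formal consequence of the excision statements of Section~\ref{sec:excision} together with the \'etale functoriality $(1)_*$ of Lemma~\ref{compati}. The one point that needs care is the bookkeeping above, namely the identifications $\ulM(\mf M/\mf M_{\mf Z,op})\simeq C^M_{0Z}$ and $\ulM(\mf M/\mf M_{\mf Z,cl})\simeq \Coker(i_M)$, the verification that under these identifications the left vertical map is exactly the $\ol f$-induced map treated in Proposition~\ref{Fetexcion}, respectively Theorem~\ref{PEPEPE}, and the analogous check on the Thom-space side that the isomorphism of Corollary~\ref{PEPP} is the $\ol f$-induced morphism occurring in Lemma~\ref{compati}.
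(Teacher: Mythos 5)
Your proposal is correct and follows essentially the same route as the paper: the paper's proof also invokes the commutative square from Lemma~\ref{compati}$(1)_*$ and then cites Theorem~\ref{PEPEPE} and Corollary~\ref{PEPP} for the vertical isomorphisms, concluding by two-out-of-three. Your write-up merely makes explicit the identifications $\Ztr(\mf M/\mf M_{\mf Z,op})\simeq C^M_{0Z}$ (handled by Proposition~\ref{Fetexcion} with $n=0$) and $\Ztr(\mf M/\mf M_{\mf Z,cl})\simeq\Coker(i_M)$ (handled by Theorem~\ref{PEPEPE}), which the paper leaves implicit.
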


\begin{proof}
by Lemma 3.2. (1), we have the following commutative diagram
\[
\xymatrix{
\ulM(\mf{N
}_{}/{\mf{N}_{f^{-1}\fZ,cl}}) \ar[r] \ar[d] & Th(N_{f^{-1}Z}N,cl) \ar[d] \\
\ulM(\mf{M
}_{}/{\fM_{\fZ,cl}}) \ar[r] & Th(N_ZM,cl)
}
\]
where the vertical maps are isomorphisms by Theorem \ref{PEPEPE} and Corollary \ref{PEPP}. So if one of the horizon maps is an isomorphism then the other is also an isomorphism.
\end{proof}


Now we have a proof of the main theorem.

\begin{proof}[Proof that the Gysin triangles are distinguished in $\ulMDM^\eff$.] It suffices to show the Gysin morphisms are isomorphisms. By Lemma~\ref{locloc}, Lemma~\ref{NOREFCOR}, and Lemma~\ref{decompose}, we can assume that there is an {\'e}tale map $\OL{f}: \OL{M} \to \mathbb{A}^m$ such that $M^\infty = \OL{f}^*E$ and $\OL{Z} = \OL{f}^*(\mathbb{A}^{m-1} \times \{0\})$ where 
\[ \deAm = (\mbAm,E), \]
and we write %
$E_{0} = %
E \times_{\mathbb{A}^{m}} (\mathbb{A}^{m-1}\times\{0\})$
so
\[ \deAmo=(\mathbb{A}^{m-1} {\times} \{0\},E_{0}). \]

Now we have a Cartesian cubic diagram\scriptsize
\[\xymatrix{
& Z^{\infty} \ar[rr] \ar[dd]|-\hole \ar[dl] & & \OL{Z} \ar[dd]^{\OL{f}_{Z}} \ar[dl] \\
M^{\infty} \ar[dd] \ar[rr] &  & \OL{M} \ar[dd]^(0.3){\OL{f}} & \\
& E_{0} \ar[rr]|-\hole \ar[ld] & & \mathbb{A}^{m-1}\times \{0\} \ar[ld] \\
E \ar[rr] & & \mbAm & \\
}
\]\normalsize

\newcommand{\Mpp}{(\Spec k, \emptyset)}

By the above diagram, we know $f_{Z}:\MpZ \to (\mathbb{A}^{m-1}\times \{0\},E_{0})$ is a minimal \'etale map. Now we consider the fibre product,

\[
\ol{X} ~:=~  \ol{M} \times_{\mathbb{A}^m} (\ol{Z} \times_{\Spec k} \mathbb{A}^1 )
\]
and 
\[
X^\infty ~:=~ \ol{\pi}^*M^\infty,
\]
where $\ol{\pi}$ is a canonical morphism $\ol{X} \to \ol{M}$. Now by \cite[Theorem 4.10]{SV00}, we have a  diagram $(\Omega)$ in $\Sm(k)$,\scriptsize
\[\xymatrix{
\OL{Z} \ \ar@{}[drr]|\square \ar[rr] \ar[d]_{\Delta_{\OL{Z}/\mathbb{A}^{m-1}}} & & \OL{X}' \ar[rrd]^{p_{2}} \ar[d]_{i} \ar@/^18pt/[dd]^(0.6){p_1}|\hole& & (\Omega) \\
\OL{Z}\times_{\mathbb{A}^{m-1}} \OL{Z} \ar[rr] \ar@{}[drr]|\square \ar[d] & & \OL{X} \ar[d]_{\OL{\pi}} \ar[rr]^{p}  \ar@{}[drr]|\square & & \OL{Z}\times \mbA \ar[d]^{\OL{f}_Z \times id_{\mbA}} \\
\OL{Z} \ar[rr] & & \OL{M} \ar[rr]^{} \ar[rr] & &  \mbAm  \\
}
\]\normalsize
where $i:\OL{X}' \to \OL{X}$ is an open immersion, and $p_{2}^{-1}(\OL{Z}\times \{0\})= \OL{Z}$. By Lemma~\ref{NOREFCOR}, $\beta(\mf{M}\slash\mf{M}_{\mf{Z}},cl)$ is an isomorphism if and only if $\beta_{cl}: (\ulM\Bigl(\MpZ \otimes (\mbA,\emptyset) \Bigr) / \ulM\Bigl(\MpZ \otimes (\mbA,\{0\})\Bigr)) \to Th(N_{Z}\AA^1_Z,cl)$ is an isomorphism. By Lemma \ref{compati} (2) this $\beta_{cl}$ is the image of $ \ulM((\AA^1, \emptyset) / (\AA^1, \{0\})) \to Th(N_{\{0\}}\AA^1,cl)$ under $(\ol{Z}, Z^\infty) \otimes -$. But this is an isomorphism by Lemma \ref{compati}(0). %
%
%
\end{proof}



\section{Application} 

Heuristically, the motive with modulus $M(\ol{X}, X^\infty)$ is a place holder which represents the cohomology of $X^\circ = \ol{X} \setminus X^\infty$ whose ramification along the support of $X^\infty$ is bounded by the multiplicities of $X^\infty$. In particular, the case when $X^\infty$ is reduced corresponds to \emph{tamely} ramified cohomology classes. On the other hand, there are concrete connections between tame class field theory and Voevodsky's $\DM^\eff$, cf., the relationship between the tame fundamental group and Suslin homology demonstrated by Geisser, Schmidt, and Spei\ss. In this section we show that these two points of view are compatible.

\begin{thm}\label{redcase}
Let $X$ be a smooth variety over $k$ which has a compactification $\ol{X}$ such that $\ol{X}$ is smooth and $|\ol{X}\backslash X|$ is a strict normal crossings divisor on $\ol{X}$. Then the unit
\[
\ulM(\ol{X},|\ol{X}\backslash X|_{\red}) \to \ul{\omega}^{\eff}({\bf{M}}(X))
\]
of the adjunction $\ul{\omega}_{\eff}:\ulMDM^\eff \rightleftarrows\DM^{\eff}:\ul{\omega}^\eff$ is an isomorphism.
\end{thm}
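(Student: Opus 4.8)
The plan is to reduce the statement, by a limiting/filtered-colimit argument, to the case treated by the tame Gysin triangle of Theorem~\ref{GYSIN}, and then to induct on the number of components of $|\ol{X}\setminus X|$. First I would reduce to the case where $D:=|\ol{X}\setminus X|$ is a \emph{prime} (i.e.\ smooth integral) divisor. This is the key induction: write $D=D_1+\dots+D_r$ with each $D_i$ smooth, set $D'=D_1+\dots+D_{r-1}$, $U'=\ol{X}\setminus D'$, and apply Theorem~\ref{GYSIN} with the modulus pair $(\ol{X},D'_\red)$ and the prime Cartier divisor $\ol{Z}=D_r$; note $|D'_\red+D_r|=D_\red$ is strict normal crossings by hypothesis, so Situation~\ref{situ} applies. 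The Gysin triangle reads
\[
\ulM(\ol{X},D_\red)\to \ulM(\ol{X},D'_\red)\to \ulM(D_r, Z^\infty)(1)[2]\xrightarrow{+},
\]
where $Z^\infty = D'_\red\cdot D_r$, which is exactly $|D_r\setminus(D_r\setminus(D_r\cap D'))|_\red$ made into a strict normal crossings divisor on $D_r$ (since $D$ is snc, its components meet transversally). Applying the triangulated functor $\ul{\omega}^\eff\circ\mathbf{M}$ to the classical Gysin triangle \cite[Prop.3.5.4]{V00b} for $(X',X'\setminus Z^\circ)=(\ol{X}\setminus D',\cdot)$ — more precisely for the open immersion $U'\setminus D_r\hookrightarrow U'$ with smooth closed complement $D_r^\circ = D_r\setminus(D_r\cap D')$ — gives a compatible triangle
\[
\mathbf{M}(X)\to \mathbf{M}(U')\to \mathbf{M}(D_r^\circ)(1)[2]\xrightarrow{+}
\]
in $\DM^\eff$. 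The unit maps assemble into a morphism of triangles: on the left term we want the isomorphism we are proving, on the middle term it is the inductive hypothesis for $(\ol{X},D')$, and on the right term it is (a Tate twist and shift of) the inductive hypothesis for the pair $(D_r, D_r\cap D')$ inside the ambient $D_r$ — this is legitimate since $D_r$ is smooth and $D_r\cap D'$ is strict normal crossings on it, and the number of components has dropped. By the five lemma in the triangulated category, the left vertical map is then an isomorphism.

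The base case of the induction is $r=0$, i.e.\ $\ol{X}=X$ proper smooth: then $\ulM(\ol{X},\emptyset)$ is a proper modulus pair and the statement is that $\ulM(\ol X,\emptyset)\to\ul\omega^\eff\mathbf{M}(\ol X)$ is an isomorphism, which should follow from the construction of the adjunction $(\ul\omega_\eff,\ul\omega^\eff)$ (the counit $\ul\omega_\eff\ul\omega^\eff\to\mathrm{id}$ is an isomorphism on representables of proper smooth schemes, equivalently $\ul\omega^\eff$ is fully faithful on that part, cf.\ the construction in \cite{KMSY19a}). I would also need to check at the start of the induction that the two Gysin triangles — motivic-with-modulus (Theorem~\ref{GYSIN}) and Voevodsky (\cite[Prop.3.5.4]{V00b}) — are \emph{compatible} under $\ul\omega^\eff\circ\mathbf M$: one has $\ul\omega_\eff(\ulM(\ol M,M^\infty))=\mathbf M(M^\circ)$ by construction of $\ul\omega$, so by adjunction the unit $\ulM(\ol X, D_\red)\to\ul\omega^\eff\mathbf M(X)$ corresponds to $\mathbf M(X)=\ul\omega_\eff\ulM(\ol X,D_\red)\to\mathbf M(X)$, and I would need that this latter map is the identity and that the third vertices match up to the canonical identification $\ul\omega_\eff(\ulM(D_r,Z^\infty)(1)[2])\cong\mathbf M(D_r^\circ)(1)[2]$.

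The main obstacle I expect is precisely this \textbf{compatibility of the two Gysin triangles}: Theorem~\ref{GYSIN}'s third term $\ulM(\ol Z,Z^\infty)(1)[2]$ is \emph{defined} via the Thom space construction of Definition~\ref{Thom} and the blow-up formulas of Section~\ref{sec:blowupFormula}, whereas Voevodsky's third term arises from deformation to the normal cone in $\DM^\eff$; to run the five-lemma argument I need an honest \emph{morphism of distinguished triangles}, not merely an abstract isomorphism of the three vertices. Establishing this requires tracing through the construction of the Gysin map $\beta(\fM/\fZ,*)$ in Section~\ref{sec:tameGysinMapConstruction} and checking that $\ul\omega_\eff$ carries it to Voevodsky's Gysin map — this is plausible because both are built from the same deformation-to-the-normal-cone diagram $\ol B$, and $\ul\omega_\eff$ commutes with blow-ups along smooth centres away from the modulus, but it must be verified. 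A secondary, more routine issue is checking that $\ul\omega^\eff$ commutes with the relevant Tate twists and shifts and that the projective bundle formula identification $Th(N_ZM,cl)\cong\ulM(\ol Z,Z^\infty)(1)[2]$ is compatible with its Voevodsky counterpart; and, if one wants full generality, handling the passage from the snc situation to arbitrary compactifications via Remark~\ref{birational} and resolution of singularities, though the statement as phrased already assumes $\ol X$ smooth with snc boundary, so this last point does not arise.
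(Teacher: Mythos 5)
Your proposal is essentially the paper's own proof: induction on the number of components of the boundary divisor, comparing the tame Gysin triangle of Theorem~\ref{GYSIN} with $\ul{\omega}^\eff$ applied to Voevodsky's Gysin triangle via the unit of the adjunction, and concluding by the five lemma, with the base case resting on the isomorphism $\ulM(\ol{X},\emptyset)\simeq\ul{\omega}^\eff\mathbf{M}(\ol{X})$ for proper smooth $\ol{X}$ (which the paper cites as forthcoming work of Kahn--Miyazaki--Saito--Yamazaki). The compatibility of the two Gysin maps under $\ul{\omega}_\eff$ that you flag as the main obstacle is exactly the paper's Lemma~\ref{comp}, which it disposes of by noting that the tame Gysin map is built purely from geometric morphisms sent by $\ul{\omega}_\eff$ to the corresponding classical ones.
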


\begin{lemma}\label{comp}
The functor $\ul{\omega}_\eff$ sends the tame Gysin map $g_{\mf{Z}}\mf{M}$ to Gysin map $g_{Z^{\circ}}M^{\circ}$ of \cite[Thm.3.5.4]{V00b}.
\end{lemma}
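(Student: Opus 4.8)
The plan is to unwind the construction of the tame Gysin map from Section~\ref{sec:tameGysinMapConstruction} and push it through $\ul{\omega}_\eff$ term by term. Recall that $\beta(\fM/\fZ,*)$ is the composite
\[
\ulM(\fM/\fM_{\fZ,*}) \xrightarrow{\tilde i} \ulM(\mf{B}/\mf{B}_{\fZ,*}) \xleftarrow{\sim} \ulM(\mf{E}/\mf{E}_{\fZ,*}) = Th(N_ZM,*),
\]
coming from the deformation $\ol{B}\to\ol{M}\times\PP^1$ (the blow-up along $\ol{Z}\times\{0\}$): $\tilde i$ is induced by the $1$-section, and the displayed isomorphism is the one of Theorem~\ref{KS}, obtained from the exceptional-divisor inclusion. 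Since $g_{\fZ}\fM$ is $\beta(\fM/\fZ,*)$ followed by the projective-bundle identification $Th(N_ZM,*)\simeq\ulM(\fZ)(1)[2]$, it suffices to check that $\ul{\omega}_\eff$ sends this whole diagram to the deformation-to-the-normal-cone construction of $g_{Z^\circ}M^\circ$ of \cite[Prop.~3.5.4]{V00b}.

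Using that $\ul{\omega}_\eff\ulM(\ol{N},N^\infty)\simeq\M(\ol{N}\setminus N^\infty)$, that this is compatible with open immersions, and that blow-ups commute with restriction to an open subscheme, one computes (writing $B':=Bl_{Z^\circ\times\{0\}}(M^\circ\times\AA^1)$ and $N':=N_{Z^\circ}M^\circ$): $\ul{\omega}_\eff\ulM(\fM)\simeq\M(M^\circ)$, $\ul{\omega}_\eff\ulM(\fM_{\fZ,cl})\simeq\ul{\omega}_\eff\ulM(\fM_{\fZ,op})\simeq\M(M^\circ\setminus Z^\circ)$, $\ul{\omega}_\eff\ulM(\mf{B})\simeq\M(B')$, $\ul{\omega}_\eff\ulM(\mf{B}_{\fZ,cl})\simeq\ul{\omega}_\eff\ulM(\mf{B}_{\fZ,op})\simeq\M(B'\setminus\widetilde{Z^\circ\times\AA^1})$, $\ul{\omega}_\eff\ulM(\mf{E})\simeq\M(\PP(N'\oplus\mathcal{O}))$, and $\ul{\omega}_\eff\ulM(\mf{E}_{\fZ,*})\simeq\M(\PP(N'\oplus\mathcal{O})\setminus\PP(0\oplus\mathcal{O}))$, where $\PP(0\oplus\mathcal{O})$ is the zero section of $N'$. (Both the $cl$- and the $op$-variant collapse to the same scheme, in accordance with the fact that $\DM^\eff$ has a single Gysin triangle.) Voevodsky's deformation space is $\mathcal{D}=B'\setminus\widetilde{M^\circ\times\{0\}}$; since the strict transforms of $M^\circ\times\{0\}$ and of $Z^\circ\times\AA^1$ in $B'$ are disjoint, Nisnevich excision identifies $\M(B')/\M(B'\setminus\widetilde{Z^\circ\times\AA^1})$ with the same relative motive formed from $\mathcal{D}$, and both $i_1$ and the exceptional divisor factor through $\mathcal{D}$. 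Hence $\ul{\omega}_\eff\beta(\fM/\fZ,*)$ is precisely the zig-zag $\M(M^\circ)/\M(M^\circ\setminus Z^\circ)\to\M(\mathcal{D})/\M(\mathcal{D}\setminus\widetilde{Z^\circ\times\AA^1})\leftarrow Th(N')$ (specialisation at $t=1$, resp.\ at $t=0$) defining Voevodsky's Gysin map, the right-hand arrow being $\ul{\omega}_\eff$ of the isomorphism of Theorem~\ref{KS}, which on the $\DM^\eff$ side is the homotopy-purity isomorphism of \cite[Prop.~3.5.4]{V00b}; here the two models $\M(\PP(N'\oplus\mathcal{O}))/\M(\PP(N'\oplus\mathcal{O})\setminus\PP(0\oplus\mathcal{O}))$ and $\M(N')/\M(N'\setminus 0_{Z^\circ})$ of the Thom space are canonically identified and $\ul{\omega}_\eff$ is compatible with the projective bundle formula trivialising them. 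This gives $\ul{\omega}_\eff(g_{\fZ}\fM)=g_{Z^\circ}M^\circ$.

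A second route, avoiding any unwinding of \cite[Prop.~3.5.4]{V00b}, is to repeat the reductions used to prove the main theorem: both $g_{\fZ}\fM$ and $g_{Z^\circ}M^\circ$ are compatible with Zariski covers of $\ol{M}$ (Lemma~\ref{locloc} and Mayer-Vietoris, together with the classical analogue in $\DM^\eff$), with minimal \'etale maps inducing an isomorphism on $\ol{Z}$ (Lemma~\ref{compati}(1), Lemma~\ref{NOREFCOR}, and its analogue), and with external products (Lemma~\ref{compati}(2)), while $\ul{\omega}_\eff$ is monoidal (since $\udo(\fM\otimes\fZ)=\udo\fM\times\udo\fZ$) and respects all of this; by Lemma~\ref{decompose} and Lemma~\ref{compati}(2) one is reduced to $\fM=(\AA^1,\emptyset)$, $\fZ=\{0\}$, where Lemma~\ref{compati}(0) describes $\beta$ explicitly and $\ul{\omega}_\eff$ of it is computed by hand. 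I expect the single genuinely non-formal point, in either route, to be the one obstacle: matching the isomorphism produced by $\ul{\omega}_\eff$ of Theorem~\ref{KS} on the nose --- orientation and sign included --- with the homotopy-purity/Thom isomorphism that normalises $g_{Z^\circ}M^\circ$ in \cite[Prop.~3.5.4]{V00b}; the identifications above, and the excision step discarding $\widetilde{M^\circ\times\{0\}}$, are routine bookkeeping.
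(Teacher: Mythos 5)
Your proposal is correct, and your first route is exactly the paper's argument: the paper's entire proof is the one-sentence observation that the tame Gysin map is a zig-zag of morphisms induced by genuine geometric morphisms of modulus pairs, so $\ul{\omega}_\eff$ (which takes interiors) carries it termwise to the deformation-to-the-normal-cone diagram defining Voevodsky's Gysin map. Your detailed unwinding, including the excision step and the flagged normalization of the Thom/purity isomorphism, is a more careful expansion than anything the paper records.
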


\begin{proof}[Proof of Lemma \ref{comp}]
By using excision \cite[Proposition~5.18]{V00c}, Voevodsky’s construction of the Gysin map \cite{V00a} can be restated in terms of the deformation space obtained by blowing up $Z^\circ \times\{0\}$ in $X^\circ \times \PP^1$. The definition of the tame Gysin map is given only by geometrical morphisms, our construction corresponds to Voevodsky's construction under the functor $\ul\omega_\eff$.
\end{proof}

\begin{proof}[Proof of Theorem \ref{redcase}]
Take
\[
|\ol{X}\backslash X|_{\red} = \Sigma_{i=1}^{n} V_i
\]
where each $V_i$ is an smooth effective Cartier divisor. We prove the claim by induction on $n$.

Let us suppose $n=1$, and write $V$ for $|\ol{X}\backslash X|_{\red} = V_1$. We have the Gysin triangle in $\DM^\eff$ for the closed immersion $V \hookrightarrow \ol{X}$,
\[
{\bf{M}}(\ol{X} \backslash V) \to {\bf{M}}(\ol{X}) \xrightarrow{g_{V}\ol{X}} {\bf{M}}(V)(1)[2] \xrightarrow{+} {\bf{M}}(\ol{X} \backslash V)[1].
\]
Since the unit $\id \to \ul{\omega}^\eff\ul{\omega}_{\eff}$ is a natural transformation, we get a morphism of distinguished triangles
\[
\xymatrix{
\ulM(\ol{X},V)  \ar[r] \ar[d]^1 & \ulM(\ol{X},\emptyset) \ar[r]^{} \ar[d]^2 & \ulM({V},\emptyset)(1)[2] \ar[r]^{} \ar[d]^3 & \ulM(\ol{X},V)[1] \ar[d] \\
\ul{\omega}^\eff {\bf{M}}(\ol{X} \backslash V) \ar[r] & \ul{\omega}^\eff {\bf{M}}(\ol{X}) \ar[r] & \ul{\omega}^\eff {\bf{M}}(V)(1)[2] \ar[r]  & \ul{\omega}^\eff {\bf{M}}(\ol{X} \backslash V)[1]
}
\] 
where the vertical arrows are the unit morphisms. Since $\ol{X}$ and $V$ are proper smooth over $k$, $(2)$ and $(3)$ are isomorphisms. Cf.~\cite[Theorem6.3.1]{KMSY20} . So $(1)$ is also an isomorphism.

Now we take 
\[
U = \ol{X} \backslash\bigcup_{i=1}^{n-1}V_i,
\]
and 
\[
W= V_n ~~\backslash(\bigcup_{i=1}^{n-1} V_n\cap V_i).
\]
It is easy to see that $U~\backslash W = \ol{X} ~\backslash \bigcup_{i=1}^{n}V_i$. Now the divisor $\Sigma_{i=1}^{n} V_i$ is a strict normal crossings divisor, so $V_n\cdot_{\ol{X}} V_i = |V_n \cap V_i |_{\red}$. So we get  
\[
\xymatrix{
\ulM(\ol{X},\Sigma_{i=1}^{n} V_i)  \ar[r] \ar[d]^4 & \ulM(\ol{X},\Sigma_{i=1}^{n-1} V_i) \ar[r]^{} \ar[d]^5& \ulM({V_n},\Sigma_{i=1}^{n-1} |V_n\cap V_i|_\red)(1)[2] \ar[r] \ar[d]^6 &  \\
\ul{\omega}^\eff {\bf{M}}(\ol{X} ~\backslash \bigcup_{i=1}^{n}V_i) \ar[r] & \ul{\omega}^\eff {\bf{M}}(U) \ar[r] & \ul{\omega}^\eff {\bf{M}}(W)(1)[2] \ar[r]  & 
}
\] 
By induction, $(5)$ and $(6)$ are isomorphisms. So the claim is proved.
\end{proof}

\section{The case of {$\Z[1/p]$}-coefficients}

In this section, we suppose that the base field has characteristic $p$. The main objective of this section is to show that the non-Voevodsky part of $\MDM^\eff$ is all $p^\infty$-torsion in the sense that the kernel of $\omega_{\eff} :\MDM^{\eff} \to \DM^{\eff}$ 
is contained in the kernel of $\MDM^{\eff} \to \MDM^{\eff}[1/p]$. 

 
For a natural number $l \in \N$ and an integer $n \in \Z_{\geq 0}$, we define a presheaf $\Zputr(\bcube^{(l/p^{n})}) \in PSh(\ulMCor,\Zpu)$ as
\[
\Zputr(\bcube^{(l/p^{n})}):(\ol{M},\Minf) \mapsto {\ulMCor} \bigr((\ol{M},p^n\Minf),(\mathbb{P}^1,l\{\infty\})\bigl) \otimes \Zpu.
\]

Let us define morphisms 
\begin{eqnarray*}
V^{(n)}:\lpnbcube \to  \lpnobcube,\\
F^{(n)}:\lpnobcube  \to  \lpnbcube,
\end{eqnarray*}
\[
\text{satisfying} \hspace{15pt}  V^{(n)} \circ F^{(n)} = p\cdot id, \hspace{15pt} F^{(n)} \circ V^{(n)} = p\cdot id.
\]

We use the morphism of modulus pairs $\tilde{\pi}:
(\mathbb{P}^1,lp\{\infty\})\to 
(\mathbb{P}^1,l\{\infty\})
$ defined by $k[x] \leftarrow k[x]; x^{p} \mapsfrom x$. Note, this is a minimal morphism which is finite flat on the total space, and therefore has a well defined transpose $\tilde{\pi}^{\text{t}}: 
(\mathbb{P}^1,l\{\infty\}) \to 
(\mathbb{P}^1,lp\{\infty\})
$ as follows.
\begin{lemma}
For a minimal finite flat morphism $g:(\ol{X},X^\infty) \to (\ol{Y},Y^\infty)$, we denote by $g^\circ$ the morphism $\ol{X}\backslash X^\infty \to \ol{Y}\backslash Y^\infty$ given arise to $g$. Then the transpose correspondence ${g^{\circ}}^{\text{t}}\in\Cor(\ol{Y}\backslash Y^\infty,\ol{X}\backslash X^\infty)$ lies in the subgroup $\ulMCor\bigl((\ol{Y},Y^\infty),(\ol{X},X^\infty)\bigr)$.
\end{lemma}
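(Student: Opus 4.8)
The plan is to unwind the definition of the morphism groups in $\ulMCor$ (see \cite[Definition~1.3.1]{KMSY19a}) and to check its two constituents — left–properness over the source and the modulus inequality — directly on the normalization of the closure of the transpose graph.

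First I would reduce everything to the graph of $g$. Write $\ol g\colon\ol X\to\ol Y$ for the underlying finite flat morphism; minimality of $g$ means $\ol g^{*}Y^{\infty}=X^{\infty}$, so $X^{\circ}=\ol g^{-1}(Y^{\circ})$ and the restriction $g^{\circ}\colon X^{\circ}\to Y^{\circ}$ is again finite flat. Hence the transpose of its graph is a genuine finite correspondence ${g^{\circ}}^{t}\in\Cor(Y^{\circ},X^{\circ})$, with components the transposes of the components of $\Gamma_{g^{\circ}}\subset X^{\circ}\times Y^{\circ}$. Since $X^{\infty}$ is an effective Cartier divisor, $X^{\circ}$ is dense in $\ol X$, so the closure of $\Gamma_{g^{\circ}}$ in $\ol X\times\ol Y$ is the whole graph $\Gamma_{\ol g}$; transposing (a homeomorphism, hence compatible with closures), the closure of ${g^{\circ}}^{t}$ in $\ol Y\times\ol X$ is $\Gamma_{\ol g}^{t}$, and the first projection $\Gamma_{\ol g}^{t}\to\ol X$ is an isomorphism identifying the second projection with $\ol g$. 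Decomposing $\ol X$ into its integral components $\ol X_{i}$, the normalization of (the closure of) the $i$-th component of ${g^{\circ}}^{t}$ is thus the normalization $\nu_{i}\colon\ol X_{i}^{N}\to\ol X_{i}\hookrightarrow\ol X$, and the two structure maps of the correspondence are $q_{i}\colon\ol X_{i}^{N}\to\ol X$ (the inclusion composed with $\nu_{i}$) and $p_{i}=\ol g\circ q_{i}\colon\ol X_{i}^{N}\to\ol Y$.

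With this description both conditions are immediate. Left–properness requires $p_{i}\colon\ol X_{i}^{N}\to\ol Y$ to be proper: $\ol g$ is finite, hence proper, and $\nu_{i}$ is finite (normalization of a scheme of finite type over the perfect field $k$), hence proper, so $p_{i}$ is proper. The modulus condition requires $p_{i}^{*}Y^{\infty}\geq q_{i}^{*}X^{\infty}$ as Cartier divisors on $\ol X_{i}^{N}$; but by functoriality of pullback of Cartier divisors along $\ol g\circ\nu_{i}$, together with minimality of $g$,
\[
p_{i}^{*}Y^{\infty}=\nu_{i}^{*}\,\ol g^{*}Y^{\infty}=\nu_{i}^{*}X^{\infty}=q_{i}^{*}X^{\infty},
\]
so the inequality holds — in fact with equality, i.e.\ ${g^{\circ}}^{t}$ is a \emph{minimal} admissible left–proper correspondence. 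Running these checks over all $i$ yields ${g^{\circ}}^{t}\in\ulMCor\bigl((\ol Y,Y^{\infty}),(\ol X,X^{\infty})\bigr)$.

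I do not expect a real obstacle: the argument is a short verification in which each hypothesis is used exactly once — flatness of $g$ so that $\ol g^{*}Y^{\infty}$ remains a Cartier divisor and the transpose graph is an honest finite correspondence, finiteness of $g$ for left–properness, and minimality for the modulus equality. The only points deserving a careful word are the identification of the closure of the transpose graph with $\Gamma_{\ol g}^{t}$ (and hence of its normalization with $\coprod_{i}\ol X_{i}^{N}$) and the compatibility $p_{i}^{*}Y^{\infty}=\nu_{i}^{*}\ol g^{*}Y^{\infty}$; everything else is formal.
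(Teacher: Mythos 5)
Your proposal is correct and follows exactly the route of the paper's (one-line) proof: left properness comes from finiteness of $\ol g$, and admissibility comes from minimality, i.e.\ $\ol g^{*}Y^{\infty}=X^{\infty}$, giving the equality $p_i^{*}Y^{\infty}=q_i^{*}X^{\infty}$ on the normalizations. You have simply written out in full the verification that the paper leaves implicit (identification of the closure of the transpose graph with $\Gamma_{\ol g}^{t}$ and the pullback computation), and these details are accurate.
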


We write $g^t$ for $g^{\circ t}$ considered as a morphism of modulus pairs.

\begin{proof}
It's left proper because $g$ is finite, and admissible because $g$ is minimal.
\end{proof}

\begin{defn}\label{AIAI}
For a modulus pair $(\ol{M},\Minf)$, we define $V^{(n)}(\ol{M},\Minf)$ as the morphism given by
\begin{eqnarray*}
\ulMCor\bigr((\ol{M},p^n\Minf),(\mathbb{P}^1,l\{\infty\})\bigl) &=& \ulMCor\bigr((\ol{M},p^{n+1}\Minf),(\mathbb{P}^1,lp\{\infty\})\bigl)\\ &\xrightarrow{\tilde{\pi} \circ -}& \ulMCor\bigr((\ol{M},p^{n+1}\Minf),(\mathbb{P}^1,l\{\infty\})\bigl)
\end{eqnarray*}
and $F^{(n)}(\ol{M},\Minf)$ as the morphism given by
\begin{eqnarray*}
\ulMCor\bigr((\ol{M},p^{n+1}\Minf),(\mathbb{P}^1,l\{\infty\})\bigl) &\xrightarrow{\tilde{\pi}^{\text{t}} \circ -}& \ulMCor\bigr((\ol{M},p^{n+1}\Minf),(\mathbb{P}^1,lp\{\infty\})\bigl)\\ &=& \ulMCor\bigr((\ol{M},p^{n}\Minf),(\mathbb{P}^1,l\{\infty\})\bigl)
\end{eqnarray*}
\end{defn}
\begin{lemma}\label{vcomp}
$V^{(n)} \circ F^{(n)} = p\cdot id$ and $F^{(n)} \circ V^{(n)} = p\cdot id$.
\end{lemma}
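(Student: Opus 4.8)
The plan is to recognize the two identities as statements purely about the correspondences $\tilde{\pi}$ and $\tilde{\pi}^{\mathrm{t}}$, and then to compute those by a direct cycle calculation.

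First I would unwind the definitions. The identification
$\ulMCor((\ol{M},p^n\Minf),(\mathbb{P}^1,l\{\infty\}))=\ulMCor((\ol{M},p^{n+1}\Minf),(\mathbb{P}^1,lp\{\infty\}))$
entering the definitions of $V^{(n)}$ and $F^{(n)}$ is literally the identity on underlying cycles: left properness over $\ol{M}$ is unchanged, and the admissibility condition for the pair $\bigl((\ol{M},p^{n+1}\Minf),(\mathbb{P}^1,lp\{\infty\})\bigr)$ is obtained from the one for $\bigl((\ol{M},p^n\Minf),(\mathbb{P}^1,l\{\infty\})\bigr)$ merely by multiplying the modulus inequality by $p$. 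Since these identifications are the identity on cycles, they commute with composition of finite correspondences; unwinding, and keeping track of the reindexings (which cancel), one finds that $V^{(n)}\circ F^{(n)}$ is post-composition with $\tilde{\pi}\circ\tilde{\pi}^{\mathrm{t}}$, and $F^{(n)}\circ V^{(n)}$ is post-composition with $\tilde{\pi}^{\mathrm{t}}\circ\tilde{\pi}$. Thus the lemma reduces to the two equalities
\[
\tilde{\pi}\circ\tilde{\pi}^{\mathrm{t}}=p\cdot\mathrm{id}_{(\mathbb{P}^1,l\{\infty\})},\qquad
\tilde{\pi}^{\mathrm{t}}\circ\tilde{\pi}=p\cdot\mathrm{id}_{(\mathbb{P}^1,lp\{\infty\})}
\]
in $\ulMCor$.

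To prove these, recall that for fixed modulus pairs the group $\ulMCor(-,-)$ sits inside the group of finite correspondences between the interiors, with composition induced from $\Cor$, and that $p\cdot\mathrm{id}$ has underlying cycle $p[\Delta_{\mathbb{P}^1}]$. So it suffices to compute the underlying cycles of the two composites in $\Cor$. On the interior $\mathbb{A}^1=\mathbb{P}^1\setminus\{\infty\}$ the correspondence $\tilde{\pi}$ is the graph of the finite flat degree-$p$ morphism $g\colon\mathbb{A}^1\to\mathbb{A}^1$, $t\mapsto t^p$ (whose closure fixes $\infty$), and $\tilde{\pi}^{\mathrm{t}}={}^{t}\Gamma_g$. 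A direct cycle computation then gives $\Gamma_g\circ{}^{t}\Gamma_g=p\,[\Delta_{\mathbb{A}^1}]$ (the relevant scheme-theoretic intersection in $\mathbb{A}^1\times\mathbb{A}^1\times\mathbb{A}^1$ is reduced, and the pushforward of $[\mathbb{A}^1]$ onto the diagonal along $g$ has multiplicity $\deg g=p$), and ${}^{t}\Gamma_g\circ\Gamma_g=[\mathbb{A}^1\times_{\mathbb{A}^1}\mathbb{A}^1]=p\,[\Delta_{\mathbb{A}^1}]$, since $\mathbb{A}^1\times_{\mathbb{A}^1}\mathbb{A}^1=\Spec k[x_1,x_2]/(x_1^p-x_2^p)=\Spec k[x_1,x_2]/((x_1-x_2)^p)$ is supported on the diagonal with length $p$ at its generic point. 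Taking closures in $\mathbb{P}^1\times\mathbb{P}^1$ introduces no components over $\infty$, so both composites have underlying cycle $p[\Delta_{\mathbb{P}^1}]$; as this cycle is obviously admissible and left proper for the modulus pairs in question, it equals $p\cdot\mathrm{id}$ in $\ulMCor$, which finishes the argument.

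I expect the main obstacle to be the inseparability bookkeeping in the computation of ${}^{t}\Gamma_g\circ\Gamma_g$: since $g$ is purely inseparable it is a homeomorphism on underlying spaces, so the identity $\tilde{\pi}^{\mathrm{t}}\circ\tilde{\pi}=p\cdot\mathrm{id}$ is invisible set-theoretically and the factor $p$ must be extracted from the non-reduced structure of $\mathbb{A}^1\times_{\mathbb{A}^1}\mathbb{A}^1$ — this is exactly where finite flatness of degree $p$ enters. A minor secondary nuisance is keeping the chain of identifications oriented correctly, so that $V^{(n)}\circ F^{(n)}$ yields $\tilde{\pi}\circ\tilde{\pi}^{\mathrm{t}}$ and not $\tilde{\pi}^{\mathrm{t}}\circ\tilde{\pi}$, and conversely for $F^{(n)}\circ V^{(n)}$.
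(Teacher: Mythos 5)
Your proposal is correct and follows essentially the same route as the paper: reduce $V^{(n)}\circ F^{(n)}$ and $F^{(n)}\circ V^{(n)}$ to the correspondence-level identities $\pi\circ\pi^{\mathrm{t}}=p\cdot\mathrm{id}$ and $\pi^{\mathrm{t}}\circ\pi=p\cdot\mathrm{id}$ in $\Cor(\AA^1,\AA^1)$, obtain the first from $\pi$ being finite flat of degree $p$, and obtain the second from the characteristic-$p$ factorization $x^p-y^p=(x-y)^p$, which is exactly the content of the paper's Lemma~\ref{pullback}. The only cosmetic difference is that you compute the non-reduced fiber product $\Spec k[x_1,x_2]/((x_1-x_2)^p)$ directly, whereas the paper packages the same computation as the flat pullback $(\mathrm{id}\times\pi)^*(\Gamma_\pi)$ before pushing forward along $p_{13}$.
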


\begin{proof}
We write $\pi$ for the morphism $\mathbb{A}^1 \to \mathbb{A}^1$ given by the morphism of $k$-algebras $k[x] \leftarrow k[x]; x^p \mapsfrom x$. To prove the claim, it is enough to prove that $\pi \circ \pi^{t} = p\cdot id \in \Cor(\mathbb{A}^1,\mathbb{A}^1)$ and $\pi^t \circ \pi = p\cdot id\in \Cor(\mathbb{A}^1,\mathbb{A}^1)$. Since $\pi$ is a flat, finite, surjective morphism with degree $p$, it follows that $\pi \circ \pi^{t} = p\cdot id \in \Cor(\mathbb{A}^1,\mathbb{A}^1)$ is true. So the problem is the other equality.

We need the following lemma.
\begin{lemma}\label{pullback}
The flat pull back $(\text{id}\times\pi)^{*}:Z^{1}(\mathbb{A}^{1}\times\mathbb{A}^{1})\to Z^{1}(\mathbb{A}^{1}\times\mathbb{A}^{1})$ sends $\Gamma_{\pi}$ to $p\cdot\text{id}$. Where $\Gamma_{\pi}$ is the graph of $\pi$, i.e., $\Gamma_{\pi}:\mathbb{A}^{1}\to\mathbb{A}^{1}\times\mathbb{A}^{1}; a \mapsto (a,a^{p})$.
\end{lemma}

\begin{proof}[Proof of Lemma \ref{pullback}]
The ideal of $k[x]\otimes_kk[y]$ corresponding to $\Gamma_\pi$ is $(x^{p}-y)$. Now $\text{id}\times\pi$ comes from the $k$-morphism $k[x]\otimes_k k[y] \to k[x]\otimes_k k[y];  x \mapsto x ,~y\mapsto y^{p}.$ So the pullback of the ideal sheaf $(\text{id}\times\pi)^{*}((x^{p}-y))$ is the ideal sheaf $(x^{p}-y^{p})$, But $\text{ch}(k)=p$, so this is equal to $(x-y)^{p}$. The ideal $(x-y)$ is corresponds to the diagonal morphism $\Delta_{\mathbb{A}^{1}}$, i.e., the identity morphism in in $\Cor(\mathbb{A}^{1},\mathbb{A}^{1})$. 
\end{proof}

Now we recall $\pi$ and $\pi^{t}$ in $\Cor(\mathbb{A}^{1},\mathbb{A}^{1})$. The map $\pi$ is the graph map $\Gamma_{\pi}:\mathbb{A}^{1} \to \mathbb{A}^{1} \times \mathbb{A}^{1}; a \mapsto (a,a^{p})$, and $\pi^{t}$ is the map $\psi:\mathbb{A}^{1}\to\mathbb{A}^{1}\times\mathbb{A}^{1}; b \to (b^{p},b)$. We recall the composition $\pi^{t} \circ \pi$, it is 
\[
\pi^{t} \circ \pi = p_{13*}((\Gamma_{\pi}\times\mathbb{A}^{1})\cdot_{\mathbb{A}^{1}\times\mathbb{A}^{1}\times\mathbb{A}^{1}}(\mathbb{A}^{1}\times\psi)).
\]

Now $\Gamma_{\pi}\times\mathbb{A}^{1}$ and $\mathbb{A}^{1}\times\psi$ are effective Cartier divisors, and they are intersect properly, so 
\[
(\Gamma_{\pi}\times\mathbb{A}^{1})\times_{\mathbb{A}^{1}\times\mathbb{A}^{1}\times\mathbb{A}^{1}}(\mathbb{A}^{1}\times\psi)=(\Gamma_{\pi}\times\mathbb{A}^{1})\cdot_{\mathbb{A}^{1}\times\mathbb{A}^{1}\times\mathbb{A}^{1}}(\mathbb{A}^{1}\times\psi).
\]
Now this is denoted by $V$. Then we have following diagram\scriptsize
\[\xymatrix{
V  \ar@{}[drr]|\square \ar@{^{(}-_>}[rr] \ar@{^{(}-_>}[d]_{} & & \mathbb{A}^{1}\times\mathbb{A}^{1} \ar[rrd]^{} \ar@{^{(}-_>}[d]_{\mathbb{A}^{1}\times\psi} & & \\
\mathbb{A}^{1}\times\mathbb{A}^{1} \ar@{^{(}-_>}[rr]^{\Gamma_\pi \times \mathbb{A}^{1}} \ar@{}[drr]|\square \ar[d] & & \mathbb{A}^{1}\times\mathbb{A}^{1}\times\mathbb{A}^{1} \ar[d]_{p_{12}} \ar[rr]^{p_{13}}   & & \mathbb{A}^{1}\times\mathbb{A}^{1}  \\
\mathbb{A}^{1} \ar[rr]_{\Gamma_\pi} & & \mathbb{A}^{1}\times\mathbb{A}^{1}  & &  
}
\]\normalsize
By definition, we get 
\begin{equation}\label{darkness}
 p_{13} \circ (\mathbb{A}^{1}\times\psi)=\text{id}_{\mathbb{A}^{1}\times\mathbb{A}^{1}}
\end{equation}
and 
\begin{equation}\label{light}
p_{12}\circ(\mathbb{A}^{1}\times\psi) = \text{id}_{\mathbb{A}^{1}}\times\pi.
\end{equation}
The equality (\ref{light}) claims that $V$ is the flat pull back $(id_{\mathbb{A}^{1}} \times\pi)^{*}(\Gamma_{\pi})$, by Lemma \ref{pullback} and \cite[Propostion~7.1]{Fulton} we get
\[
V=p\cdot\text{id}.
\] 
By (\ref{darkness}) we get that $\pi^{t}\circ\pi=p_{13*}(V) = V$. Therefore $\pi^{t}\circ\pi=p\cdot\text{id}$ in $\Cor(\mathbb{A}^{1},\mathbb{A}^{1})$. 
\end{proof}
We consider two colimits $\colim_n (\lpnbcube, V^{(n)})$ and $\colim_n (\ul{\omega}^*\Zputr(\mathbb{A}^1), \ul{\omega}^*\pi)$ in the category $PSh(\ulMCor)$ where the transition maps are $V^{(n)}:\lpnbcube \to \lpnobcube$ and $\ul{\omega}^*\pi:\ul{\omega}^*\Zputr(\mathbb{A}^1) \to \ul{\omega}^*\Zputr(\mathbb{A}^1)$, and the morphism 
\[
I:\colim_n (\lpnbcube, V^{(n)}) \to \colim_n (\ul{\omega}^*\Zputr(\mathbb{A}^1), \ul{\omega}^*\pi)
\]
given by the natural immersions $\lpnbcube\to \ul{\omega}^*\Zputr(\mathbb{A}^1)$. 
\begin{lemma} \label{6.5}There are the following isomorphisms in $PSh(\ulMCor)$.
\begin{align*}
 \Z[1/p]_{\tr}(\bcube^{(l)}) ~&\simeq~ \colim_n (\lpnbcube, V^{(n)}) \\
\ulomega^*\Z[1/p]_{\tr}(\AA^1)~&\simeq ~ \colim_n (\ulomega^*\Zputr(\AA^1),\ulomega^*\pi) 
\end{align*}
\end{lemma}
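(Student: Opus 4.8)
The plan is to prove the two isomorphisms by the same mechanism: in both cases the right-hand side is a colimit of a ``telescope'' diagram whose transition maps are $V^{(n)}$ (resp. $\ulomega^*\pi$), and composing $F^{(n)}$ after $V^{(n)}$ gives multiplication by $p$, which is invertible after inverting $p$. So the colimit should be computed by ``forgetting the $p$-power levels'' and is just the value at one level, with the torsion caused by the $p^n$-twist on the modulus washed out. First I would make precise the evaluation of both sides on a modulus pair $\MpM$. The left-hand side of the first isomorphism is by definition $\ulMCor\bigl(\MpM,(\PP^1,l\{\infty\})\bigr)\otimes\Zpu$, while the $n$-th term of the colimit is $\ulMCor\bigl((\ol{M},p^n\Minf),(\PP^1,l\{\infty\})\bigr)\otimes\Zpu$, which by the identity $\ulMCor\bigl((\ol{M},p^n\Minf),(\PP^1,l\{\infty\})\bigr)=\ulMCor\bigl((\ol{M},p^{n+1}\Minf),(\PP^1,lp\{\infty\})\bigr)$ used in the definition of $V^{(n)}$ is the reindexing that makes the diagram a genuine tower. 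The structure map $\lpnbcube\to\Zputr(\bcube^{(l)})$ is induced by $\tilde\pi^t:(\PP^1,l\{\infty\})\to(\PP^1,lp\{\infty\})$ together with the admissibility/properness that relaxes $p^n\Minf$ to $\Minf$; I would check it is compatible with $V^{(n)}$, so it induces a map out of the colimit.

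The heart of the argument is to produce an inverse. Using Lemma~\ref{vcomp} (or Lemma~\ref{pullback}/Lemma~\ref{vcomp} at the level of $\Cor(\AA^1,\AA^1)$), the composites $V^{(n)}\circ F^{(n)}$ and $F^{(n)}\circ V^{(n)}$ are both multiplication by $p$ on the relevant Hom-groups; after tensoring with $\Zpu$ these become isomorphisms, so each $V^{(n)}$ becomes invertible after inverting $p$, and the colimit along the $V^{(n)}$ is canonically identified with any single term $\lpnbcube$. It then remains to identify $\Z[1/p]_\tr(\bcube^{(l)})$ with, say, the $n=0$ term: this is precisely the observation that $\ulMCor\bigl(\MpM,(\PP^1,l\{\infty\})\bigr)\otimes\Zpu\simeq\lpnbcube(\MpM)$ at $n=0$ by definition, and the transition maps are the $V^{(n)}$; concretely, the map from the colimit to $\Z[1/p]_\tr(\bcube^{(l)})$ has inverse given by the inclusion of the $n=0$ stage followed by the structure map, and the two composites are the identity because the $V^{(n)}$ are invertible mod $p$-torsion. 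The second isomorphism is the completely analogous statement after applying $\ulomega^*$: $\ulomega^*\Zputr(\AA^1)$ evaluated at $\MpM$ is $\Cor(M^\circ,\AA^1)\otimes\Zpu$, the tower $(\ulomega^*\Zputr(\AA^1),\ulomega^*\pi)$ has transition maps $\ulomega^*\pi$ whose ``transpose'' $\ulomega^*\pi^t$ yields the same $V\!\circ\! F=p=F\!\circ\! V$ relation in $\Cor(\AA^1,\AA^1)$ by Lemma~\ref{vcomp}, so the colimit collapses to a single stage and matches $\ulomega^*\Z[1/p]_\tr(\AA^1)$.

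The main obstacle I anticipate is bookkeeping rather than conceptual: one must verify carefully that the structure maps $\lpnbcube\to\Z[1/p]_\tr(\bcube^{(l)})$ are strictly compatible with the tower maps $V^{(n)}$ — i.e.\ that the relevant pentagon of $\tilde\pi$, $\tilde\pi^t$, the reindexing isomorphisms, and the modulus-relaxation maps genuinely commutes in $\ulMCor$, not just after passing to $\ulMDM^\eff$ — and that naturality holds in $\MpM$ so the identifications assemble into a morphism of presheaves on $\ulMCor$. Once that is in place, the colimit computation is formal: a filtered colimit of a tower in which the composite of two consecutive (reindexed) transition maps is $p\cdot\id$ and $p$ is invertible is the common value. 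Finally one checks that the resulting isomorphism is the one induced by the evident inclusion, which is immediate from the construction.
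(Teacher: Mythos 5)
Your proposal is correct and takes essentially the same route as the paper: by Lemma~\ref{vcomp} the transition maps $V^{(n)}$ (and $\pi$) become isomorphisms after tensoring with $\Z[1/p]$, so each colimit collapses onto its $n=0$ term, which is the left-hand side by definition. The auxiliary construction of an explicit inverse via the $F^{(n)}$ and a ``modulus-relaxation'' map is unnecessary (and as stated would need a $p^{-n}$ renormalization, since $F^{(n)}\circ V^{(n)}=p\cdot\mathrm{id}$ rather than $\mathrm{id}$, and the inclusion of Hom-groups goes from modulus $M^\infty$ into modulus $p^nM^\infty$, not the other way), but this does not affect the core argument.
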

\begin{proof}
The prime $p$ is invertible in $\Z[1/p]$ so by Lemma \ref{vcomp}, morphisms $V^{(n)}$ are isomorphisms. Similarly the morphism $\pi:\Zputr(\AA^1)\to\Zputr(\AA^1)$ is an isomorphism. So the claim follows.  
\end{proof}
\begin{lemma}\label{III}
$I$ is an isomorphism.
\end{lemma}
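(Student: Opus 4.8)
The plan is to verify that $I$ is an isomorphism after evaluating on an arbitrary modulus pair $\MpM$, where both sides become filtered colimits of abelian groups, so that it suffices to check injectivity and surjectivity separately. Unwinding definitions, the $n$-th component of $I$ at $\MpM$ is the inclusion of the subgroup $\ulMCor\bigl((\ol M, p^n M^\infty),(\PP^1, l\{\infty\})\bigr)\otimes\Zpu$ into $\Cor(M^\circ,\mbA)\otimes\Zpu$, and these inclusions are compatible with the transition maps $V^{(n)}$ on the source and $\ulomega^*\pi$ on the target. Injectivity of $I(\MpM)$ is then the standard diagram chase: a class dying in $\colim_n\ulomega^*\Zputr(\mbA)(\MpM)$ dies already at a finite stage under the transition maps, and being the image of a correspondence with modulus under a monomorphism it was already zero there.

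The crux is surjectivity. Fix $V\in\Cor(M^\circ,\mbA)$; it is enough to show its class in the target colimit is hit by $I$. Because $\pi\circ\pi^t=\pi^t\circ\pi=p\cdot\mathrm{id}$ in $\Cor(\mbA,\mbA)$ — the computation in the proof of Lemma~\ref{vcomp}, which is exactly where $\mathrm{char}\,k=p$ enters — the operator $\ulomega^*\pi$ is invertible after inverting $p$, so up to the isomorphisms of Lemma~\ref{6.5} the class of $V$ at stage $n$ is represented by $\pi^n\circ V:=(\mathrm{id}\times\pi^n)_*V$. Hence it suffices to prove that for $n\gg 0$ the correspondence $\pi^n\circ V$ lies in $\ulMCor\bigl((\ol M, p^n M^\infty),(\PP^1, l\{\infty\})\bigr)$. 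For this, let $\ol V\subset\ol M\times\PP^1$ be the closure of $V$, with normalization $\ol V^N$; since $\PP^1$ is proper, $\ol V$ and all of the $(\mathrm{id}\times\ol\pi^{\,n})(\ol V)$ are proper over $\ol M$, so the left-properness half of admissibility is automatic and only the modulus inequality on normalizations needs checking. Let $\xi_1,\dots,\xi_r$ be the finitely many codimension-one points of $\ol V^N$ over $|M^\infty|\cap\{\infty\}$, with $a_i\geq 1$ and $b_i\geq 0$ the multiplicities there of the pullbacks of $M^\infty$ and of $\{\infty\}$. Using that the extension $\ol\pi\colon\PP^1\to\PP^1$ of $\pi$ is finite with $\ol\pi^*\{\infty\}=p\{\infty\}$, the closure of $\pi^n\circ V$ is $(\mathrm{id}\times\ol\pi^{\,n})(\ol V)$ and one gets an induced finite surjective morphism over $\ol M$ from $\ol V^N$ onto its normalization; tracking multiplicities through this morphism and through $(\ol\pi^{\,n})^*\{\infty\}=p^n\{\infty\}$ turns the admissibility inequality at a codimension-one point below $\xi_i$ into one of the shape $(\text{weight})\cdot a_i\geq l\,p^n b_i$. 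At stage $n$ the weight carries the factor $p^n$ from $p^n M^\infty$, the $a_i$ are positive, and only finitely many $b_i$ occur, so this holds for $n$ large. Together with injectivity this gives the lemma.

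I expect the multiplicity bookkeeping of the last step to be the only genuine difficulty: one must handle that $\ol\pi^{\,n}$ is purely inseparable in characteristic $p$, control how normalizing the closures of $V$ and of $\pi^n\circ V$ interacts with the pullbacks of $M^\infty$ and $\{\infty\}$, and ensure no codimension-one point over $M^\infty$ with uncontrolled pole order is created; the geometric input $\ol\pi^*\{\infty\}=p\{\infty\}$ combined with the char-$p$ identity $\pi\pi^t=\pi^t\pi=p$ is what lets the weighting of the modulus absorb the poles, and the finiteness of the set of relevant codimension-one points keeps the rest elementary.
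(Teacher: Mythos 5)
Your injectivity argument and your unwinding of the colimit are faithful to the definitions: since the transition maps on both sides are postcomposition with $\pi$ and the structure maps are injective, the class of $V$ taken at a fixed stage lies in the image of $I$ exactly when $\pi^n\circ V$ is admissible for $(\ol{M},p^nM^\infty)\to(\PP^1,l\{\infty\})$ for some $n$. The gap is in the multiplicity estimate that is supposed to produce such an $n$: the factor $p^n$ you gain from the modulus $p^nM^\infty$ is exactly cancelled by the factor $p^n$ coming from $(\ol{\pi}^{\,n})^*\{\infty\}=p^n\{\infty\}$. Indeed, the closure of $\pi^n\circ V$ is the image $\ol{V'}$ of $\ol{V}$ under $\mathrm{id}\times\ol{\pi}^{\,n}$, and pulling the admissibility inequality back along the finite surjective map $\ol{V}^N\to\ol{V'}^N$ (every codimension-one point downstairs is dominated by one upstairs, so this detects the inequality) converts it into $p^nM^\infty|_{\ol{V}^N}\ge l\,p^n\{\infty\}|_{\ol{V}^N}$, i.e.\ into admissibility of $V$ itself at level $0$ --- a condition independent of $n$ and false in general; the ramification and inseparability weights of $\ol{V}^N\to\ol{V'}^N$ are bounded independently of $n$ (the degree divides $[k(V):k(M^\circ)]$) and cancel as well. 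Concretely, for $(\ol{M},M^\infty)=(\PP^1,\{\infty\})$ and $V=\Gamma_{x\mapsto x^{p^2}}$ one has $\pi^n\circ V=\Gamma_{x\mapsto x^{p^{n+2}}}$, whose normalized closure pulls back $l\{\infty\}$ to $lp^{\,n+2}\{\infty\}$ while $p^nM^\infty$ pulls back to $p^n\{\infty\}$, so the admissibility you need fails for every $n$. Composing with $\pi^n$ cannot absorb poles: the fact that both sides scale by $p$ is precisely why $V^{(n)}$ is well defined, not a mechanism for gaining admissibility.

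The paper's proof takes a different route and never composes with $\pi$ in the surjectivity step: it quotes \cite[Lemma 1.1.3]{KMSY19a}, namely $\Cor(M^\circ,\AA^1)=\bigcup_n\ulMCor((\ol{M},p^nM^\infty),(\PP^1,l\{\infty\}))$ --- for a fixed correspondence $W$ the divisor $l\{\infty\}|_{\ol{W}^N}$ is a fixed effective divisor supported over $|M^\infty|$, hence dominated by $p^nM^\infty|_{\ol{W}^N}$ for $n\gg 0$ --- while the char-$p$ identity $\pi\circ\pi^t=\pi^t\circ\pi=p$ enters only through Lemmas \ref{vcomp} and \ref{6.5} to invert the transition maps. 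Note, however, that your (correct) unwinding shows the translation of this union statement into surjectivity of $I$ for the system with transition maps $\ul{\omega}^*\pi$ is itself the delicate point: admissibility of $W$ at level $n$ produces a class at stage $n$, which is not the class of $W$ at an earlier stage, and your attempt to bridge that discrepancy by the composed-correspondence estimate is exactly the step that fails. So if you redo the proof you should either follow the paper and handle this bookkeeping explicitly (for instance by comparing with the system of inclusions $\ulMCor((\ol{M},p^nM^\infty),(\PP^1,l\{\infty\}))\subseteq\ulMCor((\ol{M},p^{n+1}M^\infty),(\PP^1,l\{\infty\}))$), rather than rely on $\pi^n\circ V$ becoming admissible, which it does not.
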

\begin{proof}
The problem is surjectivity. By \cite[Lemma 1.1.3]{KMSY19a}
\[
\Cor(\ol{M}\backslash \Minf, \mathbb{A}^1) = \bigcup_{n}\ulMCor((\ol{M},p^{n}\Minf),(\mathbb{P}^{1},l\{\infty\})).
\]
Hence, for any elementary correspondence $W\in \Cor(\ol{M}\backslash \Minf,\mathbb{A}^1)$, there is an integer $n$ such that $W\in \ulMCor((\ol{M},p^{n}\Minf),(\mathbb{P}^{1},l\{\infty\}))$.
\end{proof}

This lemma implies the following theorem which only holds in positive characteristic.

\begin{thm}\label{charap}
For any $l\in \Z_{\geq 1}$, $\ulM((\PP^1,\{\infty\})\slash (\PP^1,l\{\infty\}))\otimes \Zpu=0$.
\end{thm}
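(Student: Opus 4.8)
The plan is to reduce Theorem~\ref{charap} to an isomorphism of presheaves and then feed in Lemmas~\ref{6.5} and~\ref{III}. Recall that, by the very definition of $\Zputr(\bcube^{(\bullet)})$, one has $\Zputr(\bcube^{(l)})=\Ztr(\PP^1,l\{\infty\})\otimes\Zpu$, and that the identity of $\PP^1$ defines a morphism $(\PP^1,l\{\infty\})\to(\PP^1,\{\infty\})$ in $\ulMCor$ (it is admissible since $l\{\infty\}\ge\{\infty\}$), hence a monomorphism of presheaves
\[
\alpha: \Ztr(\PP^1,l\{\infty\})\otimes\Zpu\longrightarrow \Ztr(\PP^1,\{\infty\})\otimes\Zpu .
\]
Under the canonical triangulated functor $D(PSh(\ulMCor,\Zpu))\to\ulMDM^\eff(k,\Zpu)$ (Nisnevich sheafification followed by the Verdier localization defining $\ulMDM^\eff$), the object $\ulM\bigl((\PP^1,\{\infty\})/(\PP^1,l\{\infty\})\bigr)\otimes\Zpu$ is the cone of the image of $\alpha$. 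Hence it suffices to prove that $\alpha$ is an \emph{isomorphism} of presheaves; then this cone vanishes.

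The heart of the matter is to identify the comparison map $I$ of Lemma~\ref{III} with an elementary forgetful inclusion. By Lemma~\ref{vcomp} the maps $V^{(n)}$ satisfy $V^{(n)}F^{(n)}=F^{(n)}V^{(n)}=p\cdot\mathrm{id}$, so they are invertible over $\Zpu$; the transition map $\ulomega^*\pi$ is likewise invertible over $\Zpu$ (cf. the proof of Lemma~\ref{6.5}). Therefore the canonical structure maps out of the $n=0$ terms,
\[
\iota_0: \Ztr(\PP^1,l\{\infty\})\otimes\Zpu\iso\colim_n\bigl(\Zputr(\bcube^{(l/p^n)}),V^{(n)}\bigr),
\]
\[
\iota_0': \ulomega^*\Zputr(\AA^1)\iso\colim_n\bigl(\ulomega^*\Zputr(\AA^1),\ulomega^*\pi\bigr),
\]
are isomorphisms, and they realize the isomorphisms of Lemma~\ref{6.5}. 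Now $I$ is the map induced by the natural immersions $j_n$, which on a modulus pair $(\ol M,\Minf)$ are the inclusions of the subgroup $\ulMCor\bigl((\ol M,p^n\Minf),(\PP^1,l\{\infty\})\bigr)\otimes\Zpu$ into $\Cor(\ol M\setminus\Minf,\AA^1)\otimes\Zpu=(\ulomega^*\Zputr(\AA^1))(\ol M,\Minf)$, so that $I\circ\iota_n=\iota_n'\circ j_n$ for every $n$. Taking $n=0$ and using that $I$ (by Lemma~\ref{III}), $\iota_0$, and $\iota_0'$ are all isomorphisms, I conclude that the forgetful inclusion
\[
j^{(l)}: \Ztr(\PP^1,l\{\infty\})\otimes\Zpu\longrightarrow \ulomega^*\Zputr(\AA^1)
\]
is an isomorphism in $PSh(\ulMCor,\Zpu)$, and this holds for every $l\ge 1$.

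To finish, apply the previous paragraph to the given $l$ and also to $l=1$: both $j^{(l)}$ and $j^{(1)}$ are isomorphisms. Evaluated on $(\ol M,\Minf)$, the map $j^{(l)}$ is the inclusion into $\Cor(\ol M\setminus\Minf,\AA^1)\otimes\Zpu$ of the subgroup of correspondences admissible for $(\ol M,\Minf)\to(\PP^1,l\{\infty\})$, and $j^{(1)}$ is the analogous inclusion for $(\PP^1,\{\infty\})$; since composing with the identity morphism $(\PP^1,l\{\infty\})\to(\PP^1,\{\infty\})$ leaves a correspondence unchanged on the open parts, we have $j^{(1)}\circ\alpha=j^{(l)}$. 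Hence $\alpha=(j^{(1)})^{-1}\circ j^{(l)}$ is an isomorphism of presheaves, and by the first paragraph Theorem~\ref{charap} follows. I expect the only genuine work to lie in the second paragraph: Lemmas~\ref{6.5} and~\ref{III} are phrased via the $V^{(n)}$- and $\ulomega^*\pi$-colimits, whereas what the proof actually needs is the concrete observation that in degree zero $I$ is simply the ``forget the modulus'' inclusion $j^{(l)}$ --- once that compatibility is recorded, the rest is formal.
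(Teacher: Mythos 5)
Your proposal is correct and takes essentially the same route as the paper: reduce to showing the presheaf inclusion $\Ztr(\PP^1,l\{\infty\})\otimes\Zpu\to\Ztr(\PP^1,\{\infty\})\otimes\Zpu$ is an isomorphism, use Lemmas~\ref{vcomp}, \ref{6.5} and \ref{III} to identify both source and target with the colimits and deduce that each forgetful inclusion $j^{(l)}$ into $\ulomega^*\Zputr(\AA^1)$ is an isomorphism, then conclude by two-out-of-three. Your explicit verification that $I$ restricted to the $n=0$ term is the forgetful inclusion is exactly the compatibility the paper's commutative square implicitly relies on.
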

\begin{proof}
Since $\Zpu$ is a flat $\Z$-module, it is enough to show that the natural morphism $\Zputr(\PP^1,l\{\infty\}) \to \Zputr(\PP^1,\{\infty\})$ is an isomorphism. There is a commutative diagram
\[
\xymatrix{
\Z[1/p]_{\tr}(\bcube^{(l)}) \ar[r]^-{\simeq} \ar[d] &\colim_n (\lpnbcube, V^{(n)}) \ar[d]^-{I} \\
\ulomega^*\Z[1/p]_{\tr}(\AA^1) \ar[r]^-{\simeq} & \colim_n (\ulomega^*\Zputr(\AA^1),\ulomega^*\pi) 
}
\]
in $PSh(\ulMCor)$, where vertical maps are natural inclusions and horizontal maps are isomophisms given by Lemma~\ref{6.5}. By Lemma~\ref{III} we know that $I$ is an isomorphism. So the natural inclusion $\Z[1/p]_{\tr}(\bcube^{(l)}) \to \ulomega^*\Z[1/p]_{\tr}(\AA^1)$ is also an isomophism for all $l \geq 1$.  The result now follows from the sequence of inclusions $\Z[1/p]_{\tr}(\bcube^{(l)}) \hookrightarrow \Z[1/p]_{\tr}(\bcube) \hookrightarrow\ulomega^*\Z[1/p]_{\tr}(\AA^1)$.
\end{proof}

\begin{cor}\label{Acharap}
For any modulus pair $\MpM$ such that $\ol{M}$ is smooth and $M^\infty_\red$ is strict normal crossing,  \[
\ulM\MpM\otimes\Zpu \simeq \ulM(\ol{M},M^\infty_\red)\otimes\Zpu.
\]
\end{cor}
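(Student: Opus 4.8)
The plan is to show that the morphism
$\ulM\MpM\to\ulM(\ol{M},M^\infty_\red)$
induced by $\operatorname{id}_{\ol{M}}$ (admissible since $M^\infty\ge M^\infty_\red$) is an isomorphism in $\ulMDM^\eff(k,\Zpu)$. Writing $M^\infty=\sum_i m_iV_i$ with the $V_i$ the smooth irreducible components of the strict normal crossings divisor $M^\infty_\red$, this morphism is a composite of morphisms of the form $\ulM(\ol{M},nV+D')\to\ulM(\ol{M},V+D')$, where $V$ runs over the $V_i$, $n:=m_V\ge 1$, $D'$ collects the remaining components with their current multiplicities, and $|V+D'_\red|=M^\infty_\red$ stays strict normal crossings throughout. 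So it suffices to show each such morphism is an isomorphism after inverting $p$ (nothing to prove if $n=1$).

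Equivalently, since $\Ztr(\ol{M},nV+D')\to\Ztr(\ol{M},V+D')$ is a monomorphism, I must show that the cokernel presheaf $P:=\Coker\bigl(\Ztr(\ol{M},nV+D')\to\Ztr(\ol{M},V+D')\bigr)$, whose image in $\ulMDM^\eff$ is the cone of that morphism, is zero in $\ulMDM^\eff$ after inverting $p$. For this I would run the reduction from the proof that the Gysin triangles are distinguished: the (MV) relation lets one work Zariski-locally on $\ol{M}$; Lemma~\ref{decompose} (with $\ol{Z}:=V$ and "$M^\infty$"$:=D'$) provides an étale chart $q:\ol{M}\to\mathbb{A}^m$ with $V=q^{-1}(\mathbb{A}^{m-1}\times\{0\})$ and $D'=q^{-1}(\{T_1^{d_1}\cdots T_s^{d_s}=0\})$; the standard triple construction of \cite[Theorem~4.10]{SV00} produces an open $\ol{X}'\subset\ol{M}\times_{\mathbb{A}^m}(V\times\mathbb{A}^1)$ mapping étale to both $\ol{M}$ and $V\times\mathbb{A}^1$, isomorphically over $V$ and over $V\times\{0\}$ respectively; and Theorem~\ref{PEPEPE} — which is exactly the étale-excision statement for cokernels $\Coker(i_M)$ of this shape, applied with $\ol{Z}:=V$, "$M^\infty$"$:=D'$, and $(n,m):=(n,1)$ — then gives $P\simeq P_{\ol{X}'}\simeq P_{V\times\mathbb{A}^1}$ in $\ulMDM^\eff$. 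The modulus on $V\times\mathbb{A}^1$ pulled back this way is $n(V\times\{0\})+(D'_V\times\mathbb{A}^1)$ for a strict normal crossings divisor $D'_V$ on $V$, i.e.\ the pair $(\AA^1,n\{0\})\otimes(V,D'_V)$; by right-exactness of $\otimes$ and $\Ztr(\mathfrak{X}\otimes\mathfrak{Y})=\Ztr\mathfrak{X}\otimes\Ztr\mathfrak{Y}$, $P_{V\times\mathbb{A}^1}$ is $\Coker\bigl(\Ztr(\AA^1,n\{0\})\to\Ztr(\AA^1,\{0\})\bigr)\otimes\Ztr(V,D'_V)$, and since $-\otimes\Ztr(V,D'_V)$ is triangulated on $\ulMDM^\eff$ this vanishes after inverting $p$ as soon as $\ulM(\AA^1,n\{0\})\to\ulM(\AA^1,\{0\})$ is an isomorphism in $\ulMDM^\eff(k,\Zpu)$. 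So everything reduces to this last statement, for all $n\ge 1$.

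For the $\mathbb{A}^1$-case, cover $\PP^1$ by its two standard affine lines — $\ol{U}$ missing the support point of $n\{\infty\}$, and $\ol{V}$ containing it — an elementary (Zariski, hence Nisnevich) cover. The pulled-back modulus pairs are $(\AA^1,\emptyset)$ on $\ol{U}$, $(\AA^1,n\{0\})$ on $\ol{V}$ (identifying $\infty$ with $0$), and $(\mathbb{G}_m,\emptyset)$ on $\ol{U}\cap\ol{V}$, so (MV) yields a distinguished triangle
\[
\ulM(\mathbb{G}_m,\emptyset)\to\ulM(\AA^1,\emptyset)\oplus\ulM(\AA^1,n\{0\})\to\ulM(\PP^1,n\{\infty\})\xrightarrow{+}
\]
in $\ulMDM^\eff$, functorial in $n$ for the morphisms induced by $\operatorname{id}_{\PP^1}$. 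Comparing this triangle for a given $n\ge 1$ with the one for $n=1$: the map on left-hand terms is $\operatorname{id}_{\ulM(\mathbb{G}_m,\emptyset)}$, and the map on right-hand terms, $\ulM(\PP^1,n\{\infty\})\to\ulM(\PP^1,\{\infty\})$, is an isomorphism after inverting $p$ by Theorem~\ref{charap} (which identifies it with the canonical $\ulM(\bcube^{(n)})\to\ulM(\bcube)$). By the five lemma in $\ulMDM^\eff(k,\Zpu)$ the map on middle terms is then an isomorphism; being the block-diagonal sum of $\operatorname{id}_{\ulM(\AA^1,\emptyset)}$ with $\ulM(\AA^1,n\{0\})\to\ulM(\AA^1,\{0\})$, the latter is an isomorphism after inverting $p$. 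Feeding this back through the reductions of the previous two paragraphs proves the corollary.

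The main obstacle is the reduction of the second paragraph: one must confirm that Theorem~\ref{PEPEPE} and the standard triple construction apply to the cokernel presheaves here — which they do, essentially because these are exactly the cokernels $\Coker(i_M)$ in the statement of Theorem~\ref{PEPEPE} — and carry through the associated bookkeeping with étale charts and the fibre product $\ol{M}\times_{\mathbb{A}^m}(V\times\mathbb{A}^1)$. This is technical but formally parallel to the excision arguments already in the paper; the $\mathbb{A}^1$-computation and the monoidal manipulations are then routine given Theorem~\ref{charap}.
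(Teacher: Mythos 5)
Your proposal is correct and follows essentially the same route as the paper: induct over the components of $M^\infty$ one multiplicity at a time, use Mayer--Vietoris, Lemma~\ref{decompose} and \'etale excision (Theorem~\ref{PEPEPE}; the paper cites Proposition~\ref{Fetexcion}, but the content is the same) via the standard-triple diagram $(\Omega)$ to identify the cone with that of $\ulM(\AA^1,n\{0\})\to\ulM(\AA^1,\{0\})$ tensored with a smooth pair, and then invoke Theorem~\ref{charap}. The only deviation is cosmetic: you deduce the $\AA^1$-statement from Theorem~\ref{charap} by a Mayer--Vietoris/five-lemma comparison on the standard cover of $\PP^1$, whereas the paper identifies $(\AA^1,\{0\})\slash(\AA^1,n\{0\})$ with $(\PP^1,\{\infty\})\slash(\PP^1,n\{\infty\})$ directly by excision.
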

\begin{proof}
Set $M^\infty=\Sigma_{k=1}^{n} n_k V_k$ where $V_k$ are smooth Cartier divisor. We take $M^\infty_i := n_1 V_1 + \cdots n_i V_i + \Sigma_{k=i+1}^{n} V_k$, it is enough to prove $\ulM(\ol{M},M^\infty_{i})\otimes\Zpu \simeq \ulM(\ol{M},M^\infty_{i-1})\otimes\Zpu$. By Mayer-Vietoris sequence, we can replace $\ulM(\ol{M},M^\infty_{i})\otimes\Zpu$ by $\ulM(\ol{U},\ol{U}\cap M^\infty_{i})\otimes\Zpu$ where $\ol{U}$ has a local chart $q:\ol{U} \to \AA^m$ such that $\ol{U}\cap V_i=q^{-1}(\AA^{m-1}\times\{0\})$ and $\ol{U}\cap(M^\infty_{i}-n_iV_i)=q^{-1}(\{T_1^{d_1}.....T_j^{d_s}=0\})$ where $T_l$ are the coordinates of $\AA^m$. Replace $\ulM(\ol{M},M^\infty_{i})\otimes\Zpu$ by $\ulM(\ol{U},\ol{U}\cap M^\infty_{i})\otimes\Zpu$. In this case we have a diagram $(\Omega)$ used in the proof of the tame Gysin triangle. By Proposition \ref{Fetexcion} the cone of the natural morphisms $\ulM(\ol{M},M^\infty_{i})\otimes\Zpu \to \ulM(\ol{M},M^\infty_{i-1})\otimes\Zpu$ is isomorphic to $\ulM\Bigl((V_i,V_i^\infty) \otimes (\mbA,\{0\}) \Bigr) / \ulM\Bigl((V_i,V_i^\infty) \otimes (\mbA,n_i\{0\})\Bigr)\otimes \Zpu$ where $V_i^\infty=V_i\cdot_{\ol{M}}(n_1 V_1 + \cdots n_{i-1} V_{i-1} + \Sigma_{k=i+1}^{n} V_k)$, Proposition \ref{Fetexcion} and Theorem \ref{charap} claims $(\AA^1,\{0\})\slash (\AA^1,n_i\{0\}) \otimes \Zpu=0$ we win.
\end{proof}

By this corollary and Theorem \ref{redcase}, we get the following theorem.

\begin{thm}[{Corollary~\ref{pinvert}}]\label{ppinv}
If the base field $k$ has characteristic $p$ and admits log resolution of singularities, then there is an equivalence
\[
\omega_{\eff}[1/p]:\MDMeff[1/p] \isom \DM^{\eff}[1/p].
\]
\end{thm}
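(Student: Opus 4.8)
My plan is to deduce the statement from the fact that the \emph{right} adjoint $\ul{\omega}^{\eff}$ restricts to an equivalence $\DM^{\eff}[1/p] \xrightarrow{\sim} \ulMDM^{\eff}_{\proper}[1/p]$; its left adjoint $\omega_{\eff}[1/p]$ is then automatically a quasi-inverse, hence also an equivalence. To see that $\ul{\omega}^{\eff}$ lands in the proper part and is fully faithful, I would argue as follows. Since $k$ admits log resolution of singularities, every $X \in \Sm(k)$ has a smooth projective compactification $\ol{X}$ with $|\ol{X}\setminus X|$ a strict normal crossings divisor, and Theorem~\ref{redcase} identifies $\ul{\omega}^{\eff}\M(X)$ with the proper modulus motive $\ulM(\ol{X}, |\ol{X}\setminus X|_{\red})$. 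As the $\M(X)$ generate $\DM^{\eff}$, $\ul{\omega}^{\eff}$ is triangulated, and $\ulMDM^{\eff}_{\proper}$ is thick, it follows that $\ul{\omega}^{\eff}$ maps $\DM^{\eff}[1/p]$ into $\ulMDM^{\eff}_{\proper}[1/p]$. Moreover $\omega_{\eff}\ulM(\ol{X}, |\ol{X}\setminus X|_{\red}) = \M(\ol{X}\setminus|\ol{X}\setminus X|) = \M(X)$, so feeding the isomorphism of Theorem~\ref{redcase} into the triangle identities shows the counit $\omega_{\eff}\ul{\omega}^{\eff}\M(X) \to \M(X)$ is an isomorphism, and hence (again by thickness) the counit is an isomorphism on all of $\DM^{\eff}[1/p]$; equivalently $\ul{\omega}^{\eff}$ is fully faithful.

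Next I would prove essential surjectivity of $\ul{\omega}^{\eff}$. Let $\mathcal{E}\subseteq\ulMDM^{\eff}_{\proper}[1/p]$ denote its essential image, a thick subcategory. By the previous paragraph $\mathcal{E}$ contains $\ulM(\ol{M},M^{\infty})$ for every smooth projective $\ol{M}$ with $M^{\infty}$ reduced and $|M^{\infty}|$ strict normal crossings; by Corollary~\ref{Acharap} — the one place where the hypothesis that $k$ has characteristic $p$, together with the inversion of $p$, is essential — it then also contains $\ulM(\ol{M},M^{\infty})[1/p]$ for such a pair with arbitrary multiplicities in $M^{\infty}$. It remains to show that these objects generate $\ulMDM^{\eff}_{\proper}[1/p]$. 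Given an arbitrary proper modulus pair $(\ol{M},M^{\infty})$, I would take a projective log resolution $\pi\colon\ol{M}'\to\ol{M}$ with $\ol{M}'$ smooth and $\pi^{-1}|M^{\infty}|\cup\operatorname{Exc}(\pi)$ strict normal crossings, and combine the birational invariance of Remark~\ref{birational} with the smooth blow-up formula of \cite{KS19} — localizing by Mayer--Vietoris and factoring $\pi$ through blow-ups with smooth centres — to rewrite $\ulM(\ol{M},M^{\infty})$ in terms of $\ulM$'s of smooth strict normal crossings modulus pairs. This gives $\mathcal{E}=\ulMDM^{\eff}_{\proper}[1/p]$, which together with the previous paragraph completes the argument.

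The step I expect to be the main obstacle is the generation statement in the second paragraph. Reducing the multiplicities of the modulus is clean, being exactly Corollary~\ref{Acharap}; but reducing a \emph{singular} proper total space to smooth strict normal crossings models genuinely requires combining log resolution of singularities with the blow-up triangles available in $\ulMDM^{\eff}$ — and, should a direct factorization of $\pi$ into smooth blow-ups not be available, some abstract blow-up or proper descent input — so this reduction is where the real work of the proof lies.
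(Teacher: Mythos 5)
Your argument is, at heart, the paper's argument formulated dually: the paper checks that the unit $\mathrm{id}\to\ul{\omega}^{\eff}\ul{\omega}_{\eff}$ is an isomorphism on the compact generators $\ulM(\ol{M},M^{\infty})$ of $\ulMDM^{\eff}_{\proper}[1/p]$ (Theorem~\ref{redcase} for reduced $M^{\infty}$, Corollary~\ref{Acharap} for arbitrary multiplicities) and deduces the equivalence from compact generation, whereas you check the counit $\ul{\omega}_{\eff}\ul{\omega}^{\eff}\to\mathrm{id}$ on the $\mathbf{M}(X)$ and treat essential surjectivity separately; both are correct and use the same inputs. The one place you have misjudged the difficulty is the ``generation'' step you flag in your last paragraph as the main obstacle: it is in fact immediate, and needs neither the smooth blow-up formula of \cite{KS19}, nor any factorization of a log resolution through blow-ups with smooth centres, nor any abstract blow-up/proper descent input. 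Remark~\ref{birational} already applies verbatim to a log resolution $\pi\colon\ol{M}'\to\ol{M}$: $\pi$ is proper birational, one takes the modulus on $\ol{M}'$ to be the pulled-back effective Cartier divisor $\pi^{*}M^{\infty}$, and $\pi$ is an isomorphism away from $|M^{\infty}|$, so $\ulM(\ol{M},M^{\infty})\simeq\ulM(\ol{M}',\pi^{*}M^{\infty})$ already in $\ulMCor$, with $\ol{M}'$ smooth and $|\pi^{*}M^{\infty}|=\pi^{-1}|M^{\infty}|$ a strict normal crossings divisor. In other words, every compact generator of $\ulMDM^{\eff}_{\proper}$ is on the nose isomorphic to one of smooth SNC form --- this is exactly the paper's opening sentence in the proof (``any modulus pair is isomorphic to a modulus pair which has a smooth total space and strictly normal crossing divisor modulus''). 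With that observation the step you feared would be the bottleneck evaporates, and the rest of your outline goes through as written.
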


\begin{proof}
We omit $[1\slash p]$. Since we assume the base field $k$ admits log resolution of singularities, any modulus pair is isomorphic to a modulus pair which has a smooth total space and strictly normal crossing divisor modulus. Now $\MDMeff$ is compactly generated by the $\ulM\MpM$, \cite[Theorem 1(2)]{KMSY19b}, and both $\omega_\eff$ and $\omega^{\eff}$ commute with all sums (the latter because $\omega_\eff$ sends compact generators to compact objects), so it suffices to know that $\ulM\MpM \to \omega_\eff\omega^\eff \ulM\MpM$ is an isomorphism when $\ol{M}$ is smooth and proper and $M^\infty$ is a strict normal crossings divisor. If $M^\infty$ is reduced, Theorem \ref{redcase} implies the claim. By Corollary \ref{Acharap} its also true when $M^\infty$ is not reduced.
\end{proof}





%

\bibliography{Gysin}
\bibliographystyle{alpha}

%

%

\end{document}